\documentclass[11pt]{article}
\usepackage[left=2.54cm,top=2.54cm,right=2.54cm,bottom=2.54cm]{geometry}

\input{macros/preamble.tex}

\usepackage{rotating}
\usepackage{xcolor}
\usepackage{mathrsfs}

\hypersetup{colorlinks=true,linkcolor=blue,citecolor=blue,urlcolor=blue}
\hypersetup{pdftitle={Optimal Linear Functional Estimation for Proportionally High-Dimensional Linear Models},pdfauthor={Junwei Lu}}

\newif\ifshowrevision
\showrevisiontrue

\newcommand{\E}{\mathbb E}\newcommand{\Pp}{\mathbb P}
\newcommand{\R}{\mathbb R}\newcommand{\C}{\mathbb C}
\newcommand{\ii}{\mathrm i}\renewcommand{\dd}{\,\mathrm d}
\renewcommand{\tr}{\operatorname{tr}}
\newcommand{\dist}{\operatorname{dist}}
\renewcommand{\Var}{\operatorname{Var}}\renewcommand{\Cov}{\operatorname{Cov}}
\renewcommand{\diag}{\operatorname{diag}}\newcommand{\supp}{\operatorname{supp}}\newcommand{\Ddisk}{\mathbb D}
\newcommand{\sn}{\operatorname{sn}}\newcommand{\cn}{\operatorname{cn}}\newcommand{\dn}{\operatorname{dn}}
\newcommand{\op}{\mathrm{op}}\newcommand{\TV}{\mathrm{TV}}\newcommand{\KL}{\mathrm{KL}}
\renewcommand{\norm}[1]{\lVert#1\rVert}
\renewcommand{\ind}{\mathbf1}\newcommand{\Fell}{\mathbf F}\newcommand{\Kell}{\mathbf K}
\DeclareMathOperator{\clip}{clip}
\newcommand{\proofstep}[1]{\par\smallskip\noindent\textit{#1}\enspace}

\begin{document}
\title{\huge Optimal Linear Functional Estimation for Proportionally  High-Dimensional Linear Models}
\author{Junwei Lu\thanks{Department of Biostatistics, Harvard Chan School of Public Health, Boston, MA 02115; Email:
\texttt{junweilu@hsph.harvard.edu}.}}
\date{}
\maketitle

\begin{abstract}
            We study estimation of a linear functional in random-design
            linear models when $\gamma=p/n \in (1, \infty)$ under the dense regime where the design covariance is unknown, and neither the regression
            vector nor the precision matrix is assumed sparse. Suppose the spectrum of the design covariance belongs to $[\ell,L]$, we establish
            the minimax mean squared rate for the linear functional has the order $n^{-\alpha(\kappa,\gamma)}$, where  the exponent
            $\alpha(\kappa,\gamma)$ has an explicit elliptic-integral
            representation and $\alpha(\kappa,\gamma) < 1$ when both $\kappa > 1$ and $\gamma > 1$. We propose the Bocca estimator, which combines
            empirically calibrated resolvents through a complex contour integral to attain the optimal rate.
\end{abstract}


\section{Introduction}

Many regression problems concern a single coefficient or a prespecified
contrast rather than the entire regression vector. Examples include an
exposure coefficient after adjustment for many covariates and a contrast
among effects in a genomic study. Such targets can remain scientifically
meaningful when there is no convincing reason to assume that only a few
covariates affect the response. This distinction motivates inference with
dense regression coefficients
\citep{ZhuBradic2018a,BradicFanZhu2022,SankaranarayananEtAl2025}.
We study the linear model $Y=X\beta+\varepsilon$, with unknown design covariance $\Sigma = \Cov(X)$ and unknown noise variance $\sigma^2 = \Var(\varepsilon)$. The parameter of interest is the scalar functional
$\beta_v=\langle v,\beta\rangle$, where $v$ is a known deterministic unit
vector. Most of the time, we consider the canonical vector $v = e_j$ for some $j \in \{1, \ldots, p\}$ to estimate a single coefficient $\beta_j$. Our focus is the dense proportional regime in which $\gamma=p/n \in (1, \infty)$ and both the coefficient vector $\beta$ and the precision matrix $\Sigma^{-1}$ may be dense.

In this regime, recovering a low-dimensional functional and recovering
the full vector are fundamentally different problems. Even under
isotropic Gaussian design, the minimax $\ell_2$-norm rate over a
fixed-radius ball need not vanish under proportional asymptotics
\citep{Dicker2016}. Nevertheless, a fixed coordinate can be estimated
consistently. The main difficulty in this regime is that  the sample covariance is singular when $p>n$, and estimating the
inverse-covariance direction associated with the target introduces an
additional high-dimensional nuisance parameter. Therefore, the core question this paper addresses is what is the minimax rate of estimation of a fixed linear functional of $\beta$ under this dense proportional regime.

When the design covariance is known, the coefficients of the linear model can be estimated consistently via the method of moments \citep{BradicFanZhu2022}, and they need to assume a sparse precision matrix when it is not known. Debiasing methods are another important type of approaches.
\citet{BellecZhang2022} propose the degrees-of-freedom adjustment method which starts from a regularized regression estimator
and corrects its bias using a precise analysis of proportional asymptotics.
\citet{BellecZhang2023} extend debiasing to general convex penalties and
establish asymptotic normality for most normalized directions.
\citet{CelentanoWainwright2023} further develop debiasing for a population
mean under missingness and confidence intervals for regression
coefficients when both outcome and propensity estimators can be
inconsistent. From a Bayesian perspective, \citet{SaenzSur2025}
characterize fixed-dimensional posterior marginals under isotropic
Gaussian designs and strongly log-concave product priors. 

When the design covariance is unknown, 
the debiasing methods exploiting the sparsity of both the coefficient vector and the precision matrix are widely studied. The debiased Lasso methods 
\citep{ZhangZhang2014,vandeGeer2014} remove regularization bias through an
approximately orthogonal score. \citet{JavanmardMontanari2014} construct
an approximate inverse through constrained optimization, 
\citet{Buhlmann2013} considers correcting the projection bias of ridge regression.
These approaches impose different restrictions on regression sparsity,
precision-direction sparsity, and the design. In particular, the method
of \citet{JavanmardMontanari2014} does not require a sparse precision
matrix, although it requires a sufficiently sparse regression vector. The analysis is later sharpened
 for Gaussian designs  \citep{JavanmardMontanari2018}.
Similar approaches include post-double-selection inference
\citep{BelloniChernozhukovHansen2014}, decorrelated score tests
\citep{NingLiu2017}, and double machine learning with orthogonal
scores and cross-fitting \citep{ChernozhukovEtAl2018}.
Their $\sqrt{n}$-consistency results usually require suitable control of the nuisance
estimation errors, often through a product remainder. Orthogonality
reduces sensitivity to nuisance errors, but does not by itself make this
remainder negligible when both nuisance estimators have nonvanishing
errors under proportional asymptotics. \citet{ZhuBradic2018a,ZhuBradic2018b,BradicFanZhu2022} relaxed the sparsity assumption and develop tests that remain valid when $\beta$ is dense, if the precision direction associated with the loading vector $v$ is sparse.
\citet{CaiGuo2017} establish corresponding minimax confidence-interval
lengths and adaptation limits over sparse parameter spaces.

In the dense regime when both $\beta$ and $\Sigma^{-1}$ are dense, the source of covariance
information and the inferential target become decisive.
\citet{CelentanoMontanari2024} propose correlation adjusted debiasing,
which corrects bias caused by correlated errors in the outcome and
covariate regressions. Their analysis permits inaccurate nuisance
estimation, including overlapping samples. \citet{LiSur2026} develop spectrum-aware debiasing for
right-rotationally invariant designs and establish a spectral universality
extension.  
\citet{ChenLiuMukherjee2024} is the closest work to the setting of our paper. They use higher-order moments to approximate the inverse covariance and estimate the resulting moments with higher-order $U$-statistics. Their construction
gives coordinate consistency when $p\ge n$ without requiring a sparse
precision matrix, but does not determine the optimal statistical rate. They later generalize the method of moments to the linear model with affine constrain \citep{SankaranarayananEtAl2025} and the generalized linear model \citep{SawayaUematsuImaizumi2026}.

There are also a wide range of works studying the estimation of scalar functionals of linear model other than the linear functional.
 \citet{Dicker2014} develop moment estimators of variance
components, and \citet{VerzelenGassiat2018} study adaptive estimation of
signal-to-noise ratios with known or unknown covariance.
\citet{KongValiant2018} show how polynomial approximation and moment
estimation can recover prediction-related quantities using fewer
observations than features. The dependence on covariance information in
these problems motivates estimating a functional directly rather than
first reconstructing the regression vector. Their targets and lower-bound
regimes, however, differ from the fixed-coordinate proportional problem.
For another functional, \citet{McGrathEtAl2025} obtain $\sqrt{n}$-consistency
of an expected conditional covariance under isotropic independent-entry
designs, despite inconsistent ridge nuisance estimators. 

Despite the rich literature discussed above, it remains open \citep{MalekiEtAl2026} what is the optimal statistical rate of estimating a fixed linear functional of the proportionally high-dimensional linear models when both the regression vector and the precision matrix are dense. 
This paper determines the minimax rate for this problem and characterizes the optimal estimator via the proportion $\gamma$ and the design covariance matrix's condition number $\kappa$.
Specifically, let $\ell$ and $L$ be fixed positive lower and upper
 bounds of the covariance spectrum, and write $\kappa=L/\ell\ge 1$.
We consider the design has the representation $X=Z\Sigma^{1/2}$, where the entries of
$Z$ are independent, identically distributed  with standardized variance and bounded moments. The coefficient vector $\beta$ has a bounded $\ell_2$-norm, and the independent Gaussian noise has an unknown
bounded variance. Let $\mathcal P(n,p,\gamma,\kappa)$ be such model family, specified in
Section~\ref{sec:setup}, we prove
\begin{equation}\label{eq:intro-rate}
 \inf_{\widetilde\beta_v}
 \sup_{P\in\mathcal P(n,p,\gamma,\kappa)}
 \E_P(\widetilde\beta_v-\beta_v)^2
 \asymp n^{-\alpha(\kappa,\gamma)}.
\end{equation}
The exponent $\alpha(\kappa,\gamma)\in(0,1]$ has a closed-form expression \eqref{eq:alpha-exponent} in terms of
elliptic integrals. In specific, we have $\alpha(1,\gamma) = 1$ when $\gamma >1$ and $\alpha(\kappa,\gamma) \in (0,1)$ when both $\kappa > 1$ and $\gamma > 1$. Therefore, we can have the $\sqrt{n}$-rate when design covariance is diagonal, which is known by, e.g., \cite{BradicFanZhu2022}. However, the $\sqrt{n}$-rate is not achievable under the general dense proportionally high-dimensional linear models, especially when $\kappa > 1$.
When both $\kappa$ and $\gamma$ diverge, we show that
$
 \alpha(\kappa,\gamma)
 \sim {4}/(\sqrt\kappa\,\log\gamma)
$; see Remark~\ref{rem:exponent-asymptotics} for details. 
This asymptotic further shows the dimension-sample ratio $\gamma$ only affects the rate logarithmically, while the covariance condition number $\kappa$ affects the rate exponent in the order of $1/\sqrt{\kappa}$.

We propose the Bilinear Operator Calibration via Contour Aggregation (\emph{Bocca}) estimator, which combines empirically calibrated resolvents through a complex contour integral.
The
calibration is motivated by the companion Stieltjes equation \citep{SilversteinChoi1995} and
anisotropic local law \citep{KnowlesYin2017} for
sample covariance matrices.
A contour integral then combines the estimated resolvent field with an
analytic filter. We show the rate of Bocca estimator is characterized by the bias terms determined by the approximation error of the filter on
the covariance spectrum, while its growth on the observation contour
controls the variance term. We identify a reflected
elliptic-function as the filter which balances the bias-variance trade-off and  attains the optimal rate.

\noindent {\bf Paper organization.} Section~\ref{sec:setup} specifies the statistical models setup and notation related to the problem. Section~\ref{sec:estimator} presents the Bocca estimator and its rate. Section~\ref{sec:lower} establishes the
 minimax lower bound.

\section{Problem setup}\label{sec:setup}
Fix $0 < \ell < L$,  $M, B_0, \bar\sigma>0$, and set $\kappa=L/\ell$. For integers $p>n\ge1$ and the compatible ratio $\gamma=p/n$, let $\mathcal P(n,p,\gamma,\kappa)$ be the family of joint laws of $(X,Y)\in\R^{n\times p}\times\R^n$ generated by
\begin{equation}\label{eq:model-family}
 X=Z\Sigma^{1/2}, \quad Y=X\beta+\varepsilon, \quad
 \ell I_p\preceq\Sigma\preceq LI_p,
 \quad \norm\beta_2\le B_0,
 \text{ and } 0\le\sigma\le\bar\sigma.
\end{equation}
Here $\Sigma$ and $\beta$ are deterministic parameters, the entries of $Z$ are independent with an identical distribution satisfying $\E Z_{11}=0$, $\E Z_{11}^2=1$, and $\E|Z_{11}|^r\le M$ for every integer $r\ge3$, and $\varepsilon\sim N(0,\sigma^2I_n)$ is independent of $Z$. The constants $L,B_0,\bar\sigma,M$ are held fixed and are suppressed from the family notation. The covariance matrix $\Sigma$, regression coefficients $\beta$, and noise variance $\sigma^2$ are all unknown.
Define $\mathcal G(n,p,\gamma,\kappa)$ as the subclass in which $Z_{11}\sim N(0,1)$. Consider a sufficiently large $M$, so that $\mathcal G(n,p,\gamma,\kappa)\subseteq\mathcal P(n,p,\gamma,\kappa)$. We will show the minimax lower bound is attained in the Gaussian design family $\mathcal G(n,p,\gamma,\kappa)$.
All rate statements in this paper concern sequences of integers $p=p(n)$ for which
\begin{equation}\label{eq:aspect-regime}
 n\rightarrow\infty \text{ and }
 1<\liminf_{n\to\infty}\frac{p(n)}n
 \le\limsup_{n\to\infty}\frac{p(n)}n<\infty.
\end{equation}
The symbol $\gamma$ always denotes the current ratio $p/n$. No convergence of $p/n$ to a particular number is required. 
The number $\kappa=L/\ell$ specifies the eigenvalue bounds of the family; it need not be the condition number of each covariance matrix. As the diagonal covariance matrix case has been well studied \citep{BradicFanZhu2022}, in the rest of the paper, we focus on the general dense proportionally high-dimensional linear models when $\kappa > 1$.

Our target is $\beta_v=\langle v,\beta\rangle$ for a known deterministic vector $v\in\R^p$ with $\norm v_2=1$; a nonunit loading is handled by rescaling. Write $\widehat\Sigma=X^\top X/n$ and $\widehat S=XX^\top/n$. The trace is unnormalized. We use $a\lesssim b$ when $a\le Cb$, with $C$ depending only on the fixed model constants and ratio bounds, and $a\asymp b$ for both inequalities.

\subsection{Elliptic-integral notation}
The incomplete elliptic integral of the first kind is
\begin{equation}\label{eq:Fell}
 \Fell(\phi\mid m)=\int_0^\phi(1-m\sin^2 t)^{-1/2}\dd t,
 \text{ for some }0<m<1.
\end{equation}
Its complete value is $\Kell(m)=\Fell(\pi/2|m)$.  Also $\cosh z=(e^z+e^{-z})/2$ and $\sinh z=(e^z-e^{-z})/2$.
For the exponent, define
\begin{equation}\label{eq:alpha-exponent}
 \alpha(\kappa,\gamma)
 =1-\frac{\Fell(\arcsin(\xi/\rho)\mid\rho^2)}{\Kell(\rho^2)}, \text{ where $\xi=\frac{\sqrt\gamma-1}{\sqrt\gamma+1}$ and $\rho=\frac{\sqrt\gamma-1+\kappa^{-1}}{\sqrt\gamma+1-\kappa^{-1}}$}.
\end{equation}
Here $0<\xi<\rho<1$, so $0<\alpha(\kappa,\gamma)<1$. 

\section{The Bocca estimator}\label{sec:estimator}

For the eigendecomposition $\Sigma=\sum_{j=1}^p\lambda_jv_jv_j^\top$, define
$
 \nu=\sum_{j=1}^p(v^\top v_j)(v_j^\top\beta)\delta_{\lambda_j}.
$
Then the parameter of interest $\beta_v=\int\nu(\dd\lambda)$ and 
\begin{equation}\label{eq:population-field}
      F(\mu)=v^\top\Sigma(\Sigma-\mu I_p)^{-1}\beta
            =\int_{[\ell,L]}\frac{\lambda}{\lambda-\mu}\nu(\dd\lambda).
     \end{equation}
     Given some holomorphic function $g$ with $g(0)=1$ and some contour $\Gamma$ enclosing $[\ell,L]\cup \{0\}$, we consider the following filter
     \begin{align*}
           -\frac1{2\pi\ii}\oint_{\Gamma}\frac{1-g(z)}zF(z)\dd z 
           &=\int\left[-\frac1{2\pi\ii}\oint_{\Gamma}
            \frac{1-g(z)}z\frac{\lambda}{\lambda-z}\dd z\right]\nu(\dd\lambda)=\int_{[\ell,L]}(1-g(\lambda))\nu(\dd\lambda).
     \end{align*}
     Throughout the paper, we always orient the contour integration counterclockwise. 
     As our parameter of interest is $\beta_v = \int1\,\nu(\dd\lambda)$, to approximate it using the right hand side of the above equation, we need to find a $g$ being small enough on the slit $[\ell,L]$, while $g(0)=1$ removes the extraneous pole at zero. This is the key insight of our Bocca estimator to be proposed formally later.

We first construct an estimator for $F(\mu)$.
Consider the Stieltjes transform
     \[
      m(z) = \int_{\R} \frac{1}{x-z} \varrho(\dd x), \text{ where } \varrho = \frac{1}{p}\sum_{j=1}^p \delta_{\lambda_j}.
     \]
It is characterized in the upper half-plane by the equation
\begin{equation}\label{eq:MP}
 z=-\frac1{m(z)}+\frac1n\tr\bigl(\Sigma(I_p+m(z)\Sigma)^{-1}\bigr);
\end{equation}
see \citet[Eq~(1.3)--(1.4)]{SilversteinChoi1995}. We can utilize it to inverse $m(z)$ by
\begin{equation}\label{eq:zeta}
 \zeta(\mu)=\mu-\frac\mu n\tr\bigl(\Sigma(\Sigma-\mu I_p)^{-1}\bigr).
\end{equation}
The following proposition, whose proof is given in Appendix~\ref{app:calibration}, shows when we can solve the inverse of $m(z)$ using $\zeta(\mu)$ on a certain region of the complex plane.

\begin{proposition}\label{prop:calibration}
For every $\mu\in\C$ with $|\mu-L|>\sqrt\gamma L$, the function $m$ is analytic at $\zeta(\mu)$ and satisfies $m(\zeta(\mu))=-1/\mu$.
\end{proposition}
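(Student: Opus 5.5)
The plan is to recognize $\zeta$ as the Silverstein--Choi inverse map written in the variable $\mu=-1/m$. I will then use uniqueness of the solution of \eqref{eq:MP} in the upper half-plane for non-real $\mu$, and the Silverstein--Choi characterization of the complement of the support for real $\mu$. Throughout, $m$ is understood as the function characterized by \eqref{eq:MP}, i.e.\ the Stieltjes transform of the companion limiting spectral law.

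\emph{Change of variables.} Substituting $m=-1/\mu$ in the right-hand side of \eqref{eq:MP} gives $\Sigma(I_p-\Sigma/\mu)^{-1}=-\mu\Sigma(\Sigma-\mu I_p)^{-1}$. Hence that right-hand side equals exactly $\zeta(\mu)$ in \eqref{eq:zeta}. So the identity $z=\zeta(\mu)$, $m=-1/\mu$ solves \eqref{eq:MP} algebraically, and the real content of the statement is that this algebraic solution is the correct branch, namely the Stieltjes transform itself.

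Using $\mu\lambda/(\lambda-\mu)=-\lambda+\lambda^2/(\lambda-\mu)$, we can rewrite
\[
\zeta(\mu)=\mu+\frac{\tr\Sigma}n-\frac1n\sum_{j=1}^p\frac{\lambda_j^2}{\lambda_j-\mu}.
\]
This gives
\[
\operatorname{Im}\zeta(\mu)=\operatorname{Im}\mu\Bigl(1-\frac1n\sum_j\frac{\lambda_j^2}{|\lambda_j-\mu|^2}\Bigr),
\qquad
\zeta'(\mu)=1-\frac1n\sum_j\frac{\lambda_j^2}{(\lambda_j-\mu)^2}.
\]

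\emph{Key geometric step.} I will show that on $D=\{|\mu-L|>\sqrt\gamma L\}$ every $\lambda_j\in[\ell,L]$ satisfies $|\mu-\lambda_j|>\sqrt\gamma\lambda_j$. This holds because the disk $\{|\mu-\lambda|\le\sqrt\gamma\lambda\}$ lies inside $\{|\mu-L|\le\sqrt\gamma L\}$: its farthest point from $L$ is at distance $(L-\lambda)+\sqrt\gamma\lambda\le\sqrt\gamma L$, which uses $\gamma>1$. Consequently
\[
\frac1n\sum_j\frac{\lambda_j^2}{|\lambda_j-\mu|^2}<\frac pn\cdot\frac1\gamma=1.
\]
Note also that $D$ contains no $\lambda_j$ and not $0$, so $\zeta$ is holomorphic on $D$.

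\emph{Non-real $\mu$.} If $\mu\in D$ with $\operatorname{Im}\mu>0$, the bound above gives $\operatorname{Im}\zeta(\mu)>0$ and $\operatorname{Im}(-1/\mu)>0$. So $(\zeta(\mu),-1/\mu)$ is a pair $(z,m)\in\C^+\times\C^+$ satisfying \eqref{eq:MP}. By the uniqueness part of Silverstein--Choi (for $z\in\C^+$ there is a unique solution of \eqref{eq:MP} in $\C^+$), we conclude $m(\zeta(\mu))=-1/\mu$, and $m$ is analytic there since $\zeta(\mu)\in\C^+$. The case $\operatorname{Im}\mu<0$ follows by conjugation symmetry, $m(\bar z)=\overline{m(z)}$ and $\zeta(\bar\mu)=\overline{\zeta(\mu)}$.

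\emph{Real $\mu$ (main obstacle).} This is where one must rule out $\zeta(\mu)$ landing on the support. For real $\mu\in D$ (so $\mu>(1+\sqrt\gamma)L$ or $\mu<(1-\sqrt\gamma)L<0$), the same bound gives
\[
\zeta'(\mu)\ge 1-\frac1n\sum_j\frac{\lambda_j^2}{|\lambda_j-\mu|^2}>0.
\]
Since $dm/d\mu=1/\mu^2>0$ along $m=-1/\mu$, the Silverstein--Choi inverse function $z(m)$ satisfies $z'(m)>0$ at the real point $m=-1/\mu\notin\{0\}\cup\{-1/\lambda_j\}$. By their characterization of the complement of the support (Theorems~4.1--4.2: a real $x$ lies outside the support iff $x=z(m)$ for a real $m$ with $z'(m)>0$, in which case $m(x)$ equals that $m$), $\zeta(\mu)$ lies outside the support, $m$ is analytic there, and $m(\zeta(\mu))=-1/\mu$. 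Alternatively, the identity on the real points follows by continuity from the upper half-plane case, once analyticity at $\zeta(\mu)$ is established.
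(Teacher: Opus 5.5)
Your proposal is correct and follows the paper's proof essentially step for step. It uses the same triangle-inequality bound giving $\frac1n\sum_j\lambda_j^2/|\lambda_j-\mu|^2<1$, and uniqueness of the $\C^+$ solution of \eqref{eq:MP} for $\operatorname{Im}\mu>0$, with conjugation for the lower half-plane. For real $\mu$ it applies Silverstein--Choi's support characterization (their Theorem~4.2) with $z'(m)=\mu^2\zeta'(\mu)>0$ at $m=-1/\mu$, exactly as the paper does, and your explicit convention that $m$ denotes the companion Stieltjes transform solving \eqref{eq:MP} is the reading the argument requires.
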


For a deterministic $\delta>0$, set
\begin{equation*}
      R_\delta=\sqrt\gamma(1+\delta)L, \,
       \Gamma_\delta=\{z\in\C:|z-L|=R_\delta\}, \text{ and } \Ddisk_\delta=\{z\in\C:|z-L|<R_\delta\}.
     \end{equation*}
The circle encloses $[\ell,L]$ and zero, and Proposition~\ref{prop:calibration} ensures $m(\zeta(\mu))=-1/\mu$ for all $\mu$ on the contour $\Gamma_{\delta}$, where we can define an estimator for $F(\mu)$.

We construct an estimator for $F(\mu)$ by using the anisotropic local law for
sample covariance matrices 
\citep{KnowlesYin2017}.  They showed that
\[
 \frac1nX^\top(\widehat S-zI_n)^{-1}X
 \approx m(z)\Sigma(I_p+m(z)\Sigma)^{-1},
\]
Therefore, as $m(\zeta(\mu))=-1/\mu$, we have $F(\mu) \approx n^{-1} (Xv)^\top(\widehat S-\zeta(\mu)I_n)^{-1}Y$. Next is to estimate $\zeta(\mu)$.
Define $\widehat m(z)=n^{-1}\tr(\widehat S-zI_n)^{-1}$. We next construct an estimator for $\zeta(\mu)$ by inverting $\widehat m(z)$. Whenever it exists, define $\widehat\zeta:\Gamma_\delta\to\C$ as the
continuous function satisfying 
\begin{equation}\label{eq:empirical-calibration}
      \widehat{m}(\widehat{\zeta}(\mu))
 =-1/\mu, \widehat\zeta(\mu)\notin [\lambda_{\min}(\widehat S), \lambda_{\max}(\widehat S)], \widehat\zeta(L-R_\delta)<\lambda_{\min}(\widehat S)
 \text{ and } 
 \widehat m'(\widehat\zeta(\mu))\ne0, 
\end{equation}
for all $\mu\in\Gamma_\delta$.
The derivative condition means that each selected root is simple. We set $\widehat{\zeta}(\mu)=0$ if no such function exists.
The following proposition shows that such a function exists with high probability. Its proof is deferred to Appendix~\ref{app:pf:prop:empirical-inverse}.

\begin{proposition}\label{prop:empirical-inverse}
Let $1/\log n\le\delta\le2/\log n$. For every fixed $D>0$ and all sufficiently large $n$, with probability at least $1-Cn^{-D}$ there exists a unique continuous function $\widehat\zeta:\Gamma_\delta\to\C$ satisfying \eqref{eq:empirical-calibration} for all $\mu\in\Gamma_\delta$. 
\end{proposition}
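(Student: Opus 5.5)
We will obtain $\widehat\zeta$ by perturbing the deterministic map $\zeta$ of \eqref{eq:zeta}, running a Rouché argument for the analytic equation $\widehat m(w)=-1/\mu$ on a small neighbourhood of each point $\zeta(\mu)$, $\mu\in\Gamma_\delta$. Three deterministic facts come first, all derived from Proposition~\ref{prop:calibration} and \eqref{eq:MP}.

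\emph{First}, $\zeta(\Gamma_\delta)$ lies outside the support of the limiting spectral law of $\widehat S$, at distance $\gtrsim\delta^2\gtrsim(\log n)^{-2}$ from it. The boundary circle $|\mu-L|=\sqrt\gamma L$ is exactly where the calibration can degenerate: there $\zeta'(\mu)$ may vanish, or $\zeta(\mu)$ may reach the support edge. Moving the radius outward by a factor $1+\delta$ produces a quadratic gap, since $\zeta'$ vanishes to first order.

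\emph{Second}, $m'(\zeta(\mu))\neq0$, with $|m'(\zeta(\mu))|\gtrsim\delta$. Differentiating $m(\zeta(\mu))=-1/\mu$ gives $m'(\zeta)\zeta'(\mu)=\mu^{-2}$, so a lower bound on $|\mu^{-2}/\zeta'(\mu)|$ is all we need. This follows from $|\zeta'|\lesssim1$ on the contour, because $\mu$ stays away from $[\ell,L]$.

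\emph{Third}, $\zeta(L-R_\delta)$ is real and lies to the left of the support. Since $L-R_\delta<0$, the trace term in \eqref{eq:zeta} gives $\zeta(L-R_\delta)<0<\lambda_{\min}(\widehat S)$, with $\lambda_{\min}(\widehat S)$ bounded away from $0$ because $p>n$.

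Next comes the random input. By the averaged local law / rigidity for sample covariance matrices \citep{KnowlesYin2017}, with probability $1-Cn^{-D}$ two things hold.
\begin{itemize}
\item All eigenvalues of $\widehat S$ lie within $n^{-2/3+\epsilon}$ of the limiting support.
\item Uniformly over $w$ at distance $\ge(\log n)^{-3}$ from that support, and on a fine net extended by Lipschitz continuity, we have $|\widehat m(w)-m(w)|+|\widehat m'(w)-m'(w)|\le n^{-1+\epsilon}$ (up to polylog factors).
\end{itemize}
On this event, $\widehat m$ is analytic in a neighbourhood $U$ of $\zeta(\Gamma_\delta)$. Fix $\mu$ and a disc $B=B(\zeta(\mu),r)$ with $r=c\delta^{3}$. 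We have $|m(w)+1/\mu|\ge \tfrac12|m'(\zeta)|r\gtrsim\delta^4$ on $\partial B$, which uses the second-derivative bound $|m''|\lesssim\delta^{-6}$ in $U$ from the distance to the support. This dominates $|\widehat m-m|\le n^{-1+\epsilon}$, so Rouché gives exactly one root $\widehat\zeta(\mu)\in B$, and it is simple.

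The properties required by \eqref{eq:empirical-calibration} then follow.
\begin{itemize}
\item \emph{Continuity} in $\mu$ comes from the implicit function theorem, since $\widehat m'(\widehat\zeta)\neq0$, which in turn follows from $|\widehat m'-m'|\ll\delta$.
\item \emph{Non-membership} $\widehat\zeta(\mu)\notin[\lambda_{\min},\lambda_{\max}]$ holds because $B$ is disjoint from a neighbourhood of the spectrum.
\item \emph{The condition at $L-R_\delta$} holds because $\widehat\zeta(L-R_\delta)$ is real (by conjugation symmetry and uniqueness in $B$) and lies near $\zeta(L-R_\delta)<\lambda_{\min}(\widehat S)$.
\end{itemize}
Existence of the function is therefore proved.

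Uniqueness is the main obstacle, since $\widehat m(w)=-1/\mu$ has many roots, including roots between eigenvalues. The constraints must pin down the branch.
\begin{itemize}
\item Real $w$ inside the spectral hull are excluded outright.
\item For nonreal $w$, $\operatorname{Im}\widehat m(w)$ has the sign of $\operatorname{Im}w$. A root thus satisfies $\operatorname{sign}\operatorname{Im}w=-\operatorname{sign}\operatorname{Im}(1/\mu)=\operatorname{sign}\operatorname{Im}\mu$. Moreover $\widehat m$ is injective on each half-plane outside the real hull of the spectrum, as a Stieltjes transform with Herglotz property. I plan to verify this via the argument principle on the half-disc boundary, counting the winding of $\widehat m$.
\item Real roots outside the hull are unique on each side by monotonicity of $\widehat m$ there. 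The left-of-spectrum normalisation at $L-R_\delta$, together with continuity along the connected contour, then forbids jumping to the right-hand real root.
\end{itemize}
Hence any continuous solution coincides with the constructed one.

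If injectivity on half-planes fails in general, the fallback is a continuity/connectedness argument. The set of $\mu$ where two continuous solutions agree is open and closed, by simplicity of roots, and it contains $L-R_\delta$ by real monotonicity. It is therefore all of $\Gamma_\delta$.
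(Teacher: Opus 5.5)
Your architecture matches the paper's: Rouch\'e around each deterministic root $\zeta(\mu)$, implicit-function continuation for continuity, and conjugation symmetry to make the root at $L-R_\delta$ real (this is Lemma~\ref{lem:empirical-control}), followed by an open--closed argument on the connected circle. However, two steps fail as written.

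\textbf{Uniqueness.} Your primary route is false: $\widehat m$ is not injective on a half-plane. For $c\in\C\setminus\R$, clearing denominators turns $\widehat m(w)=c$ into a polynomial equation of degree $n$ in $w$. For distinct sample eigenvalues none of its roots is real, so by the sign property you quote all $n$ of them lie in the same half-plane. Besides the root near $\zeta(\mu)$, the others sit just off the real axis near the sample spectrum. The constraint $\widehat\zeta(\mu)\notin[\lambda_{\min}(\widehat S),\lambda_{\max}(\widehat S)]$ concerns real points only and does not exclude them, so no argument-principle count will give injectivity. Your fallback is exactly the paper's proof and is correct. The agreement set of two selections is closed by continuity. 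It is open because $\widehat m'\neq0$ at a common value makes $\widehat m$ locally injective there. It contains $L-R_\delta$ because $\widehat m$ increases strictly from $0$ to $+\infty$ on $(-\infty,\lambda_{\min}(\widehat S))$ while $-1/(L-R_\delta)>0$. State uniqueness this way and drop the injectivity claim.

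\textbf{The normalization at $L-R_\delta$.} Your third fact, $\zeta(L-R_\delta)<0$, is false in general. For $\Sigma=LI_p$, which is in the family, $\mu_0=L-R_\delta$ gives
\[
\zeta(\mu_0)=\mu_0\Bigl(1-\frac{\sqrt\gamma}{1+\delta}\Bigr)=L(1-\sqrt\gamma)^2-\frac{L\sqrt\gamma\,\delta^2}{1+\delta}>0,
\]
which is only $\asymp\delta^2$ below the lower edge $L(1-\sqrt\gamma)^2$ of the limiting spectrum of $\widehat S$. So ``$\zeta(\mu_0)<0<\lambda_{\min}(\widehat S)$'' cannot deliver $\widehat\zeta(L-R_\delta)<\lambda_{\min}(\widehat S)$. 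The argument the paper uses runs as follows.
\begin{enumerate}
\item Follow the real map from $\mu=-\infty$, where $\zeta(\mu)\to-\infty$, up to $\mu_0$. Along this path $\zeta'>0$, and by Proposition~\ref{prop:calibration} $\zeta(\mu)$ never meets the support. Hence $\zeta(\mu_0)$ lies to the left of the entire support, at distance $\ge c\delta^2$ by your first fact.
\item Combine this with confinement of $\lambda_{\min}(\widehat S)$ to within $o(\delta^2)$ of the lower edge and a Rouch\'e radius $r\ll\delta^2$.
\end{enumerate}
Your random input contains these ingredients, but the argument must be made this way; positivity of $\lambda_{\min}(\widehat S)$ is irrelevant.

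Two smaller points.
\begin{itemize}
\item With your stated bounds $|m'(\zeta(\mu))|\gtrsim\delta$ and $|m''|\lesssim\delta^{-6}$, the radius $r=c\delta^3$ does not close the Taylor estimate. The remainder $\tfrac12\delta^{-6}r^2\asymp c^2$ is not below $|m'|r\asymp\delta^4$. Take $r=c\delta^7$, or use $|m'(\zeta(\mu))|\ge c$, which is what $|\zeta'|\le2$ actually gives; any fixed power of $\delta$ beats an $n^{-1+\epsilon}$ perturbation.
\item ``All eigenvalues within $n^{-2/3+\epsilon}$ of the support'' is more than you need. For unrestricted $\Sigma$, whose internal edges may be irregular, it is also not covered by the Knowles--Yin rigidity results. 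You only use confinement near the two outer edges at precision $o(\delta^2)$, plus a polynomially small error $|\widehat m-m|$ on a $\delta^{b}$-neighbourhood of $\zeta(\Gamma_\delta)$. The paper obtains both without edge-regularity assumptions, at the weaker rate $n^{-3/4}\delta^{-c}$.
\end{itemize}
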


We discuss the concrete method to compute $\widehat\zeta(\mu)$ in Section~\ref{sec:computation}.
As the above proposition shows that $\widehat\zeta(\mu)$ is well-defined with high probability, we can define the estimator for $F(\mu)$ as
\begin{equation}\label{eq:empirical-field}
      \widehat F(\mu)=\frac1n(Xv)^\top(\widehat S-\widehat \zeta(\mu)I_n)^{-1}Y.
\end{equation}
If no such function exists, set $\widehat F$ identically equal to zero.

For every deterministic function $g$, we define the Bilinear Operator Calibration via Contour Aggregation (\emph{Bocca}) estimator as
\begin{equation}\label{eq:estimator}
 \widehat\beta_v(g)=\clip_{[-B_0,B_0]}\operatorname{Re}
 \left[-\frac1{2\pi\ii}\oint_{\Gamma_\delta}
 \frac{1-g(z)}z\widehat F(z)\,\dd z\right].
\end{equation}
where $\clip_{[-B_0,B_0]} A = \min\{\max\{A,B_0\},-B_0\}$ is to ensure the estimator belongs to the model family that $\|\beta\|_2 \le B_0$.

\subsection{Bias--variance trade-off}\label{sec:bias-var}

Now we discuss how the function $g$ determines the bias-variance trade-off of the Bocca estimator and how to construct an ideal $g$ to achieve the optimal rate. To construct such $g$, 
      we start with the harmonic potential $u_\delta$ satisfying
      \begin{equation}\label{eq:potential}
       \Delta u_\delta=0\ \text{in }\Ddisk_\delta\setminus[\ell,L],
       \qquad u_\delta=0\ \text{on }[\ell,L],\qquad u_\delta=1\ \text{on }\Gamma_\delta.
      \end{equation}
      Define $\alpha_\delta=(u_\delta(0)-\delta)/(1-\delta)$. We can prove that (see Lemma~\ref{lem:geometry} in the Appendix for details)
      \begin{equation}\label{eq:alpha-delta}
       \alpha_\delta  = \frac{u_\delta(0)-\delta}{1-\delta}=
       \frac{\alpha(\kappa,\gamma(1+\delta)^2)-\delta}{1-\delta}
       =\alpha(\kappa,\gamma)+O(\delta).
      \end{equation}
      Therefore, the key insight is that the optimal exponent $\alpha(\kappa,\gamma)$ for the estimation rate of $\beta_v$ is determined by $u_0(0)$, the harmonic potential at zero.

      We can actually have a closed form solution to $u_{\delta}$. Put
      $
       k=\frac{R_\delta-L+\ell}{R_\delta+L-\ell}$ and
       $T_\delta(z)=\frac{R_\delta-L+z}{R_\delta+L-z}.
      $
      For $z\in\Ddisk_\delta\setminus[\ell,L]$, define $b_\delta(z)$ as
      \begin{equation}\label{eq:potential-amplitude}
       b_\delta(z)=
       \frac{2\operatorname{Re}T_\delta(z)}
       {\sqrt{
       \bigl(1+|T_\delta(z)|^2+|1-T_\delta(z)^2|\bigr)
       \bigl(k^2+|T_\delta(z)|^2+|k^2-T_\delta(z)^2|\bigr)
       }}.
      \end{equation}
      Then $0<b_\delta(z)<1$, and
      \begin{equation}\label{eq:explicit-potential}
       u_\delta(z)
       =
       1-\frac{\Fell(\arcsin b_\delta(z)\mid k^2)}{\Kell(k^2)} \text{ and } u_\delta(0)
       =
       1-\frac{
       \Fell\left(
       \arcsin\frac{R_\delta-L}{k(R_\delta+L)}
       \,\middle|\,k^2
       \right)}
       {\Kell(k^2)}.
      \end{equation}
      
      Let $\widetilde u_\delta$ be a local harmonic conjugate of
      $u_\delta$, namely $u_\delta + \ii \widetilde u_\delta$ is holomorphic, normalized by $\widetilde u_\delta(0)=0$, and define the complexification of $u_\delta$ as
      \begin{equation}\label{eq:complex-potential}
       q_\delta(z)
       =
       \frac{\pi\Kell(k^2)}{\Kell(1-k^2)}
       \bigl(u_\delta(z)+\ii\widetilde u_\delta(z)\bigr).
      \end{equation}
      The above results are derived from the slit-disk form of the classical elliptic rectangle construction (see \citet[Sections~22.3, 22.12 and 22.35]{WhittakerWatson1920} and Section~\ref{app:pf:lem:geometry} for details).

      The follow theorem with its proof deferred to Appendix~\ref{app:pf:thm:generic} shows the  statistical rate of the generic Bocca estimator.

\begin{theorem}\label{thm:generic}
Let $1/\log n\le\delta\le2/\log n$. Suppose $g$ is deterministic, holomorphic on an open neighborhood of the closure of $\Ddisk_\delta$, and $g(0)=1$. Define $\Lambda_\delta=\{z\in\Ddisk_\delta\setminus[\ell,L]:u_\delta(z)=\delta\}$ and
\begin{equation}\label{eq:B-V}
 B(g)=\sup_{z\in\Lambda_\delta}|g(z)|^2,
 \qquad V(g)=\sup_{z\in\Gamma_\delta}|g(z)|^2.
\end{equation}
Then, for all sufficiently large $n$, we have
\begin{equation}\label{eq:generic-rate}
 \sup_{P\in\mathcal P(n,p,\gamma,\kappa)}\E_P(\widehat\beta_v(g)-\beta_v)^2
 \lesssim B(g)+\frac{V(g)}n.
\end{equation}
\end{theorem}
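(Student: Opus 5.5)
The plan is to split the error of the unclipped contour functional into a deterministic bias and a stochastic fluctuation, and then move the contour to control each piece separately. Let $E$ be the event of Proposition~\ref{prop:empirical-inverse}, on which $\widehat\zeta$ exists and is continuous on $\Gamma_\delta$. Off $E$ the clipped estimator and $\beta_v$ are both bounded by $B_0$, so this event contributes at most $4B_0^2Cn^{-D}\lesssim n^{-1}$. On $E$ the clipping is a contraction toward $[-B_0,B_0]\ni\beta_v$, so it can be dropped. By the identity displayed before Proposition~\ref{prop:calibration},
\[
 -\frac1{2\pi\ii}\oint_{\Gamma_\delta}\frac{1-g(z)}z\widehat F(z)\dd z-\beta_v
 =-\int g(\lambda)\,\nu(\dd\lambda)
 -\frac1{2\pi\ii}\oint_{\Gamma_\delta}\frac{1-g(z)}z\bigl(\widehat F-F\bigr)(z)\dd z .
\]

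For the bias term, $\bigl|\int g\,\dd\nu\bigr|\le \sup_{[\ell,L]}|g|\cdot\sum_j|v^\top v_j||v_j^\top\beta|\le B_0\sup_{[\ell,L]}|g|$. The level curve $\Lambda_\delta$ encloses $[\ell,L]$ in $\Ddisk_\delta$. By the maximum principle, $\sup_{[\ell,L]}|g|^2\le B(g)$. This gives the $B(g)$ part of \eqref{eq:generic-rate}.

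For the fluctuation term, the integrand is $(1-g)(\widehat F-F)/z$. The idea is to view this as analytic in $\mu$ on the annular region between $\Lambda_\delta$ and $\Gamma_\delta$, or at least to estimate it directly on $\Gamma_\delta$. On $\Gamma_\delta$ we have $|1-g|\le 1+V(g)^{1/2}$ and $|z|\gtrsim 1$. It therefore suffices to prove the uniform second-moment bound
\[
 \E\Bigl[\ind_E\sup_{\mu\in\Gamma_\delta}|\widehat F(\mu)-F(\mu)|^2\Bigr]\lesssim n^{-1}\cdot(\text{polylog factors absorbed by }\delta).
\]
This step requires three ingredients. First, the anisotropic local law of \citet{KnowlesYin2017}, applied at spectral parameters $\zeta(\mu)$ lying outside the spectrum at distance $\gtrsim\delta$, gives the expansion $\frac1n(Xv)^\top(\widehat S-zI)^{-1}X\beta = v^\top \frac{m\Sigma}{1+m\Sigma}\beta+O_\prec(n^{-1/2})$. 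The noise part $\frac1n(Xv)^\top(\widehat S-z)^{-1}\varepsilon$ is conditionally Gaussian with variance $O(n^{-1})$. Second, $|\widehat\zeta-\zeta|\prec n^{-1}$: this comes from the averaged local law $|\widehat m-m|\prec n^{-1}$ together with the nondegeneracy $m'(\zeta(\mu))\ne0$ implicit in Proposition~\ref{prop:calibration}, via an implicit-function argument. Third, a Lipschitz bound of the resolvent in $z$ away from the spectrum. To pass from these high-probability bounds to a second-moment bound, I would use a net on $\Gamma_\delta$ and the crude deterministic bound $|\widehat F|\lesssim n^{C}$ on $E$.

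The main obstacle is that $V(g)$ appears only divided by $n$, without an extra $\delta^{-C}$ or $n^{\epsilon}$ loss. A naive local-law bound $O_\prec(n^{-1/2})$ loses $n^\epsilon$. I would therefore replace the high-probability statement by a direct variance computation on $\Gamma_\delta$. Here $\operatorname{dist}(\zeta(\mu),\operatorname{spec})\gtrsim\delta\sqrt\gamma L$, so the resolvent has norm $O(\log n)$. Moreover, $\widehat F-F$ is a smooth functional of $Z$, so the Efron--Stein or Poincaré-type inequality (via cumulant expansion under the moment bounds) gives variance $O(n^{-1})$ times powers of $\|(\widehat S-\zeta)^{-1}\|$. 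The bias of $\widehat F$ must be shown to be $O(n^{-1})$ by second-order resolvent expansion. The leftover $\log n$ powers, namely $\delta^{-O(1)}$, would need to be absorbed. If this fails, I would instead shift the integral of the $g$-part inward to $\Lambda_\delta$, where $|g|^2\le B(g)$ controls it, and keep only the $1$-part and the $V(g)$-part on $\Gamma_\delta$, where the resolvent is farther from the spectrum.
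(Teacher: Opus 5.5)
Your handling of the bias and of the exceptional event agrees with the paper. The bias bound uses the maximum principle on the region bounded by $\Lambda_\delta$, together with $\sum_j|v^\top v_j||v_j^\top\beta|\le B_0$. One small addition: absorbing the $n^{-D}$ term into $V(g)/n$ needs $V(g)\ge|g(0)|^2=1$.

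\textbf{The gap is in the stochastic term.} Controlling it through $\E\bigl[\ind_E\sup_{\mu\in\Gamma_\delta}|\widehat F(\mu)-F(\mu)|^2\bigr]$ necessarily loses a power of $\log n$. That loss cannot be ``absorbed by $\delta$''.
\begin{itemize}
\item The loss sits in the leading pointwise variance, not in a remainder. Take $\Sigma=LI_p$, $v$ parallel to $\beta$, and $\mu=L-R_\delta$. Then $\zeta(\mu)$ lies at distance of order $\delta^2$ (not $\delta$) from the lower spectral edge.
\item At this point $Q=n^{-1}\tr(\Sigma^2(\Sigma-\mu)^{-2})=\gamma L^2/R_\delta^2=(1+\delta)^{-2}$. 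Hence the factor $S=m^2/(1-Q)$ in \eqref{eq:leading-covariance} is of order $\delta^{-1}$, and the pointwise variance of $\widehat F(\mu)$ is of order $\delta^{-1}/n\asymp\log n/n$.
\item Since $\delta\asymp1/\log n$, every $\delta^{-C}$ is a power of $\log n$. The constant in \eqref{eq:generic-rate} may not depend on $n$.
\end{itemize}
So your route gives at best $B(g)+V(g)(\log n)^C/n$, which would cost a logarithmic factor in Theorem~\ref{thm:upper}. Pointwise Efron--Stein or Poincar\'e bounds cannot repair this, for the same reason.

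\textbf{The missing idea} is to take moments of the integrated functional $\oint_{\Gamma_\delta}\psi(z)(\widehat F-F)(z)\dd z$ with $\psi=(1-g)/z$, rather than bounding the field uniformly on the contour.
\begin{itemize}
\item The paper computes the two-point covariance of the design fluctuation to leading order, with remainder $O(n^{-3/2+\epsilon}\delta^{-C})$. There polylogarithmic losses are harmless because of the extra power of $n$.
\item In Lemma~\ref{lem:integrated-kernel} it passes to $w=\sqrt\gamma L/(L-z)$, which maps $\Gamma_\delta$ to $|w|=(1+\delta)^{-1}$. It then dominates the leading kernel, in the positive-definite order, by the Szeg\H{o} kernel $(1-w\bar\xi)^{-1}$ times bounded analytic multipliers.
\item Parseval then gives $\sum_{j\ge0}(1+\delta)^{-2j}|\widehat b(j)|^2\le\norm b_{L^2}^2\lesssim\norm\psi_{\infty,\Gamma_\delta}^2$. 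The $\delta^{-1}$ blow-up lives only on the diagonal of the kernel and disappears after integration.
\item The fourth-cumulant part is bounded separately by Minkowski's inequality. The conditional noise covariance is handled by the same kernel bound.
\end{itemize}

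Your fallback, moving the $g$-part of the integral inward to $\Lambda_\delta$, is not available. $\widehat F$ is defined only through the calibration, which requires $|\mu-L|>\sqrt\gamma L$ (Proposition~\ref{prop:calibration}). Consequently $\widehat F-F$ has no analytic continuation to the region between $\Lambda_\delta$ and $\Gamma_\delta$.
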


The right-hand side of \eqref{eq:generic-rate} explicitly shows how the bias-variance trade-off depends on $g$: the variance term is determined by the maximum value of $g$ on the contour $\Gamma_\delta$ and the bias term is determined by the maximum value of $g$ on $\Lambda_{\delta}$ which will shrink to the slit $[\ell, L]$ as $\delta$ goes to zero.
The next theorem, whose proof is deferred in Appendix~\ref{app:pf:thm:two}, gives a constraint on these two values and gives the closed form of the ideal $g$ to achieve the optimal rate.
\begin{theorem}\label{thm:two}
For sufficiently small $\delta$, every function $g$ holomorphic in $\{z\in\Ddisk_\delta\setminus[\ell,L]:\delta<u_\delta(z)<1\}$, continuous on its two boundaries, and normalized by $g(0)=1$ satisfies
\begin{equation}\label{eq:two-constants}
 B(g)^{1-\alpha_\delta}V(g)^{\alpha_\delta}\ge1.
\end{equation}
For every integer $K\ge1$, the function
\begin{equation}\label{eq:gK}
 g_K(z)=\frac{\cosh(Kq_\delta(z))}{\cosh(Kq_\delta(0))}
\end{equation}
has a holomorphic extension on a neighborhood of the closure of $\mathbb{D}_{\delta}$, satisfies $g_K(0)=1$, and obeys
\begin{equation}\label{eq:gK-bound}
      \begin{aligned}
            B(g_K)&\le4\exp\left(-\frac{2\pi K(1-\delta)\Kell(k^2)}
            {\Kell(1-k^2)}\alpha_\delta\right),\\
            V(g_K)&\le4\exp\left(\frac{2\pi K(1-\delta)\Kell(k^2)}
            {\Kell(1-k^2)}(1-\alpha_\delta)\right).
           \end{aligned}      
\end{equation}
Consequently $1\le B(g_K)^{1-\alpha_\delta}V(g_K)^{\alpha_\delta}\le4$.
\end{theorem}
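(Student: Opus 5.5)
The plan is to treat both parts of Theorem~\ref{thm:two} through the annular region $\Omega_\delta=\{z\in\Ddisk_\delta\setminus[\ell,L]:\delta<u_\delta(z)<1\}$. Its two boundary components are $\Lambda_\delta$, where $u_\delta=\delta$, and $\Gamma_\delta$, where $u_\delta=1$, and the point $0$ lies inside.

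For the lower bound \eqref{eq:two-constants} I would use the Hadamard two-constants theorem. The function $\log|g|$ is subharmonic on $\Omega_\delta$ and is bounded above on the boundary by $\tfrac12\log B(g)$ on $\Lambda_\delta$ and by $\tfrac12\log V(g)$ on $\Gamma_\delta$. The affine rescaling $w=(u_\delta-\delta)/(1-\delta)$ is harmonic on $\Omega_\delta$, equals $0$ on $\Lambda_\delta$ and $1$ on $\Gamma_\delta$. Hence
\[
\log|g|\le\tfrac12\bigl[(1-w)\log B(g)+w\log V(g)\bigr]
\]
on $\partial\Omega_\delta$, and by the maximum principle this also holds inside. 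Evaluating at $0$, where $w(0)=\alpha_\delta$ by definition, and using $g(0)=1$, gives $0\le(1-\alpha_\delta)\log B+\alpha_\delta\log V$, which is exactly \eqref{eq:two-constants}.

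Two points need care. First, I must check that $0\in\Omega_\delta$, which means $u_\delta(0)>\delta$. This holds for small $\delta$ because $u_\delta(0)\to u_0(0)=\alpha(\kappa,\gamma)>0$ by \eqref{eq:alpha-delta}. Second, the boundary of $\Omega_\delta$ is regular: $\Lambda_\delta$ is a smooth level curve, with no critical points of $u_\delta$ on it, which one gets from the conformal map below.

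For $g_K$, the main tool is the elliptic conformal structure behind \eqref{eq:explicit-potential}. The complex potential $q_\delta$ in \eqref{eq:complex-potential} has real part $c\,u_\delta$, where $c=\pi\Kell(k^2)/\Kell(1-k^2)$. Its imaginary part $c\,\widetilde u_\delta$ is multivalued on the doubly connected region, with period equal to $c$ times the flux $\oint\partial_n u_\delta$. The normalization $\pi\Kell(k^2)/\Kell(1-k^2)$ is exactly the one making the conjugate period equal to $2\pi\ii$ times an integer. I would verify this using the explicit map from the slit disk onto an annulus. Equivalently, $\exp(q_\delta)$ maps $\Ddisk_\delta\setminus[\ell,L]$ conformally onto the annulus $\{1<|\zeta|<e^{c}\}$, via the Whittaker--Watson rectangle construction.

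A sign ambiguity in $\widetilde u_\delta$ then disappears because $\cosh$ is even. The function $\cosh(Kq_\delta)=\tfrac12(\zeta^K+\zeta^{-K})$ is single-valued. It is also holomorphic across the slit: on both sides of $[\ell,L]$ we have $u_\delta=0$, so the two boundary values are related by $q\mapsto-q$, and Schwarz reflection across $\operatorname{Re}q=0$ shows the function extends holomorphically over $(\ell,L)$. The endpoints are removable singularities since the function is bounded. Across $\Gamma_\delta$ it extends because $\Gamma_\delta$ is an analytic arc and $u_\delta$ reflects harmonically. This gives a holomorphic extension to a neighbourhood of $\overline{\Ddisk_\delta}$, and $g_K(0)=1$ holds trivially.

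The bounds \eqref{eq:gK-bound} then follow from $|\cosh(x+\ii y)|\le\cosh x\le e^{|x|}$ together with a lower bound on $|\cosh(Kq_\delta(0))|$. Since $\widetilde u_\delta(0)=0$, $q_\delta(0)=c\,u_\delta(0)$ is real, so $\cosh(Kq_\delta(0))\ge\tfrac12e^{Kc\,u_\delta(0)}$. This gives:
\begin{itemize}
\item on $\Lambda_\delta$, $|g_K|\le 2e^{Kc(\delta-u_\delta(0))}=2e^{-Kc(1-\delta)\alpha_\delta}$;
\item on $\Gamma_\delta$, $|g_K|\le 2e^{Kc(1-u_\delta(0))}=2e^{Kc(1-\delta)(1-\alpha_\delta)}$.
\end{itemize}
Squaring gives the two bounds in \eqref{eq:gK-bound}. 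The product bound follows by combining \eqref{eq:gK-bound} with \eqref{eq:two-constants}, since the exponents cancel.

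I expect the main obstacle to be the period and reflection argument: showing rigorously that $\cosh(Kq_\delta)$ is single-valued on the doubly connected domain and extends holomorphically across the slit and its endpoints. This hinges on identifying the conjugate period with $2\pi\ii\,\Kell(1-k^2)/(\pi\Kell(k^2))$ times $c$. I would try to settle it first via the explicit formula \eqref{eq:explicit-potential}, reading it as an inverse Jacobi $\sn$ map onto a rectangle of side ratio $\Kell/\Kell'$.
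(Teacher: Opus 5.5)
Your proposal is correct and follows essentially the paper's route: the two-constants bound via the harmonic measure $(u_\delta-\delta)/(1-\delta)$ and the subharmonic maximum principle, then single-valuedness of $\cosh(Kq_\delta)$ from the $2\pi\ii$ period and extension across the open slit via the trace relation (the paper uses Morera's theorem where you reflect across $\operatorname{Re}q=0$), followed by removable endpoints by boundedness, Schwarz reflection across $\Gamma_\delta$, and the elementary $\cosh$ estimates. One small correction: $u_\delta=0$ on both sides of the slit only makes both traces purely imaginary, so the relation $q_{\delta,-}=-q_{\delta,+}\pmod{2\pi\ii}$ (equivalently, that your reflected extension agrees with $g_K$ below the slit) also needs the conjugation symmetry $q_\delta(\bar z)=\overline{q_\delta(z)}$, which holds because the domain is symmetric and $q_\delta(0)$ is real.
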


\begin{remark}
The above trade-off \eqref{eq:two-constants} is essentially the two-constants theorem (see \citet[Theorem 4.3.7]{ransford1995potential}). The key insight why $u_0(0)$ implies the minimax exponent is because the extremal case of \eqref{eq:two-constants} is achieved by the harmonic measure by the two-constants theorem. In fact, the equality in \eqref{eq:two-constants} is achieved by $\exp(K(q_\delta(z)-q_\delta(0)))$. However, it has different values on the two sides of the slit $[\ell, L]$ so that it cannot be holomorphically extended to a neighborhood of the closure of $\mathbb{D}_{\delta}$. The hyperbolic cosine function in \eqref{eq:gK} symmetrizes the optimal exponential function, and makes it holomorphic on the whole disc. As is shown in Theorem~\ref{thm:two}, although $g_K$ does not attain the equality in \eqref{eq:two-constants}, it still achieves the constant upper bound of the left-hand side of  \eqref{eq:two-constants}, which is enough to achieve the optimal rate.
\end{remark}

Combining the generic rate \eqref{eq:generic-rate} and the bias-variance trade-off \eqref{eq:two-constants} already suggests the rate $n^{-\alpha_{\delta}}$ which converges to the minimax rate $n^{-\alpha(\kappa,\gamma)}$ as the margin $\delta$ shrinks. In particular, we can choose $K$ in $g_K$ to balance the two exponentials in \eqref{eq:gK-bound}. Following this idea, the theorem below gives the estimator to achieve the optimal rate.

\begin{theorem}\label{thm:upper}
Choose $\delta\in[1/\log n,2/\log n]$ and
            \begin{equation}\label{eq:scalar-degree}
             K=\left\lfloor
             \frac{\Kell(1-k^2)\log n}
             {2\pi(1-\delta)\Kell(k^2)}
             \right\rfloor.
            \end{equation}
            Then the Bocca estimator with $g_K$ in \eqref{eq:gK} satisfies
            \begin{equation}\label{eq:upper}
             \sup_{P\in\mathcal P(n,p,\gamma,\kappa)}\E_P(\widehat\beta_v(g_K)-\beta_v)^2
             \lesssim n^{-\alpha(\kappa,\gamma)}.
            \end{equation}
            \end{theorem}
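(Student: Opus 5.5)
Theorem~\ref{thm:upper} follows by combining Theorem~\ref{thm:generic} with the explicit bounds \eqref{eq:gK-bound} of Theorem~\ref{thm:two}, and then removing the $\delta$-dependence of the exponent through \eqref{eq:alpha-delta}.

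First, Theorem~\ref{thm:two} says that $g_K$ is holomorphic on a neighborhood of the closure of $\Ddisk_\delta$, deterministic, and satisfies $g_K(0)=1$. Hence Theorem~\ref{thm:generic} applies and gives
\[
\sup_{P}\E_P(\widehat\beta_v(g_K)-\beta_v)^2\lesssim B(g_K)+V(g_K)/n .
\]
Write $c_\delta=2\pi(1-\delta)\Kell(k^2)/\Kell(1-k^2)$. The choice \eqref{eq:scalar-degree} is $K=\lfloor \log n/c_\delta\rfloor$, so $\log n-c_\delta\le Kc_\delta\le\log n$. Substituting into \eqref{eq:gK-bound}:
\[
B(g_K)\le 4e^{-Kc_\delta\alpha_\delta}\le 4e^{c_\delta\alpha_\delta}\,n^{-\alpha_\delta},
\qquad
V(g_K)/n\le 4e^{Kc_\delta(1-\alpha_\delta)}/n\le 4n^{-\alpha_\delta}.
\]
To conclude that the first constant is bounded, I need $c_\delta$ bounded uniformly in $n$. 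This holds because $k=(R_\delta-L+\ell)/(R_\delta+L-\ell)$ lies in a compact subset of $(0,1)$. Indeed, $R_\delta=\sqrt\gamma(1+\delta)L$ with $\gamma$ ranging in a compact subset of $(1,\infty)$ by \eqref{eq:aspect-regime}, and $\delta\to0$. Hence $\Kell(k^2)$ and $\Kell(1-k^2)$ stay bounded above and away from zero, so $B(g_K)+V(g_K)/n\lesssim n^{-\alpha_\delta}$. One must also check $K\ge1$ for large $n$, which holds because $c_\delta$ is bounded.

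It remains to replace $n^{-\alpha_\delta}$ by $n^{-\alpha(\kappa,\gamma)}$. By \eqref{eq:alpha-delta}, $\alpha_\delta=\alpha(\kappa,\gamma)+O(\delta)$, and $\delta\le 2/\log n$. Therefore
\[
n^{-\alpha_\delta}=n^{-\alpha(\kappa,\gamma)}\exp\bigl(O(\delta)\log n\bigr)\le e^{O(1)}n^{-\alpha(\kappa,\gamma)} .
\]
This is exactly why $\delta$ is taken of order $1/\log n$.

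The main point requiring care is that the $O(\delta)$ constant in \eqref{eq:alpha-delta} must be uniform over the allowed range of $\gamma=p/n$, since $\gamma$ varies with $n$. I would verify this from the explicit formula \eqref{eq:alpha-exponent}: $\alpha(\kappa,\gamma')$ is a smooth function of $\sqrt{\gamma'}$ on compact subsets of $(1,\infty)$, because $\xi$ and $\rho$ are smooth there and remain in a compact subset of $\{0<\xi<\rho<1\}$. It is therefore Lipschitz in $\gamma'$ near $\gamma$, and
\[
\frac{\alpha(\kappa,\gamma(1+\delta)^2)-\delta}{1-\delta}-\alpha(\kappa,\gamma)=O(\delta)
\]
uniformly. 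The same uniformity lets the "sufficiently large $n$" thresholds in Theorems~\ref{thm:generic} and \ref{thm:two} be chosen independently of the particular $\gamma$, which completes the proof.
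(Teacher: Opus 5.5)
Your proposal is correct and follows the paper's proof essentially step for step. You apply Theorem~\ref{thm:generic} to $g_K$, use the floor bounds $\log n-c_\delta\le Kc_\delta\le\log n$ in \eqref{eq:gK-bound} to get $B(g_K)+V(g_K)/n\lesssim n^{-\alpha_\delta}$ with a uniformly bounded prefactor, and then absorb $(\alpha(\kappa,\gamma)-\alpha_\delta)\log n=O(\delta\log n)=O(1)$; your additional checks ($K\ge1$ and uniformity of the $O(\delta)$ term in $\gamma$) are exactly what the paper takes from Lemma~\ref{lem:geometry}.
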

           The proof of the theorem is in Appendix~\ref{app:pf:thm:upper}. We will show in Theorem~\ref{thm:lower} that the above rate is minimax optimal.

           \begin{remark}
            \label{rem:exponent-asymptotics}
            The exponent $\alpha(\kappa,\gamma)$ is strictly decreasing in both $\kappa > 1$ and $\gamma > 1$. Therefore, increasing either the
            covariance condition number or the dimension ratio slows the minimax rate.
            When both $\kappa$ and $\gamma$ diverge, we can show that
            \begin{equation}\label{eq:alpha-joint}
             \alpha(\kappa,\gamma)
             \sim \frac{4}{\sqrt\kappa\,\log\gamma}.
            \end{equation}
            See Proposition~\ref{prop:exponent-asymptotics} in the appendix for details.
            This shows that the optimal rate is less sensitive to the growth of the proportion $\gamma$ than the covariance condition number $\kappa$.
            \end{remark}

\subsection{Computation}\label{sec:computation}
We now discuss the computation of the Bocca estimator. One of the nice properties of $\widehat{\beta}_v(g)$ in \eqref{eq:estimator} is that it is linear in $v$ before the clipping. Therefore, we can compute the estimator before specifying the loading $v$.

\noindent{\bf Contour discretization.} We compute the contour integral in \eqref{eq:estimator} by discretizing the contour circle $\Gamma_\delta$ into $M$ equally spaced points.
 Define $t_j=\pi+\frac{2\pi j}{M}$, $\mu_j=L+R_\delta e^{\ii t_j}$, for $j=0,\ldots,M$.
The nodes traverse $\Gamma_\delta$ counterclockwise, starting at
$\mu_0=L-R_\delta<0$, and $\mu_M=\mu_0$.
Compute the eigendecomposition of $\widehat S=XX^\top/n  
=U\diag(s_1,\ldots,s_n)U^\top.
$
 Then
$\widehat m(z)=n^{-1}\sum_{i=1}^n(s_i-z)^{-1}$.

\noindent{\bf Computing $\widehat\zeta$ by Newton's method.} As $\widehat m(\widehat\zeta(\mu))=-1/\mu$, for each node $\mu_j$, we can solve $z$ from the equation
\begin{equation}\label{eq:zhat-scalar}
 \frac1n\sum_{i=1}^n\frac1{s_i-z}+\frac1{\mu_j}=0.
\end{equation}
We start with the real node $\mu_0=L-R_\delta<0$, as $ s_1+\mu_0
\le \widehat\zeta(\mu_0)
\le s_1+\frac{\mu_0}{n}$, we can solve $z_0 = \widehat\zeta(\mu_0)$ by bisection on the interval $[s_1+\mu_0, s_1+\frac{\mu_0}{n}]$.
We then continue to compute the next node iteratively applying the Newton's method to solve \eqref{eq:zhat-scalar}. Given $z_j = \widehat\zeta(\mu_j)$ is computed, we describe how to compute $z_{j+1} = \widehat\zeta(\mu_{j+1})$.
Differentiate
$\widehat m(\widehat\zeta(\mu))=-1/\mu$ to obtain
\begin{equation}\label{eq:zhat-derivative}
 \widehat\zeta'(\mu)
 =
 \left[
 \mu^2\frac1n\sum_{i=1}^n
 (s_i-\widehat\zeta(\mu))^{-2}
 \right]^{-1}.
\end{equation}
We then apply the Newton's method 
\[
z_{j+1}^{(r+1)}
      =
      z_{j+1}^{(r)}
      -
      \frac{
      n^{-1}\sum_{i=1}^n(s_i-z_{j+1}^{(r)})^{-1}+\mu_{j+1}^{-1}
      }{
      n^{-1}\sum_{i=1}^n(s_i-z_{j+1}^{(r)})^{-2}
      }, \text{ where }
      z_{j+1}^{(0)}
 =
 z_j+
 \frac{\mu_{j+1}-\mu_j}
 {\mu_j^2 n^{-1}\sum_{i=1}^n(s_i-z_j)^{-2}}.
\]
We will stop the iteration when achieved the desired tolerances and denote the resulting value as $z_{j+1}$.

\noindent{\bf Deterministic kernel values.}
For the specific choice of $g = g_K$ in \eqref{eq:gK}, we compute its value on the contour 
$\mu(t)=L+R_\delta e^{\ii t}$ withtout computing $q_\delta(z)$ in its definition. For $t\notin2\pi\mathbb Z$, set
\[
 y(t)=
 \Fell\left(
 \arctan\frac{\cot(t/2)}k
 \,\middle|\,1-k^2
 \right).
\]
The reflected kernel can then be evaluated directly as
\begin{equation}\label{eq:gK-contour}
 g_K(\mu(t))
 =
 {
 \cosh\left(
 \dfrac{\pi K}{\Kell(1-k^2)}
 \bigl(\Kell(k^2)-\ii y(t)\bigr)
 \right)}\Big/
 {
 \cosh\left(
 \dfrac{\pi K\Kell(k^2)}{\Kell(1-k^2)}u_\delta(0)
 \right)
 },
\end{equation}
where $u_\delta(0)$ is given in \eqref{eq:explicit-potential}.
At $t\in2\pi\mathbb Z$, use either one-sided limit.
The two limits agree because $y(t)$ changes by
$2\Kell(1-k^2)$ and $K$ is an integer.
Thus all kernel evaluations require only real elliptic integrals
and a complex hyperbolic cosine.
Those formulas are verified in Appendix~\ref{app:elliptic}.

\noindent{\bf Bocca estimator computation for all loadings.}
Compute the real weights
\begin{equation}\label{eq:bocca-spectral-weights}
 w_i
 =
 -\frac1M\operatorname{Re}\sum_{j=0}^{M-1}
 \frac{(\mu_j-L)\bigl(1-g(\mu_j)\bigr)}
 {\mu_j(s_i-z_j)},
 \text{ for } i=1,\ldots,n,
\end{equation}
and the whole coefficient vector estimator
\[
 \widetilde\beta_M(g)
 =
 \frac1nX^\top
 \left[
 U\left(
 \diag(w_1,\ldots,w_n)(U^\top Y)
 \right)
 \right].
\]
For a given $v$, we compute the Bocca estimator by
$
 \widehat\beta_{v,M}(g)
 =
 \clip_{[-B_0,B_0]}
 \bigl(v^\top\widetilde\beta_M(g)\bigr).
$

For the $g_K$ in \eqref{eq:gK} with the conjugation symmetry, we have
\[
 \mu_{M-j}=\overline{\mu_j},
 \widehat\zeta(\mu_{M-j})
 =\overline{\widehat\zeta(\mu_j)},
 g_K(\mu_{M-j})=\overline{g_K(\mu_j)},
 \text{ for } 1\le j<M.
\]
Thus one may compute grids on a single
semicircle of $\Gamma_\delta$ and obtain the remaining values by conjugation.

\section{Information-theoretic lower bound}\label{sec:lower}

In this section, we establish the lower bound which matches the rate of the Bocca estimator in Theorem~\ref{thm:upper}.

\begin{theorem}\label{thm:lower}
Given any fixed $v$ with unit lenght $\|v\|_2=1$, let $\widetilde\beta_v$ be any estimator for $\beta_v$. Then we have
\begin{equation}\label{eq:lower}
 \inf_{\widetilde\beta_v}
 \sup_{P\in\mathcal G(n,p,\gamma,\kappa)}
 \E_P(\widetilde\beta_v-\beta_v)^2
 \gtrsim n^{-\alpha(\kappa,\gamma)}.
\end{equation}
Consequently, the same lower bound holds with $\mathcal P(n,p,\gamma,\kappa)$.
\end{theorem}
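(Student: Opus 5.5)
We prove the lower bound by Le Cam's two-point method, or more precisely by a mixture-versus-mixture (fuzzy hypotheses) argument inside the Gaussian family $\mathcal G(n,p,\gamma,\kappa)$. The construction mirrors the upper bound. An estimator built from $(X,Y)$ effectively sees $\beta_v$ only through the field $F$ on the observation contour, up to $n^{-1/2}$ noise. We therefore build two priors under which the spectral measures $\nu_0,\nu_1$ have masses differing by $\asymp n^{-\alpha/2}$, while all observable moments agree up to order $n^{-1/2}$.

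\textbf{Step 1: the extremal measure.} Consider the two-constants dual problem on $\Ddisk_\delta\setminus[\ell,L]$. We look for a signed measure $\eta$ on $[\ell,L]$ with total mass $\eta([\ell,L])=\varepsilon$ whose Cauchy transform $\int\lambda(\lambda-z)^{-1}\eta(\dd\lambda)$ is at most $n^{-1/2}$ on $\Gamma_\delta$. By duality with Theorem~\ref{thm:two}, the largest achievable mass is
\[
\varepsilon\asymp \inf_g\bigl(\sup_{[\ell,L]}|g|\bigr)^{-1}\cdot n^{-1/2}\cdots,
\]
which we evaluate via the harmonic measure $\omega$ of $[\ell,L]$ seen from $0$ versus from $\Gamma_\delta$. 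Concretely, we take $\eta$ to be the difference of two positive measures produced by the Hahn--Banach separation dual to $B(g)^{1-\alpha}V(g)^{\alpha}\ge1$. This yields $\varepsilon\asymp n^{-\alpha(\kappa,\gamma)/2}$. Here $\alpha_0=u_0(0)=\alpha(\kappa,\gamma)$ by \eqref{eq:alpha-delta} with $\delta=0$, and the contour $\Gamma_0$ of radius $\sqrt\gamma L$ is exactly the support boundary appearing in Proposition~\ref{prop:calibration}.

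\textbf{Step 2: realizing $\eta$ by parameters.} Choose a common random $\Sigma$, drawn from an orthogonally invariant ensemble with spectrum in $[\ell,L]$ so that $v$ is not distinguished. Then choose $\beta^{(0)},\beta^{(1)}$ with $\nu_1-\nu_0=\eta$, with $\norm{\beta}\le B_0$, and with the same value of $\beta^\top\Sigma\beta+\sigma^2$ (adjusting $\sigma$). A convenient realization randomizes $\beta$ as Gaussian with covariance $\tau^2 h(\Sigma)$, with a deterministic shift along $v$. The law of $(X,Y)$ is then jointly Gaussian given $\Sigma$: rows $(x_i,y_i)\sim N(0,\Psi)$, where $\Psi$ depends on $\Sigma$, on $\Sigma\beta$ and on $\beta^\top\Sigma\beta$. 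Under the orthogonal mixing, what matters is the joint spectral data of $\Sigma$ and of the vectors $\Sigma^{1/2}\beta$, $\Sigma^{1/2}v$.

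\textbf{Step 3: bounding the divergence.} We bound $\chi^2$ or $\KL$ between the two mixtures. Conditionally on $\Sigma$, the difference between the two Wishart-type likelihoods is governed by a low-rank perturbation of $\Psi$. After averaging over the Haar rotation, the $\chi^2$ divergence expands in the resolvent quadratic forms $v^\top(\widehat\Sigma-z)^{-1}\cdots$. By the anisotropic local law \citep{KnowlesYin2017} and Proposition~\ref{prop:calibration}, these concentrate on $F(\mu)$ for $\mu\in\Gamma_0$, with fluctuations of order $n^{-1/2}$. Since by construction $F_1-F_0$ is at most $n^{-1/2}$ on $\Gamma_0$, the divergence is $O(1)$. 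Le Cam's bound then gives risk at least $\varepsilon^2\asymp n^{-\alpha}$. Because $\mathcal G\subseteq\mathcal P$, the final sentence of the theorem follows.

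\textbf{Main obstacle.} The main obstacle is Step 3. We must rigorously convert the analytic smallness of the transform of $\eta$ on $\Gamma_0$ into smallness of a statistical divergence. This requires showing that the likelihood ratio depends on the data only through the spectral resolvent field outside the disk of radius $\sqrt\gamma L$, uniformly up to the edge. We would first try a Gaussian-sequence reduction. Conditioning on $X$, the law of $Y$ is $N(X\beta,\sigma^2 I)$; with $\beta$ Gaussian, this reduces the KL divergence to a trace functional $\tr\bigl(\phi(\widehat S)\bigr)$ of rational functions. We would then expand this functional by a contour integral against the Marchenko--Pastur law, using the fact that a function analytic off the slit with small boundary values has small moments against the companion spectral measure. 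The edge effect at $\delta\to0$ should cost only constants, since $\alpha_\delta=\alpha+O(1/\log n)$ and $n^{O(1/\log n)}=O(1)$.
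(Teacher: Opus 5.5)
There is a genuine gap. It is the step you flagged as the main obstacle, and the route you sketch around it does not work. As written, Step~3 is not an argument. Knowing that certain resolvent statistics concentrate around $F_j(\mu)$ on $\Gamma_0$, and that $F_1-F_0=O(n^{-1/2})$ there, controls only the laws of those statistics, not $\operatorname{TV}(P_0,P_1)$. You would need the likelihood ratio to depend on the data only through them, which is exactly what the lower bound must establish.

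Your proposed reduction also fails for the prior of Step~2. When $\Sigma$ comes from an orthogonally invariant ensemble, the design is itself informative about $\Sigma$: its likelihood is $\det(\Sigma)^{-n/2}\exp(-\tr(\Sigma^{-1}X^\top X)/2)$. Hence the law of $Y$ given $X$ is a mixture of $N(X\phi_j(\Sigma)v,\tau^2Xh(\Sigma)X^\top+\sigma^2I_n)$ over the posterior of $\Sigma$ given $X$. That mixture is neither Gaussian nor a spectral function of $\widehat S$, so the divergence does not reduce to a trace $\tr\phi(\widehat S)$. Invoking the local law and Proposition~\ref{prop:calibration} on $\Gamma_0$ itself is a further problem. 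The proposition requires $|\mu-L|>\sqrt\gamma L$ strictly, and the stability factor $1-n^{-1}\sum_j\lambda_j^2/|\lambda_j-\mu|^2$ can vanish on the critical circle.

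The missing idea is a prior conjugate to the Gaussian design likelihood, which makes the conditional response law exact. The paper builds a preliminary $(p-1)$-covariate regression with $\Sigma_-(V)=(I_d+V\mathsf DV^\top)^{-1}$ and a density on $V$ containing $\det(I_d+V\mathsf DV^\top)^{-n/2}$, which cancels the design normalizer. Three consequences follow.
\begin{itemize}
\item The design marginals coincide under both hypotheses.
\item The columns of $V$ are independent Gaussians given $\mathsf X$.
\item Because the random coefficient $V\vartheta_j$ is tied to the same $V$, one has exactly $\mathsf Y\mid\mathsf X\sim N(0,\mathcal V_j(\mathsf X))$, with $\mathcal V_j$ a rational function of $\mathsf X\mathsf X^\top$.
\end{itemize}
The conditional Kullback--Leibler divergence is then an explicit sum over the eigenvalues $x_i$ of $\mathsf X\mathsf X^\top/d$. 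The difference $v_1(x)-v_0(x)$ is, up to an $O(\tau K/n)$ correction, the transform $\tau\int\mathcal K_x\,d(\nu_1-\nu_0)$. Its two poles $1+\sqrt{\gamma^\circ}e^{\pm\ii t}$ sweep out the critical circle as $x$ ranges over the Marchenko--Pastur support. This is how smallness on the circle becomes a bounded divergence. The coalescing poles at the spectral edges are handled by the weight $\mathcal W$, not by a $\delta$-margin.

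The target is also encoded differently from your shift along $v$. It lives in the noise variance $\varpi_j$, with $\varpi_j+\tau\int\lambda\,d\nu_j=s_0$ common to both hypotheses. It is moved into $\beta_1$ by the common map $X_1=\mathsf Y+\xi$, $Y=\mathsf Y-\xi$.

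Finally, your Step~1 is only a sketch. A Hahn--Banach dual of the two-constants inequality, as you describe it, does not supply the quantitative controls the construction needs. These are a bounded $\norm{\eta}_{\mathrm{TV}}$, the endpoint-weighted integrals that keep $\kappa^{-1}I_p\preceq\Sigma_j(V)\preceq I_p$, and positive atomic measures with $O(\log n)$ atoms. The paper obtains all of these from the jump of $e^{-K\mathfrak q}$ across the slit, multiplied by an endpoint factor and then discretized by positive cubature.
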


\paragraph{Proof sketch.}
We first take $\ell=\kappa^{-1}$, $L=1$, and $v=e_1$. The general bounds and loading follow from a fixed scaling and orthogonal transformation; see Appendix~\ref{app:mainproofs-lower}. In this normalized construction, two priors induce data laws $P_0,P_1$ such that
\[
 \operatorname{TV}(P_0,P_1)\le\frac13,
 \qquad \beta_1=b_j\text{ under prior }j,
 \qquad |b_1-b_0|\ge c n^{-\alpha(\kappa,\gamma)/2}.
\]
The construction begins with two $(p-1)$-covariate regressions having the same design marginal. Integrating out their random coefficients conditionally on that design gives Gaussian response laws. The difference of their conditional variances is a rational transform of two positive atomic measures $\nu_0,\nu_1$. Endpoint weights ensure the final covariance bounds; a weighted spectral integral controls the expected conditional Kullback--Leibler divergence. A common Gaussian transformation converts the difference in the preliminary noise variances into a difference in the final first coefficients. The two-hypothesis testing bound then proves the result. Appendix~\ref{app:lower} gives the complete construction.

\vspace{10pt}

\noindent{\bf Acknowledgement.} The content of this paper and all of the writing were generated by the author first. ChatGPT-5.6 was used for literature research, paper polishing, and identifying mathematical
and typographical errors .

\bibliographystyle{ims}
\bibliography{ref}


\appendix 
\newpage

\section*{Appendix}

\section{Notation and preliminaries for the proofs}\label{app:notation}

\paragraph{Norms, matrices, and distances.}
For a complex random variable $T$ and $r\ge1$, write $\norm T_{L^r}=(\E|T|^r)^{1/r}$. If $\psi$ is a deterministic function on a contour, then $\norm\psi_{\infty,\Gamma_\delta}=\sup_{z\in\Gamma_\delta}|\psi(z)|$. We use $\norm A_\op$ for the Euclidean operator norm, $\norm A_F=(\tr A^*A)^{1/2}$ for the Frobenius norm, and $A^*=\overline A^{\top}$. The relation $A\preceq B$ means $x^*(B-A)x\ge0$ for all vectors $x$. The Hausdorff distance between two sets $A,B$ is $\operatorname{dist}_{\mathrm H}(A,B)=\sup_{a\in A, b\in B} \norm{a-b}_2$. For probability laws $P,Q$, we write
\[
 \operatorname{TV}(P,Q)=\sup_E|P(E)-Q(E)|,
 \qquad D_{\mathrm{KL}}(P\Vert Q)=\int\log(dP/dQ)\,dP,
\]
with divergence $+\infty$ when $P$ is not absolutely continuous with respect to $Q$. For complex variables, $\operatorname{Cov}(T,S)=\E[(T-\E T)(S-\E S)]$ is the bilinear covariance; the Hermitian covariance is $\operatorname{Cov}(T,\bar S)$ and $\operatorname{Var}(T)=\E|T-\E T|^2$. For a real signed measure $\eta$, $\norm\eta_{\TV}=|\eta|(\R)$ denotes its total variation mass. These conventions differ by a factor two when $\eta=P-Q$. The variation of a real function is denoted by $\operatorname{Var}_{\R}(f)$, to distinguish it from both probability distance and statistical variance.

\paragraph{Matrix blocks and entry derivatives.}
The first $p$ coordinates of $\C^{p+n}$ form the feature block, and the last $n$ coordinates form the observation block. Let $P_1=\diag(I_p,0)$ and $P_2=\diag(0,I_n)$. In block-matrix expressions, a vector in $\C^p$, including $s_k$ below, is identified with its embedding in the first block. In products with $p\times p$ matrices it retains its original dimension. Write $\mathsf e_i=(0,e_i)\in\R^{p+n}$ for the $i$th observation-block vector, $s_k=\Sigma^{1/2}e_k\in\R^p$, and $x_{ki}=Z_{ik}/\sqrt n$. For a differentiable matrix statistic, $\partial_{ki}$ means differentiation with respect to the real variable $x_{ki}$, with every other entry held fixed. A spectral derivative, such as $G'(z)$, differentiates its complex argument instead. For smooth functions on $\C$, the complex derivatives are $\partial_z=(\partial_x-\ii\partial_y)/2$ and $\partial_{\bar z}=(\partial_x+\ii\partial_y)/2$; $dA=dx\,dy$ is planar area measure. In a bilinear matrix entry, $A_{vw}=v^{\top}Aw$; this notation does not include complex conjugation. An observation index, as in $G_{iw}$, abbreviates $G_{\mathsf e_i,w}$.

Set $\mathcal X=\Sigma^{1/2}Z^{\top}/\sqrt n$. The generalized resolvent and its deterministic comparison matrix are
\begin{equation}\label{eq:linearization}
 G(z)=\begin{pmatrix}-I_p&\mathcal X\\\mathcal X^{\top}&-zI_n\end{pmatrix}^{-1},
 \qquad
 \Pi(z)=\begin{pmatrix}-(I_p+m(z)\Sigma)^{-1}&0\\0&m(z)I_n\end{pmatrix}.
\end{equation}
Their diagonal blocks are indexed by $11$ and $22$. In particular, $G_{22}(z)=(\widehat S-zI_n)^{-1}$ and $G_{11}(z)=X^{\top}(\widehat S-zI_n)^{-1}X/n-I_p$. Smooth versions, needed when a real spectral argument falls near a sample eigenvalue on an exceptional event, are constructed in Lemma~\ref{lem:preliminary}. 

\paragraph{Two recurrent filter quantities.}
For the integer degree $K$, define the spectral error bound and the contour amplification by
\begin{equation}\label{eq:proof-filter-scales}
 \mathfrak b_K=e^{-Kq_\delta(0)},\qquad
 \mathfrak v_K=\exp\left(K\left[
 \frac{\pi\Kell(k^2)}{\Kell(1-k^2)}-q_\delta(0)\right]\right).
\end{equation}
The bounds in \eqref{eq:proof-filter-scales} are used in the scalar and maximum-coordinate proofs. The modulus $k=(R_\delta-L+\ell)/(R_\delta+L-\ell)$ is the one used in Section~\ref{sec:bias-var}. The geometric lemma gives $\sup_{[\ell,L]}|g_K|\le2\mathfrak b_K$ and $\sup_{\Gamma_\delta}|(1-g_K(z))/z|\le C\mathfrak v_K$.

\paragraph{Moment and derivative inequalities.}
For $1\le r\le s$, H\"older's inequality gives $\norm T_{L^r}\le\norm T_{L^s}$; this is the moment-monotonicity inequality also called Lyapunov's inequality. For a smooth compactly supported function $f$ on $\R$, the Sobolev norm used below is $\norm f_{W^{j,1}}=\sum_{a=0}^j\int_\R|f^{(a)}(x)|\,dx$. If $f$ is holomorphic on a neighborhood of $|z-z_0|\le r$, Cauchy's integral formula for derivatives gives
\begin{equation}\label{eq:Cauchy-derivative}
 f^{(j)}(z_0)=\frac{j!}{2\pi\ii}\oint_{|z-z_0|=r}
 \frac{f(z)}{(z-z_0)^{j+1}}\,dz,
 \qquad
 |f^{(j)}(z_0)|\le j!r^{-j}\sup_{|z-z_0|=r}|f(z)|.
\end{equation}
We use this form of Cauchy's derivative estimate, including its explicit power of $r^{-1}$; see \citet[Chapter~4, Section~2.3, Eq~(24)]{Ahlfors1979}. 

\section{Proofs of upper bounds}\label{app:mainproofs}

\subsection{The generic boundary-norm bound}\label{app:pf:thm:generic}
\begin{proof}[Proof of Theorem~\ref{thm:generic}]
The measure in \eqref{eq:population-field} satisfies $\norm\nu_\TV\le B_0$. By the assumed full-disk analyticity and $g(0)=1$, the quotient $(1-g(z))/z$ is holomorphic at zero, with value $-g'(0)$. Fubini's theorem and the residue theorem \citep[Chapter~4, Section~5.1, Theorem~17]{Ahlfors1979} give
\[
 -\frac1{2\pi\ii}\oint_{\Gamma_\delta}\frac{1-g(z)}zF(z)\,\dd z
 =\int_{[\ell,L]}(1-g(\lambda))\,\nu(\dd\lambda).
\]
The only possible pole of the inner integrand is at $\lambda$, with residue $-(1-g(\lambda))$; it is removable if this residue vanishes. Compact separation of the contour and the interval justifies Fubini's theorem. The population error is therefore $-v^\top g(\Sigma)\beta$. Lemma~\ref{lem:geometry} and the maximum-modulus principle \citep[Chapter~4, Section~3.4, Theorems~12 and~12\textquotesingle]{Ahlfors1979} imply that its modulus is at most $B_0\sqrt{B(g)}$.

Also $V(g)\ge|g(0)|^2=1$. The ratio assumption gives $\inf_{z\in\Gamma_\delta}|z|=L(\sqrt\gamma(1+\delta)-1)\ge c>0$ uniformly for all sufficiently large $n$. Hence
\[
 \sup_{z\in\Gamma_\delta}\left|\frac{1-g(z)}z\right|^2
 \le C(1+\sqrt{V(g)})^2\le C'V(g).
\]
Let $\mathcal A$ be the $X$-measurable event in Lemma~\ref{lem:field-moments}, with moment order two and failure exponent $D=2$. In particular $P(\mathcal A^c)\le Cn^{-2}$, the empirical inverse exists on $\mathcal A$, and
\[
 \E_P\left[\ind_{\mathcal A}
 \left|\frac1{2\pi\ii}\oint_{\Gamma_\delta}
 \frac{1-g(z)}z(\widehat F(z)-F(z))\,\dd z\right|^2\right]
 \le \frac{CV(g)}n.
\]
Taking the real part and clipping contract error to the real target $\beta_v\in[-B_0,B_0]$. On $\mathcal A$, the squared error is consequently at most twice the squared stochastic contribution plus $2B_0^2B(g)$. On $\mathcal A^c$ it is at most $4B_0^2$, including the zero fallback. Thus the risk is at most $2B_0^2B(g)+CV(g)/n+Cn^{-2}$. Since $V(g)\ge1$, the last term is absorbed into $CV(g)/n$. All bounds are uniform over the stated finite-dimensional family.
\end{proof}

\subsection{Two constants and extension through the spectrum}\label{app:pf:thm:two}
\begin{proof}[Proof of Theorem~\ref{thm:two}]
We first compare the logarithmic boundary norms by the maximum principle, then verify holomorphic extension of the reflected function, and finally bound its two norms.

\proofstep{The logarithmic boundary constraint.}
For $z\in\Ddisk_\delta\setminus[\ell,L]$ with $u_\delta(z)>\delta$, define
\[
 \mathfrak h(z)=\left(1-\frac{u_\delta(z)-\delta}{1-\delta}\right)\log B(g)
       +\frac{u_\delta(z)-\delta}{1-\delta}\log V(g).
\]
If either $B(g)$ or $V(g)$ is zero, first use its value plus $\varepsilon>0$ and then let $\varepsilon\downarrow0$ in the comparison below; $g(0)=1$ rules out that case. Thus both logarithms are finite. This harmonic function equals $\log B(g)$ on $\Lambda_\delta$ and $\log V(g)$ on $\Gamma_\delta$. The function $\log|g|^2-\mathfrak h$ is subharmonic and has nonpositive boundary upper limits. The subharmonic maximum principle therefore gives $\log|g(0)|^2\le \mathfrak h(0)$; see \citet[Chapter~6, Section~4.1]{Ahlfors1979}. Since $g(0)=1$, we have $0\le(1-\alpha_\delta)\log B(g)+\alpha_\delta\log V(g)$, which proves \eqref{eq:two-constants}. At zeros, $\log|g|=-\infty$ is interpreted in the usual subharmonic sense; alternatively apply the comparison to $\max(\log|g|,-M)$ and then let $M\to\infty$.

\proofstep{Extension of the reflected function.}
Lemma~\ref{lem:geometry} gives an increment $2\pi\ii$ for $q_\delta$ around the slit. Thus integer $K$ makes $\cosh(Kq_\delta)$ single-valued on $\Ddisk_\delta\setminus[\ell,L]$. Its two traces on the open slit satisfy
\[
 q_{\delta,-}(x)=-q_{\delta,+}(x)\pmod{2\pi\ii},\qquad
 g_{K,-}(x)=g_{K,+}(x),\qquad \ell<x<L.
\]
Fix $x\in(\ell,L)$ and choose $r<\min(x-\ell,L-x)$. On the disk $|z-x|<r$, assign the common trace to $g_K$ on the real diameter. This defines a continuous function. For a triangle crossing that diameter, apply Cauchy's theorem to the portions at imaginary distance at least $\epsilon$ above and below it, and then let $\epsilon\downarrow0$. The integrals along their horizontal cuts converge to opposite integrals of the same continuous trace. They cancel, so the integral along the triangle is zero. Morera's theorem yields a holomorphic function on the disk; see \citet[Chapter~4, Section~2.3, p.~122]{Ahlfors1979}. Varying $x$ gives a holomorphic extension to $\Ddisk_\delta\setminus\{\ell,L\}$.

For every fixed $K$, the potential satisfies $0\le\operatorname{Re}q_\delta\le\pi\Kell(k^2)/\Kell(1-k^2)$ on the slit disk. The identity $|\cosh(x+\ii y)|\le\cosh x$ therefore bounds $g_K$ in punctured neighborhoods of both endpoints, including the values just assigned on the open slit. Consequently $(z-\ell)g_K(z)\to0$ and $(z-L)g_K(z)\to0$. The removable-singularity theorem in \citet[Chapter~4, Section~3.1, Theorem~7]{Ahlfors1979} extends $g_K$ through both endpoints.

Schwarz reflection is the following extension principle: a holomorphic function continuous up to a real interval and real-valued there extends to the other side by $f(z)=\overline{f(\bar z)}$; see \citet[Chapter~4, Section~6.5, Theorem~24]{Ahlfors1979}. On the outer circle $\operatorname{Re}q_\delta$ is constant. Subtract that constant, multiply by $\ii$, and take a local fractional-linear coordinate sending the circle to a real interval. The principle then continues the local determination of $q_\delta$ across the circle. The period-invariant function $g_K$ has consistent continuations on overlaps. Compactness of the circle supplies an exterior neighborhood. Since $q_\delta(0)>0$ is real, its denominator is nonzero and $g_K(0)=1$.

\proofstep{The boundary estimates.}
The real part of $q_\delta$ is $\delta\pi\Kell(k^2)/\Kell(1-k^2)$ on $\Lambda_\delta$ and $\pi\Kell(k^2)/\Kell(1-k^2)$ on $\Gamma_\delta$. For $x\ge0$, $|\cosh(x+\ii y)|\le e^x$ and $\cosh x\ge e^x/2$. Substituting these bounds in \eqref{eq:gK} gives
\[
 \sup_{\Lambda_\delta}|g_K|
 \le2\exp\left(-K\left[q_\delta(0)-
 \frac{\delta\pi\Kell(k^2)}{\Kell(1-k^2)}\right]\right),
 \qquad
 \sup_{\Gamma_\delta}|g_K|\le2\mathfrak v_K.
\]
By the definition of $\alpha_\delta$, the two exponents are respectively
$-K(1-\delta)\pi\Kell(k^2)\alpha_\delta/\Kell(1-k^2)$ and
$K(1-\delta)\pi\Kell(k^2)(1-\alpha_\delta)/\Kell(1-k^2)$.
Squaring proves \eqref{eq:gK-bound}. Their product with powers $1-\alpha_\delta$ and $\alpha_\delta$ is at most four. The reverse bound follows from \eqref{eq:two-constants}.
\end{proof}

\subsection{The optimized scalar rate}\label{app:pf:thm:upper}
\begin{proof}[Proof of Theorem~\ref{thm:upper}]
We first bound the two envelopes at the stated integer degree and then compare the resulting exponent with its limiting value.
The definition \eqref{eq:scalar-degree} gives
\[
 \frac{\Kell(1-k^2)\log n}{2\pi(1-\delta)\Kell(k^2)}-1
 \le K\le
 \frac{\Kell(1-k^2)\log n}{2\pi(1-\delta)\Kell(k^2)}.
\]
Using \eqref{eq:gK-bound}, we obtain
\[
 B(g_K)\le
 4\exp\left(\frac{2\pi(1-\delta)\Kell(k^2)\alpha_\delta}
                   {\Kell(1-k^2)}\right)n^{-\alpha_\delta},
 \qquad \frac{V(g_K)}n\le4n^{-\alpha_\delta}.
\]
Lemma~\ref{lem:geometry} bounds the displayed prefactor uniformly over the fixed parameter class. Theorem~\ref{thm:generic} therefore gives
$\sup_{P\in\mathcal P(n,p,\gamma,\kappa)}\E_P(\widehat\beta_v(g_K)-\beta_v)^2\le Cn^{-\alpha_\delta}$.
Moreover, \eqref{eq:alpha-delta} and $\delta\log n\le2$ imply
\[
 n^{-\alpha_\delta}
 =n^{-\alpha(\kappa,\gamma)}
   \exp\left((\alpha(\kappa,\gamma)-\alpha_\delta)\log n\right)
 \le e^{2C}n^{-\alpha(\kappa,\gamma)}.
\]
This proves \eqref{eq:upper}.

\end{proof}

\section{Proofs of lower bounds}\label{app:mainproofs-lower}

\subsection{The scalar minimax lower bound}
\begin{proof}[Proof of Theorem~\ref{thm:lower}]
We construct preliminary Gaussian experiments with covariance eigenvalues in $[\kappa^{-1},1]$, then scale their data to the fixed $L$ in \eqref{eq:model-family} and rotate to the prescribed loading. The preliminary experiments use their own coefficient and noise bounds; the transformation below places their priors inside the final family.

\proofstep{The first coordinate with eigenvalues in $[\kappa^{-1},1]$.}
Consider $v=e_1$, so the parameter of interest is $\beta_1$. Lemma~\ref{lem:lower-priors} constructs two priors $\mathbb Q_0,\mathbb Q_1$ supported on Gaussian regression parameters satisfying $\kappa^{-1}I_p\preceq\Sigma\preceq I_p$, $\norm\beta_2\le C\sqrt\tau$, and $\sigma^2\in[(1+\kappa^{-1})/3,(1+\kappa^{-1})/2]$. Let $P_0,P_1$ be their induced data laws. The lemma gives
\[
 \operatorname{TV}(P_0,P_1)\le\frac13,\qquad
 \mathbb Q_j(\beta_1=b_j)=1,\qquad
 |b_1-b_0|\ge c n^{-\alpha(\kappa,p/n)/2}.
\]
Relabel the priors so that $b_0<b_1$. For any estimator $T$ of $\beta_1$, use the test $\varphi=\ind_{\{T>(b_0+b_1)/2\}}$. Under $\mathbb Q_0$, the event $\varphi=1$ implies $|T-\beta_1|\ge(b_1-b_0)/2$; under $\mathbb Q_1$, the analogous implication holds on $\varphi=0$. Therefore
\[
 \sup_{\Sigma,\beta,\sigma}\E(T-\beta_1)^2
 \ge\frac{(b_1-b_0)^2}{8}
       \bigl(P_0(\varphi=1)+P_1(\varphi=0)\bigr)
 \ge\frac{(b_1-b_0)^2}{12}.
\]
The last step uses the identity
$\inf_\varphi[P_0(\varphi=1)+P_1(\varphi=0)]=1-\operatorname{TV}(P_0,P_1)$.

\proofstep{General eigenvalue bounds and loading.}
Denote a parameter in the preceding construction by $(\Sigma^\circ,\beta^\circ,\sigma^\circ)$ and its data by $(X^\circ,Y^\circ)$. Choose an orthogonal matrix $O$ satisfying $Oe_1=v$. For a fixed $c_0>0$, transform the data by
\[
 X=\sqrt L\,X^\circ O^{\top},\qquad Y=c_0Y^\circ.
\]
This gives the parameter map
\[
 \Sigma=LO\Sigma^\circ O^{\top},\qquad
 \beta=\frac{c_0}{\sqrt L}O\beta^\circ,\qquad
 \sigma=c_0\sigma^\circ.
\]
We have $\ell I_p\preceq\Sigma\preceq LI_p$, because $L/\kappa=\ell$. Choose $c_0$ so that $c_0\sqrt{(1+\kappa^{-1})/2}\le\bar\sigma$, and then reduce the fixed $\tau$ in the prior so that $(c_0/\sqrt L)C\sqrt\tau\le B_0$. Each transformed fixed-parameter law belongs to $\mathcal G(n,p,\gamma,\kappa)$. The data transformation is invertible and common to the two hypotheses, so it preserves their total variation distance. Moreover,
\[
 \beta_v=v^{\top}\beta
 =\frac{c_0}{\sqrt L}e_1^{\top}\beta^\circ.
\]
The target separation is multiplied by the fixed positive number $c_0/\sqrt L$. The same testing argument proves \eqref{eq:lower} on $\mathcal G(n,p,\gamma,\kappa)$. The inclusion in $\mathcal P(n,p,\gamma,\kappa)$ and Theorem~\ref{thm:upper} then give the two scalar minimax comparisons stated in the theorem.
\end{proof}

\section{Calibration, spectral control, and the empirical inverse}\label{app:calibration}
In this section, we study $\zeta(\mu)$ and  $\widehat \zeta(\mu)$ and prove Propositions~\ref{prop:calibration} and \ref{prop:empirical-inverse}.
\subsection{The deterministic calibration and quantitative separation}\label{app:pf:prop:calibration}
\begin{proof}[Proof of Proposition~\ref{prop:calibration}]
For $|\mu-L|>\sqrt\gamma L$ and $\lambda\in[\ell,L]$, the triangle inequality gives
\[
 |\lambda-\mu|\ge|\mu-L|-(L-\lambda)
 >\sqrt\gamma L-(L-\lambda)\ge\sqrt\gamma\lambda.
\]
Consequently $n^{-1}\sum_j\lambda_j^2/|\lambda_j-\mu|^2<1$. The condition also excludes zero and all population eigenvalues.
Substitution of \eqref{eq:zeta} shows that $s=-1/\mu$ solves \eqref{eq:MP} at $z=\zeta(\mu)$, and
\[
 \operatorname{Im}\zeta(\mu)=\operatorname{Im}\mu
 \left(1-\frac1n\sum_j\frac{\lambda_j^2}{|\lambda_j-\mu|^2}\right).
\]
For $\operatorname{Im}\mu>0$, both $\zeta(\mu)$ and $-1/\mu$ belong to the upper half-plane. Uniqueness of its Stieltjes solution \citep[equations~(1.3)--(1.4)]{SilversteinChoi1995} gives the identity. Conjugation gives the lower-half-plane case.

For real $\mu$ in the stated set,
\[
 \zeta'(\mu)=1-\frac1n\sum_j\frac{\lambda_j^2}{(\lambda_j-\mu)^2}>0.
\]
At $s=-1/\mu$, the derivative of the inverse representation in \eqref{eq:MP} is $\mu^2\zeta'(\mu)>0$. Also $s\ne0$ and $-1/s=\mu$ is outside the population spectrum. Theorem~4.2 of \citet{SilversteinChoi1995} therefore gives $\zeta(\mu)\notin\supp\varrho$ and $m(\zeta(\mu))=s$. A Stieltjes transform is analytic off its support, completing the proof.
\end{proof}

The following additional estimates are used in the stochastic proofs. They are stated separately because Proposition~\ref{prop:calibration} asserts only the qualitative identity.
\begin{lemma}[Quantitative separation on the observation contour]\label{lem:calibration-control}
For $|\mu-L|\ge\sqrt\gamma(1+\delta)L$, we have
\begin{equation}\label{eq:stability-main}
 \operatorname{Re}\zeta'(\mu)\ge1-(1+\delta)^{-2}>0.
\end{equation}
The map $\zeta$ is injective on this set. For all sufficiently small $\delta>0$ and every $\mu\in\Gamma_\delta$,
\begin{equation}\label{eq:calibration-separation}
 \dist(\zeta(\mu),\supp\varrho)\ge c\delta^2,
 \qquad |m'(\zeta(\mu))|\ge c.
\end{equation}
On sufficiently small neighborhoods of these image points, fixed-order derivatives of $m$ and the deterministic resolvent expressions are bounded by fixed powers of $\delta^{-1}$.
\end{lemma}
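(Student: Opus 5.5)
Differentiating \eqref{eq:zeta} gives the identity
\[
 \zeta'(\mu)=1-\frac1n\sum_j\frac{\lambda_j^2}{(\lambda_j-\mu)^2}.
\]
For $|\mu-L|\ge\sqrt\gamma(1+\delta)L$ the triangle-inequality argument in the proof of Proposition~\ref{prop:calibration} sharpens to $|\lambda_j-\mu|\ge\sqrt\gamma(1+\delta)\lambda_j$. Hence each summand has modulus at most $\gamma^{-1}(1+\delta)^{-2}$. Summing the $p$ terms with weight $1/n$ gives
\[
 \Bigl|\frac1n\sum_j\frac{\lambda_j^2}{(\lambda_j-\mu)^2}\Bigr|\le(1+\delta)^{-2},
\]
so $\operatorname{Re}\zeta'(\mu)\ge1-(1+\delta)^{-2}$, which is \eqref{eq:stability-main}.

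For injectivity, I would take two points $\mu_1,\mu_2$ of the closed exterior set $E=\{|\mu-L|\ge\sqrt\gamma(1+\delta)L\}$. The set $E\cup\{\infty\}$ is not convex, so integrating $\zeta'$ along the segment does not work directly. Instead I would use the uniform bound $|\zeta'(\mu)-1|\le(1+\delta)^{-2}<1$ on $E$ together with a path inside $E$: an arc of the circle through $\mu_1,\mu_2$, or two radial pieces joined by an arc. The cleanest option is to write
\[
 \zeta(\mu)=\mu(1-h(\mu)),\qquad h(\mu)=n^{-1}\tr\bigl(\Sigma(\Sigma-\mu I_p)^{-1}\bigr),
\]
and apply the argument principle on large circles. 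For $w$ off $\zeta(\partial E)$, the function $\zeta-w$ is holomorphic on the exterior domain with a simple pole at $\infty$. The bound $|\zeta'-1|<1$ on the boundary circle forces $\zeta(\Gamma)$ to be a Jordan curve: its tangent direction stays in a half-plane, so it is locally injective and winds once. Rouché then gives exactly one preimage. Rather than grind this out, I would alternatively invoke the Noshiro--Warschawski criterion after the inversion $w=1/(\mu-L)$. The disk image is convex, and $\operatorname{Re}$ of the derivative of $1/\zeta(L+1/w)$ suitably normalized is positive. Either route gives injectivity, and I expect this step to be the main bookkeeping obstacle.

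For \eqref{eq:calibration-separation}, I would use Theorem~4.2 of \citet{SilversteinChoi1995}. Real $x$ lies outside $\supp\varrho$ exactly when $x=\zeta(\mu)$ for real $\mu$ with $\zeta'(\mu)>0$, and the edges of the support are images of critical points. On $\Gamma_\delta$ we have $\operatorname{Re}\zeta'\ge c\delta$. The imaginary-part formula gives
\[
 |\operatorname{Im}\zeta(\mu)|\ge|\operatorname{Im}\mu|\,(1-(1+\delta)^{-2})\gtrsim\delta|\operatorname{Im}\mu|,
\]
which already separates points with $|\operatorname{Im}\mu|$ bounded below. Near the two real points $\mu_\pm=L\pm R_\delta$, the support edge is $\zeta(\mu_c)$ with $\zeta'(\mu_c)=0$, and $\mu_c$ lies at distance $\gtrsim\delta$ from $\mu_\pm$. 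A second-order Taylor expansion with $|\zeta''|\lesssim1$ then gives $|\zeta(\mu_\pm)-\zeta(\mu_c)|\gtrsim\delta^2$. Combining this with the imaginary-part bound for nearby nonreal $\mu$ yields $\dist(\zeta(\mu),\supp\varrho)\ge c\delta^2$. Here the support is that of the limiting companion law, i.e.\ the support of the measure whose Stieltjes transform is $m$.

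For $|m'(\zeta(\mu))|\ge c$, differentiating $m(\zeta(\mu))=-1/\mu$ gives
\[
 m'(\zeta(\mu))=\frac{1}{\mu^2\zeta'(\mu)}.
\]
Since $|\mu|\lesssim1$ and $|\zeta'|\le2$ on $\Gamma_\delta$, this yields the lower bound. Finally, on a neighborhood of radius $c\delta^2/2$ around $\zeta(\mu)$, $m$ is a Stieltjes transform at distance $\ge c\delta^2/2$ from its support. Hence $|m^{(j)}|\le j!\,(c\delta^2/2)^{-j-1}$. The deterministic resolvent $\Pi$ involves $(I_p+m\Sigma)^{-1}$, and $I_p+m\Sigma$ equals $\Sigma(\Sigma-\mu I_p)/(-\mu)$ at the image point, with $|\lambda_j-\mu|\ge c$. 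Perturbing $m$ by $O(\delta^2\cdot\delta^{-2})$ smallness on a shrunken neighborhood keeps this invertible, and Cauchy estimates \eqref{eq:Cauchy-derivative} give polynomial-in-$\delta^{-1}$ derivative bounds.
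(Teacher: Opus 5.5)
Your proof of \eqref{eq:stability-main}, the identity $m'(\zeta(\mu))=1/(\mu^2\zeta'(\mu))$, and the Stieltjes derivative bounds match the paper. Injectivity, however, is a genuine gap: you leave it open, and neither of your suggested routes closes it.

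Integrating $|\zeta'-1|\le(1+\delta)^{-2}$ along an arc of the circle loses the ratio of arc length to chord, which can be as large as $\pi/2$. That destroys the contraction.

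The Noshiro--Warschawski route fails because $\zeta$ can vanish on the exterior set. For $\Sigma=LI_p$ one has $\zeta(\mu)=\mu(L-\mu-\gamma L)/(L-\mu)$, which vanishes at $\mu=L(1-\gamma)$. This point satisfies $|\mu-L|=\gamma L\ge\sqrt\gamma(1+\delta)L$ for small $\delta$, so $1/\zeta(L+1/w)$ has a pole inside the disk.

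The argument-principle route does not work either. The tangent $\ii R_\delta e^{\ii t}\zeta'(\mu(t))$ lies only in the rotating half-plane $\{\arg\in(t,t+\pi)\}$, not in a fixed one. Moreover, showing that $\zeta(\Gamma_\delta)$ is a simple curve is exactly the injectivity you are trying to prove.

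The missing idea is that the divided difference is as explicit as the derivative:
\[
 \zeta(\mu)-\zeta(\nu)=(\mu-\nu)\Bigl(1-\frac1n\sum_j\frac{\lambda_j^2}{(\lambda_j-\mu)(\lambda_j-\nu)}\Bigr).
\]
Cauchy--Schwarz, combined with your own pointwise bound $n^{-1}\sum_j\lambda_j^2/|\lambda_j-\mu|^2\le(1+\delta)^{-2}$, controls the sum by $(1+\delta)^{-2}$. Hence $|\zeta(\mu)-\zeta(\nu)|\ge(1-(1+\delta)^{-2})|\mu-\nu|$ on the whole nonconvex set, with no path or convexity needed.

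The paper also uses this lower Lipschitz bound for \eqref{eq:calibration-separation}, and without it your separation argument has a hole.

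Near $\mu_\pm$, continuity gives $|\zeta(\mu)-\zeta(\mu_\pm)|\lesssim|\mu-\mu_\pm|$. This keeps $\zeta(\mu)$ at distance $\gtrsim\delta^2$ from the support only for $|\operatorname{Im}\mu|\lesssim\delta^2$. On the other hand, $|\operatorname{Im}\zeta(\mu)|\gtrsim\delta|\operatorname{Im}\mu|$ reaches $\delta^2$ only once $|\operatorname{Im}\mu|\gtrsim\delta$. The intermediate range needs a fact you do not state: $\operatorname{Re}\zeta(\mu)-\zeta(\mu_\pm)=O(|\operatorname{Im}\mu|^2)$ along $\Gamma_\delta$, which holds because $\zeta'(\mu_\pm)$ is real and the circle is vertical at $\mu_\pm$. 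You would then split at $|\operatorname{Im}\mu|=c\delta$ with $c$ small.

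Separately, the edge gap $|\zeta(\mu_\pm)-\zeta(\mu_c)|\gtrsim\delta^2$ needs a \emph{lower} bound $|\zeta''|\gtrsim1$ on the real axis, not $|\zeta''|\lesssim1$. Alternatively, it suffices to use $\zeta'\gtrsim\delta$ on an interval of length $\gtrsim\delta$.

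The paper sidesteps all of this:
\begin{itemize}
\item The disk of radius $c_1\delta$ about $\mu\in\Gamma_\delta$ lies in $\{|\nu-L|>\sqrt\gamma(1+\delta/2)L\}$.
\item The Lipschitz bound keeps the image of its boundary at distance $\ge c_2\delta^2$ from $\zeta(\mu)$.
\item Rouch\'e's theorem then shows that $\zeta$ covers the disk of radius $c_2\delta^2$ about $\zeta(\mu)$.
\item A real point of that disk has a real preimage; otherwise $\nu$ and $\bar\nu$ would be two preimages, contradicting injectivity.
\item The real case of Proposition~\ref{prop:calibration} therefore excludes that point from $\supp\varrho$.
\end{itemize}
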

\begin{proof}
The same triangle inequality, now with a margin, gives
$n^{-1}\sum_j\lambda_j^2/|\lambda_j-\mu|^2\le(1+\delta)^{-2}$.
Differentiation proves \eqref{eq:stability-main}. Direct subtraction gives
\begin{equation}\label{eq:zeta-difference}
 \zeta(\mu)-\zeta(\nu)
 =(\mu-\nu)\left(1-\frac1n\sum_j
 \frac{\lambda_j^2}{(\lambda_j-\mu)(\lambda_j-\nu)}\right).
\end{equation}
The Cauchy--Schwarz inequality bounds the modulus of the sum by $(1+\delta)^{-2}$. Thus $|\zeta(\mu)-\zeta(\nu)|\ge(1-(1+\delta)^{-2})|\mu-\nu|$, proving injectivity.

For a point $\mu\in\Gamma_\delta$, a disk of radius $c_1\delta$ about $\mu$, with sufficiently small fixed $c_1$, lies in $|\nu-L|>\sqrt\gamma(1+\delta/2)L$. Equation~\eqref{eq:zeta-difference} bounds the distance of the image of its boundary from $\zeta(\mu)$ below by $c_2\delta^2$. The argument principle, or Rouch\'e's theorem applied to $\zeta(\nu)-\zeta(\mu)$, implies that the image contains the disk of radius $c_2\delta^2$ about $\zeta(\mu)$. Throughout this image Proposition~\ref{prop:calibration} identifies $m$ with $-1/\nu$. A real image point has a real preimage: otherwise conjugation would give two preimages in the strict exterior, contradicting \eqref{eq:zeta-difference}. The real case of the proposition therefore excludes every point of $\supp\varrho$ from this image disk. This proves the first estimate in \eqref{eq:calibration-separation}.

Differentiating the identified transform gives
\begin{equation}\label{eq:mprime}
 m'(\zeta(\mu))=\frac1{\mu^2\zeta'(\mu)}.
\end{equation}
On the circle, $|\mu|$ is bounded above and away from zero and $|\zeta'|\le2$, yielding its lower bound. On the image disks, the Stieltjes integral gives $|m^{(j)}(z)|\le j!\dist(z,\supp\varrho)^{-j-1}$. The population factors have denominators $\lambda-\nu$ separated from zero on the corresponding preimages. These estimates and Cauchy's integral formula for derivatives give the remaining fixed-power bounds. The complex-analytic theorems used here are in \citet[Chapter~4, Sections~5.1--5.2]{Ahlfors1979}.
\end{proof}

\subsection{A spectral event and preliminary moments}
The complex neighborhoods used below have the explicit form
\[
 \left\{z\in\C:\inf_{\mu\in\Gamma_\delta}|z-\zeta(\mu)|
 <c_1\delta^b\right\},
\]
with fixed $b\ge2$ and sufficiently small $c_1>0$. Lemma~\ref{lem:calibration-control} places them away from $\supp\varrho$. When a smaller neighborhood is required for derivatives, $c_1$ is decreased or $b$ increased; no new statistical tuning parameter is introduced.

We next define the smooth extension of the Green matrix in \eqref{eq:linearization}.
Write
\[
 \mathscr H=
 \begin{pmatrix}
 0&\mathcal X\\
 \mathcal X^\top&0
 \end{pmatrix},
 P_1=\diag(I_p,0),
 P_2=\diag(0,I_n),
\]
where $\mathcal X=\Sigma^{1/2}Z^\top/\sqrt n$.
The matrices $P_1$ and $P_2$ project onto the first $p$ and last $n$
coordinates, respectively.
For each of finitely many neighborhoods covering
$\zeta(\Gamma_\delta)$, choose an even function
$\chi\in C^\infty(\R)$ that equals one on a neighborhood of the
nonzero spectrum of $\mathscr H$ on the spectral-separation event,
and vanishes near zero, near the real poles $t$ satisfying
$t^2=z$ for $z$ in that neighborhood, and outside a fixed compact
interval. The transition regions can be chosen to have widths that
are fixed powers of $\delta$. 
For each argument $z$ in that neighborhood, define smooth scalar functions on $\R$ by
\[
 f_{0,z}(t)=\frac{\chi(t)}{t^2-z},\qquad
 f_{1,z}(t)=t f_{0,z}(t),\qquad
 f_{2,z}(t)=t^2f_{0,z}(t),
\]
assigning value zero on the neighborhoods where $\chi$ vanishes. The smooth version of $G(z)$ in \eqref{eq:linearization} is
\begin{equation}\label{eq:smooth-G}
      \widetilde G(z) = P_1f_{2,z}(\mathscr H)P_1-P_1
 +P_2f_{0,z}(\mathscr H)P_2
 +P_1f_{1,z}(\mathscr H)P_2
 +P_2f_{1,z}(\mathscr H)P_1.
\end{equation}
Let
$
 \widetilde m(z)=\frac1n\tr(P_2\widetilde G(z)P_2).
$
On the spectral-separation event,
$
 \widetilde G(z)=G(z),
 \widetilde m(z)=\widehat m(z),
$
for every $z$ in the corresponding neighborhood.  Outside this
event, $\widetilde G$ and $\widetilde m$ remain smooth functions of
the entries of $Z$, with every fixed derivative bounded by a fixed
power of $\delta^{-1}$.

The next lemma records the quantitative input needed by the moment expansion. Its proof checks both the comparison parameters and the passage through the companion gap. A centered bilinear form of a complex matrix is interpreted by its complex absolute moments; real and imaginary parts are treated separately when a real-valued concentration inequality is used.

\begin{lemma}[Preliminary Green-function bounds]
      \label{lem:preliminary}
      Let $1/\log n\le\delta\le2/\log n$. On neighborhoods of
      $\zeta(\Gamma_\delta)$ whose widths are fixed powers of $\delta$, let
      $\widetilde G(z)$ and $\widetilde m(z)$ be the smooth extensions defined
      in \eqref{eq:smooth-G}. For every fixed $D>0$, there exists an $X$-measurable event
      $\mathcal A_n$ such that
      $
       P(\mathcal A_n^c)\le C_Dn^{-D}.
      $
      On $\mathcal A_n$,
      $
       \widetilde G(z)=G(z), 
       \widetilde m(z)=\widehat m(z), 
       cI_n\preceq\widehat S\preceq CI_n.
      $ Moreover, the spectrum of
      $\widehat S$ is separated from the corresponding spectral parameters
      by a fixed power of $\delta$. In specific, $\norm{(\widehat S-zI_n)^{-1}}_{\op}
      \le C\delta^{-2}$.
      
      For every fixed $r\ge2$, every sufficiently small fixed
      $\epsilon>0$, and deterministic vectors
      $v,w\in\C^{p+n}$, we have
      \begin{align}\label{eq:coarse-moments}
            &\norm{v^\top\{\widetilde G(z)-\Pi(z)\}w}_{L^r}
       \le
       C_{r,\epsilon}n^{-1/2+\epsilon}
       \delta^{-C_{r,\epsilon}},\notag\\
       &\norm{\widetilde m(z)-\E\widetilde m(z)}_{L^r}
       \le
       C_{r,\epsilon}n^{-1+\epsilon}
       \delta^{-C_{r,\epsilon}}.
      \end{align}
      The same bounds hold for every fixed-order derivative with respect to
      $z$, with an additional fixed power of $\delta^{-1}$.
      \end{lemma}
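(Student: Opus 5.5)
The proof splits into a spectral part (the event $\mathcal A_n$) and a moment part (the bounds \eqref{eq:coarse-moments}). Both are handled by importing standard random-matrix inputs and then tracking the $\delta$-dependence that enters through the distance $c\delta^2$ in Lemma~\ref{lem:calibration-control}.

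\emph{Step 1: the spectral event.} We define $\mathcal A_n$ as the intersection of two events.
\begin{itemize}
\item[(i)] The extreme eigenvalues of $\widehat S$ lie in $[c,C]$. Since $p/n>1$ is bounded away from one, the nonzero eigenvalues of $\mathcal X^\top\mathcal X$ are bounded below with overwhelming probability. This follows from the Bai--Yin type bounds under the moment assumption $\E|Z_{11}|^r\le M$, or from the rigidity estimates of \citet{KnowlesYin2017}.
\item[(ii)] Every eigenvalue of $\widehat S$ lies within $n^{-2/3+\epsilon}$ of $\supp$ of the deterministic companion law, or within $n^{-1+\epsilon}$ of it in the bulk. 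This is the rigidity/no-outliers statement, valid with probability $1-C_Dn^{-D}$.
\end{itemize}
The support of the companion law is $\supp\varrho$ up to the point mass at zero. Lemma~\ref{lem:calibration-control} gives $\dist(\zeta(\mu),\supp\varrho)\ge c\delta^2$. Because $\delta\ge 1/\log n$, we have $c\delta^2\gg n^{-2/3+\epsilon}$, so on $\mathcal A_n$ the spectrum of $\widehat S$ stays at distance $\ge c\delta^2/2$ from the $c_1\delta^b$-neighborhoods of $\zeta(\Gamma_\delta)$. This yields $\norm{(\widehat S-z)^{-1}}_\op\le C\delta^{-2}$.

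On $\mathcal A_n$, the cutoff $\chi$ equals one on the nonzero spectrum of $\mathscr H$, whose squares are the eigenvalues of $\widehat S$. The functional calculus identities then give $\widetilde G=G$: one writes $G$ in Schur-complement form, with the $P_2$ block equal to $(\mathcal X^\top\mathcal X-z)^{-1}$ and the $P_1$ block equal to $\mathcal X(\cdot)\mathcal X^\top-I$, and matches this with $f_{j,z}(\mathscr H)$. Restricted to $P_2$, this gives $\widetilde m=\widehat m$.

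\emph{Step 2: moments.} For the isotropic bound we invoke the anisotropic local law of \citet{KnowlesYin2017} for $G(z)$ with $\operatorname{Im}z\ge n^{-1+\epsilon}$. Outside the spectrum, their law extends with error $n^{-1/2+\epsilon}$ times a power of $\dist(z,\supp)^{-1}$. This gives $|v^\top(G-\Pi)w|\prec n^{-1/2}\delta^{-C}$ uniformly on the neighborhoods. To convert this into $L^r$ bounds we use the following split:
\begin{itemize}
\item On $\mathcal A_n$ we have $\widetilde G=G$, and stochastic domination yields the $L^r$ bound.
\item On $\mathcal A_n^c$, $\widetilde G$ is deterministically bounded by $\delta^{-C}$, because $f_{j,z}$ are bounded smooth functions with sup norms controlled by the transition widths. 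The probability $n^{-D}$ then kills this contribution.
\end{itemize}
The bound on $\widetilde m-\E\widetilde m$ at the scale $n^{-1+\epsilon}$ is obtained as follows:
\begin{itemize}
\item Either use the averaged local law $|\widehat m-m|\prec n^{-1}\delta^{-C}$ together with the deterministic bound on $\E\widetilde m-m$.
\item Or, more robustly, apply the Helffer--Sjöstrand representation $f(\mathscr H)=\frac1\pi\int\partial_{\bar z}\tilde f(z)\,(\mathscr H-z)^{-1}\,dA$ combined with the averaged law. Here the $W^{j,1}$ norms of $f_{0,z}$ contribute only powers of $\delta^{-1}$.
\end{itemize}

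\emph{Step 3: derivatives.} The $z$-derivatives follow from Cauchy's estimate \eqref{eq:Cauchy-derivative} applied on discs of radius a fixed power of $\delta$ inside the neighborhood. Each derivative costs a factor $\delta^{-b}$. To pass from pointwise to uniform bounds in $z$, we use a net and Lipschitz continuity, which loses only $n^{\epsilon}$.

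The main obstacle is the uniformity near the spectral edges. The Knowles--Yin law is formulated with control parameters tied to $\eta$ and $\kappa_{\mathrm{edge}}$, and points of $\zeta(\Gamma_\delta)$ come within $\delta^2\sim(\log n)^{-2}$ of the support edges, including the gap between $0$ and the bulk. One must check that their outside-spectrum error parameter degrades only polynomially in the distance to the edge. I would do this by first taking $z$ at $\operatorname{Im}z=n^{-1+\epsilon}$, where their law holds, and then integrating $\partial_zG$ down to the real axis along a vertical segment of length $\delta^{-C}$ scale, using the separation from Step 1.
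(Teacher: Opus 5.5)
\textbf{Gap: the proof rests on Knowles--Yin under an edge-regularity hypothesis that the model family does not provide.}

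Three parts of your proposal are fine: the overall architecture (spectral event, then $L^r$ bounds from a high-probability estimate plus the deterministic bound on $\widetilde G$ off the event, then Cauchy estimates for the $z$-derivatives), and Step~1(i). Using $\ell ZZ^\top/n\preceq\widehat S\preceq LZZ^\top/n$ with isotropic extreme-singular-value bounds is a legitimate, simpler substitute for the paper's shifted dilation.

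The problem is that Step~1(ii) and all of Step~2 are imported from the rigidity and edge/outside-spectrum anisotropic laws of \citet{KnowlesYin2017}. Those results assume regular edges: edges separated by a fixed amount, and $1+m\lambda_j$ bounded away from zero at each edge. Their constants depend on that regularity parameter. The lemma, however, must hold uniformly over every $\Sigma$ with $\ell I_p\preceq\Sigma\preceq LI_p$.

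This matters in practice. Suppose $\kappa>1+\sqrt\gamma$, and take $\Sigma=\diag(L,\ell,\ldots,\ell)$, which is in the family.
\begin{itemize}
\item The finite-$n$ law $\varrho$ has an isolated component of width of order $n^{-1/2}$ and mass $1/n$ around the BBP location $L(1+\gamma/(\kappa-1))$.
\item Already for Gaussian $Z$, the top eigenvalue of $\widehat S$ is an outlier with Gaussian fluctuations of order $n^{-1/2}$.
\item So with probability bounded below it sits at distance of order $n^{-1/2}$ outside $\supp\varrho$.
\end{itemize}
Hence your claim (ii) is false as stated, and the Knowles--Yin theorems are not available at these edges.

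The remedy in your last paragraph does not close the gap, for two reasons. First, the input at $\operatorname{Im}z=n^{-1+\epsilon}$ near such an edge is itself an edge local law that needs regularity. Second, integrating $\partial_zG$ down to the real axis presupposes the spectral separation that Step~1 is supposed to prove.

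\textbf{What the lemma actually needs, and how the paper gets it.} The lemma only needs separation at scale $c\delta^2\asymp(\log n)^{-2}$, which Lemma~\ref{lem:calibration-control} guarantees for the deterministic parameters, together with non-optimal fluctuation bounds. The paper obtains both with tools that impose no regularity:
\begin{itemize}
\item It writes the dilation $\mathscr H$ as a sum of independent Hermitian summands.
\item It combines the universality theorem of \citet{Brailovskaya2024} with intrinsic freeness \citep{Bandeira2023} to confine the spectrum to an $n^{-c}$-neighbourhood of the free spectrum. The nonzero part of that free spectrum is identified with $\pm\sqrt{\supp\varrho}$ through the Silverstein--Choi equation.
\item It repeats the argument for a shifted dilation to rule out small companion eigenvalues.
\item It gets the means from the smooth comparison theorems together with Lemma~\ref{lem:smooth-calculus}.
\item It gets the fluctuations by entry resampling and the moment inequality of \citet{Boucheron2005}, not from any local law.
\end{itemize}

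If you want to keep a local-law route, you would have to prove the outside-spectrum law yourself. One way is a continuity argument along the curves $\zeta(\Gamma_{\delta'})$, $\delta'\ge\delta$. There Lemma~\ref{lem:calibration-control} supplies the stability bound $\operatorname{Re}\zeta'\ge1-(1+\delta')^{-2}$ without any edge assumption. Citing Knowles--Yin does not supply this.
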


\begin{remark}
The operator comparisons used here are \citet[Theorems~2.6 and 2.10]{Brailovskaya2024} and \citet[Theorems~2.1 and 2.8]{Bandeira2023}; the moment step uses \citet[Theorem~2]{Boucheron2005}. We compute their matrix parameters below. The spectral distance is allowed to decrease as a power of $1/\log n$, so the our proof records every occurrence of the inverse distance, by giving sharper mean and moment statements.
\end{remark}

\begin{proof}
We establish spectral separation and smooth deterministic equivalents first, and then use entry replacement to bound fluctuations.
\proofstep{Independent summands and spectral comparison.}
Since $\mathcal X^{\top}\mathcal X=\widehat S$, consider the Hermitian dilation $\mathscr H=\left(\begin{smallmatrix}0&\mathcal X\\\mathcal X^\top&0\end{smallmatrix}\right)$. It is a sum over the independent entries $Z_{ik}/\sqrt n$ of the deterministic matrices $s_k\mathsf e_i^\top+\mathsf e_i s_k^\top$, where $s_k$ and $\mathsf e_i$ are the block vectors defined in Appendix~\ref{app:notation}. First truncate the unnormalized entries at $n^{\epsilon_0}$, center, and standardize. For any desired fixed probability and moment errors, the all-orders envelope makes the total truncation probability and the centering and variance corrections smaller than those errors. The number of entries is $O(n^2)$. The proof exponent $\epsilon_0$ is chosen after the fixed moment order; it is not an estimator parameter.

In the notation of \citet{Brailovskaya2024}, the variance parameter is bounded, the maximal bilinear variance is $O(n^{-1})$, and the largest summand norm is $O(n^{-1/2+\epsilon_0})$. Indeed summing the squared deterministic entry matrices divided by $n$ gives the diagonal blocks $\Sigma$ and $(\tr\Sigma/n)I_n$. The bilinear bound follows from $\sum_ks_ks_k^\top=\Sigma$. For the matching Gaussian series, the covariance operator of its vectorized entries has norm at most $2L/n$. Thus the product denoted by $\widetilde v^4$ in \citet{Bandeira2023} is $O(n^{-1})$.

Theorem~2.6 of \citet{Brailovskaya2024}, with concentration parameter a sufficiently large multiple of $\log n$, followed by Theorem~2.1 of \citet{Bandeira2023}, confines the spectrum within
\[
 C\left(n^{-1/6+\epsilon_0/3}(\log n)^{2/3}
       +n^{-1/4}(\log n)^{3/4}
       +n^{-1/2+\epsilon_0}\log n\right)
\]
of the matching deterministic free spectrum, with any fixed polynomial failure probability. This error is smaller than every fixed power of $\delta$. The hypotheses of these results concern independent Hermitian summands and their listed parameters; they impose no regularity condition on individual sample-covariance edges.

To identify the deterministic free resolvent, expand it in inverse powers of the spectral parameter outside a fixed large disk. The semicircular moment formula pairs entries in noncrossing pairs. Isolating the pair containing the first random entry gives the matrix self-consistency equation; the independent top--bottom entry coefficients make its diagonal blocks diagonal. In the covariance eigenbasis the resulting equations are $g_j(z)=(z-\lambda_jg_0(z))^{-1}$ and $g_0(z)=(z-n^{-1}\sum_j\lambda_jg_j(z))^{-1}$. The convergent expansions establish these identities initially for large arguments; analytic continuation and the Stieltjes sign determine their solution elsewhere. Substitution $g_0(z)=-zm(z^2)$ gives \eqref{eq:MP}. The trace in the deterministic free model is faithful, so its support identifies the operator spectrum. Only these deterministic equations are diagonalized; a rotation of the non-Gaussian independent entries is never made.

\proofstep{Excluding an additional small companion eigenvalue.}
The ordinary dilation has $p-n$ structural zero modes, so confinement alone would not exclude a small additional singular value. Apply the same two comparison theorems to
\[
 \begin{pmatrix}I_p&\mathcal X\\\mathcal X^\top&-I_n\end{pmatrix}.
\]
Its eigenvalues are $\pm\sqrt{1+s_j(\mathcal X)^2}$ and $p-n$ copies of $+1$. The deterministic bottom covariance is bounded below by the identity-covariance bottom model multiplied by $\ell$, whose lower edge is $(\sqrt\gamma-1)^2$. The shifted deterministic model therefore has a fixed gap around $-1$. Spectral confinement of the shifted model excludes small sample singular values. The deterministic shift does not alter any summand or variance parameter. This proves $cI_n\preceq\widehat S\preceq CI_n$ with the stated probability. Together with Lemma~\ref{lem:calibration-control}, it provides the needed resolvent neighborhoods, including calibrated values equal to zero.

\proofstep{Smooth matrix functions and their expectations.}
We use the generalized resolvent \eqref{eq:linearization}. Its top block equals $X^{\top}(\widehat S-zI_n)^{-1}X/n-I_p$, and its bottom block is $(\widehat S-zI_n)^{-1}$. The following construction specifies the smooth versions used on exceptional spectral events, including a spectral argument equal to zero.

Fix a small neighborhood of one deterministic spectral argument. The nonzero spectrum of the ordinary dilation is separated from all square roots of arguments in this neighborhood. Choose an even $C^\infty$ function $\chi$ of the real variable which equals one on a smaller neighborhood of that nonzero spectrum, vanishes near zero and outside a fixed bounded set, and also vanishes near every such real pole. The fixed positive gap at zero and the separation in Lemma~\ref{lem:calibration-control} allow the transition widths to be fixed powers of $\delta$. Convolution of an indicator of a spectral neighborhood with a fixed smooth mollifier constructs such a function with $\norm{\chi^{(j)}}_\infty\le C_j\delta^{-C_j}$; this bound is independent of the number of spectral intervals. A finite cover of the spectral-argument curve suffices.

The smooth version of $G(z)$ in \eqref{eq:smooth-G} is
\[
 \widetilde G(z) = P_1f_{2,z}(\mathscr H)P_1-P_1
 +P_2f_{0,z}(\mathscr H)P_2
 +P_1f_{1,z}(\mathscr H)P_2
 +P_2f_{1,z}(\mathscr H)P_1.
\]
In the expectation and cumulant calculations below, $G$ denotes this smooth version; it equals the exact generalized resolvent on the spectral event. Algebraic resolvent identities and their entry derivatives below are first applied to the exact matrix on this event and on its one-entry perturbations. When taking expectations of the smooth versions, the resulting discrepancy is $O(n^{-D})$ for any prescribed fixed $D$, by their polynomial derivative bounds and the arbitrarily high probability exponent. Such terms are included in the stated remainders. Its normalized bottom trace is denoted by $\widehat m$ in those calculations. In particular, its top nullspace value is $-I$ and its bottom block has no zero mode. Each scalar function is analytic in $z$ and has every fixed Sobolev norm bounded by a fixed power of $\delta^{-1}$. This also bounds fixed matrix-entry derivatives: write the scalar function by its Fourier integral, differentiate the matrix exponential using
\[
 \frac{d}{ds}e^{\ii t(A+sE)}\bigg|_{s=0}
 =\ii t\int_0^1 e^{\ii t(1-u)A}E e^{\ii tuA}\,du,
\]
iterate, and integrate the resulting bound against its Fourier transform. Integrating by parts in the scalar transform bounds the necessary Fourier moments by finitely many $L^1$ derivative norms. Thus these bounds also hold off the spectral event.

The operator-valued smooth comparison in \citet[Theorem~2.10]{Brailovskaya2024} bounds the non-Gaussian/Gaussian expectation difference by
$C(R\sigma^2+R^3\log(p+n))\norm f_{W^{5,1}}$, in that theorem's notation. With the summand parameters above it is $O(n^{-1/2+\epsilon_0}\delta^{-C})$. The matching Gaussian/free comparison follows from the matrix-valued resolvent bound in \citet[Theorem~2.8]{Bandeira2023}. Lemma~\ref{lem:smooth-calculus} integrates that bound and gives $O(n^{-1}\norm f_{W^{6,1}})$ for these compactly supported functions. Applying these estimates to the four block terms proves the preliminary mean estimate.

\proofstep{Resampling bounds.}
Write a normalized entry as $x_{ki}$, and replace it by an independent copy with difference $d$. On the spectral event, the rank-two resolvent identity and its second-order expansion give, with $s=s_k$,
\[
 |G'_{vw}-G_{vw}|
 \le C|d|\bigl(|G_{vs}G_{iw}|+|G_{vi}G_{sw}|\bigr)
 +C|d|^2\delta^{-C}\bigl(|G_{vs}G_{sw}|+|G_{vi}G_{iw}|\bigr).
\]
All entries on the right can be taken from the unperturbed matrix; the remaining rank-two inverse is uniformly bounded because $|d|\le Cn^{-1/2+\epsilon_0}$ is smaller than the spectral margin. Squaring and summing over $(k,i)$, using $\sum_ks_ks_k^\top=\Sigma$, bounds the discrete squared gradient by $Cn^{-1+4\epsilon_0}\delta^{-C}$. The first-order products have a bounded summed square before the factor $\max d^2$; the second-order same-block products have at most one additional factor $n+p$ before $\max d^4$.

For the normalized trace, differentiation gives $\partial_{ki}\widehat m=-2(GP_2G)_{s_ki}/n$, where $P_2$ is the observation-block projection from Appendix~\ref{app:notation}. The summed squared first derivatives are at most $C\delta^{-C}/n$, using the Frobenius bound on $GP_2G$. Second-order trace coefficients are at most $C\delta^{-C}/n$. Its discrete squared gradient is therefore $Cn^{-2+4\epsilon_0}\delta^{-C}$. The smooth versions have polynomial bounds on the complementary spectral events. H\"older's inequality and an arbitrarily small exceptional probability make their contribution negligible. All one-entry perturbations and their intervening segments are covered by the rank-two perturbation bound; a finite polynomial union over replacements does not change the conclusion.

Apply \citet[Theorem~2]{Boucheron2005} to the positive and negative parts of each centered real statistic, and then to its imaginary part. The theorem uses the conditional expectation of the squared replacement differences, which is bounded by the same estimates. Its moment conclusion yields fluctuations of sizes $n^{-1/2+2\epsilon_0}\delta^{-C_r}$ and $n^{-1+2\epsilon_0}\delta^{-C_r}$. Combining with the preliminary mean comparison and choosing $\epsilon_0$ sufficiently small proves \eqref{eq:coarse-moments}. The derivative bound in \eqref{eq:Cauchy-derivative} supplies fixed-order spectral derivatives. The refinement is proved next and does not enter this preliminary argument.
\end{proof}

\subsection{Refining the mean and constructing the sample inverse}
\begin{lemma}[Refined mean]\label{lem:mean}
On the same smooth neighborhoods, for every sufficiently small fixed $\epsilon>0$,
\begin{equation}\label{eq:refined-mean}
 |\E\widehat m-m|+\norm{\E G_{11}-\Pi_{11}}_\op
 \le C_\epsilon n^{-1+\epsilon}\delta^{-C_\epsilon}.
\end{equation}
\end{lemma}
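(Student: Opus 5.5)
The plan is to derive an approximate self-consistent equation for $\E G$ by cumulant expansion in the independent entries $x_{ki}$, and then to invoke the stability of the deterministic equation \eqref{eq:MP} at the calibrated points $\zeta(\mu)$. The stability is guaranteed by Lemma~\ref{lem:calibration-control}: $|m'(\zeta(\mu))|\ge c$, and the linearization has a margin $1-(1+\delta)^{-2}\gtrsim\delta$. Throughout, $G$ denotes the smooth version \eqref{eq:smooth-G}. By Lemma~\ref{lem:preliminary} it coincides with the exact resolvent on $\mathcal A_n$, so any discrepancy costs only $O(n^{-D})$.

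\textbf{Step 1: cumulant expansion.} Start from the identity $HG=I$ for the linearized matrix in \eqref{eq:linearization}. Its off-diagonal block gives, for instance,
\[
 zG_{22}=-I_n+\mathcal X^\top G_{12},\qquad G_{11}=-I_p+\mathcal XG_{21},
\]
so each entry $(\mathcal X^\top G_{12})_{ij}$ is a sum $\sum_k x_{ki}(\cdot)$. Apply the cumulant expansion $\E[x f(x)]=\sum_{a\le A}\kappa_{a+1}\E\partial^a f/a!+\mathrm{Rem}$. The moment envelope $\E|Z_{11}|^r\le M$ controls the cumulants, and the derivative bounds of Lemma~\ref{lem:preliminary} control the remainder once $A$ is taken large.

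The second-cumulant term produces the closing terms $-\widehat m\,\Sigma G_{11}$-type and $n^{-1}\tr(\Sigma G_{11})G_{22}$-type, together with diagonal corrections of size $O(n^{-1}\delta^{-C})$ coming from $\partial_{ki}G$ acting on entries with repeated indices.

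\textbf{Step 2: decoupling.} Replace $\E[\widehat m\,G]$ by $\E\widehat m\,\E G$, and the traces $n^{-1}\tr\Sigma G_{11}$ similarly. The covariance error is at most $\norm{\widehat m-\E\widehat m}_{L^2}\norm{G_{vw}}_{L^2}\le n^{-1+\epsilon}\delta^{-C}$, using the trace fluctuation bound in \eqref{eq:coarse-moments}. This step is the reason the trace fluctuation must be $n^{-1}$, and not merely $n^{-1/2}$.

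\textbf{Step 3: the higher cumulants.} The third-cumulant terms are the main obstacle. They have the form
\[
 n^{-3/2}\sum_{k,i}\E\bigl[(G_{vs_k}G_{ii}G_{s_kw})\cdots\bigr],
\]
and naive bounds using the isotropic estimate \eqref{eq:coarse-moments} give only $n^{-1/2+\epsilon}$. To do better, I would first replace $G_{ii}$ by $m$, and the diagonal $G_{s_ks_k}$ by their deterministic values. This leaves off-diagonal sums of the form $n^{-3/2}\sum_{k,i}G_{vs_k}G_{iw}$. Writing these as $n^{-3/2}\,v^\top G\,(\sum_{k,i}s_k\mathsf e_i^\top)\,G w$ bounds them by $n^{-3/2}\cdot\sqrt{np}\cdot\delta^{-C}=O(n^{-1/2})$. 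That is still insufficient, so I would gain a further $n^{-1/2+\epsilon}$ by the standard trick: expand once more in the same entry, or use that the centered factor $G_{iw}-\Pi_{iw}$ has an expectation of size $n^{-1/2+\epsilon}$ after another cumulant step. Fourth and higher cumulants carry $n^{-2}$ against $n^2$ summands with bounded entries, and so contribute $O(n^{-1}\delta^{-C})$.

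\textbf{Step 4: stability.} Collecting terms, $(\E\widehat m,\E G_{11})$ satisfies \eqref{eq:MP} and its matrix analogue $\E G_{11}=-(I+\E\widehat m\,\Sigma)^{-1}+\mathcal E$, with $\norm{\mathcal E}_\op\le n^{-1+\epsilon}\delta^{-C}$. Substituting the matrix equation into the scalar one gives a perturbed equation $z=-1/s+n^{-1}\tr\Sigma(I+s\Sigma)^{-1}+O(n^{-1+\epsilon}\delta^{-C})$ for $s=\E\widehat m$. Its derivative in $s$ at $s=m(z)$ equals $1/m'(z)$, which is bounded away from zero by \eqref{eq:calibration-separation}. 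The a priori closeness $|\E\widehat m-m|\le n^{-1/2+\epsilon}$ from Lemma~\ref{lem:preliminary} places $s$ within the basin where the quadratic remainder is controlled by $\delta^{-C}|s-m|^2$. Solving therefore yields $|\E\widehat m-m|\le n^{-1+\epsilon}\delta^{-C}$. Feeding this back into the resolvent formula, whose inverse $(I+m\Sigma)^{-1}$ is bounded by $\delta^{-C}$, gives the operator-norm bound for $\E G_{11}-\Pi_{11}$.
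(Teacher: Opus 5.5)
Your architecture matches the paper's: a cumulant expansion of the block identities, decoupling through the $n^{-1}$ trace fluctuation, and a stability step with a $\delta$ loss. Closing through the nonlinear equation for $s=\E\widehat m$ with the a priori $n^{-1/2+\epsilon}$ bound is a legitimate substitute for the paper's linearization around $\Pi$ via the exact identity $n^{-1}\tr(G_{11}-\Pi_{11})=z(\widehat m-m)$.

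The gap is in the higher-cumulant power counting, which is exactly where the $n^{-1}$ rate is decided. Write the expanded quantity as $\sum_{k,i}c_kx_{ki}G_{iw}$; for you $c_k=v^\top s_k$. Then $\norm{c}_2\le C$, but only $\sum_k|c_k|\le C\sqrt p$. Each $\partial_{ki}$ flips the parity of the number of mixed (bottom--top) factors. So the dangerous third-cumulant term is $n^{-3/2}\sum_{k,i}c_kG_{ii}G_{s_ks_k}G_{iw}$, with the single mixed factor $G_{iw}$. A term like $G_{vs_k}G_{ii}G_{s_kw}$, which has no mixed factor, cannot arise.

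After replacing $G_{ii}$ and $G_{s_ks_k}$ by $m$ and $\Pi_{s_ks_k}$, the coefficient no longer depends on $i$; this uses the common $\chi_3$ of iid entries. Then $\sum_iG_{iw}=\sqrt n\,G_{\bar e,w}$ with the deterministic bottom vector $\bar e=n^{-1/2}\sum_i\mathsf e_i$. You bounded this entry by $\norm{G}_{\op}$, which is why you only reached $O(n^{-1/2})$. But $\Pi$ is block diagonal, so $\Pi_{\bar e,w}=0$ and \eqref{eq:coarse-moments} gives $|G_{\bar e,w}|\lesssim n^{-1/2+\epsilon}\delta^{-C}$. The term is therefore $n^{-3/2}\cdot C\sqrt p\cdot\sqrt n\cdot n^{-1/2+\epsilon}=O(n^{-1+\epsilon}\delta^{-C})$.

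Neither of your remedies supplies this. Entrywise smallness of $\E G_{iw}$ only reproduces the naive $n^{-1/2+\epsilon}$ bound, and a second expansion "in the same entry" is not specified. The saving comes from summing over $i$ before taking absolute values. You also still need two further pieces. First, the replacement errors, each of which carries two coarse fluctuations. Second, the other one-mixed term $G_{ii}G_{is_k}G_{s_kw}$, which is controlled by the endpoint contraction $\sum_k|c_kG_{s_kw}|\le\norm{c}_2\,(w^\top G\Sigma Gw)^{1/2}\le C\delta^{-C}$.

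The fourth-cumulant count is also wrong as written. Coefficient $n^{-2}$ against $pn\asymp n^2$ bounded summands gives $O(1)$, and even using $\sum_k|c_k|\le C\sqrt p$ it gives only $O(n^{-1/2}\delta^{-C})$. Reaching $O(n^{-1}\delta^{-C})$ needs the same parity structure. The terms of $\partial_{ki}^3G_{iw}$ with no mixed factor all contain the endpoint $G_{s_kw}$, so they contract as above. The remaining terms have at least two mixed factors and are $O(n^{-3/2+2\epsilon}\delta^{-C})$.

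There is also a smaller slip in Step 4. The derivative of $s\mapsto-1/s+n^{-1}\tr\Sigma(I+s\Sigma)^{-1}$ at $s=m$ equals $1/m'(\zeta(\mu))=\mu^2\zeta'(\mu)$. It is bounded below only by $c\delta$, which comes from \eqref{eq:stability-main}. The inequality $|m'|\ge c$ in \eqref{eq:calibration-separation} bounds it from above, not below. The $\delta$ loss is absorbed into $\delta^{-C_\epsilon}$, so this does not change the conclusion.
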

Together with Lemma~\ref{lem:preliminary}, this gives the trace approximation used in Lemma~\ref{lem:empirical-control}.
\begin{proof}
Work in the original independent-coordinate basis. Fix a spectral argument $z$, write $m=m(z)$, and put $D=I_p+m\Sigma$, $t=D^{-1}v$, and $c_k=t^\top s_k$. For the exact generalized resolvent, the top block equation in \eqref{eq:linearization} gives
\begin{equation}\label{eq:master}
 (G-\Pi)_{vw}=\sum_{k,i}c_kx_{ki}G_{iw}+mG_{\Sigma t,w}.
\end{equation}
For a normalized entry, the expansion of $\E[xf(x)]$ starts with the variance term $n^{-1}\E f'(x)$. The next terms are $\chi_3\E f''(x)/(2n^{3/2})$ and $\chi_4\E f'''(x)/(6n^2)$, where $\chi_j$ is the unnormalized entry cumulant. A finite expansion with its remainder is given in Lemma~\ref{lem:cumulant}. The derivative rule is $\partial_{ki}G=-G(s_k\mathsf e_i^\top+\mathsf e_is_k^\top)G$.

The second cumulant differentiates $G_{iw}$ into $-G_{is_k}G_{iw}-G_{ii}G_{s_kw}$. Summing the second term cancels the final term in \eqref{eq:master}, up to the empirical trace error. Hence
\begin{equation}\label{eq:mean-master}
 \E(G-\Pi)_{vw}
 =-\E\bigl((\widehat m-m)G_{\Sigma t,w}\bigr)
   -\frac1n\E(GP_2G)_{\Sigma t,w}
   +\text{higher cumulants}.
\end{equation}
The only potentially large third-cumulant self derivative is seen from
\[
 \partial_{ki}^2G_{iw}
 =2G_{is_k}^2G_{iw}+2G_{ii}G_{s_ks_k}G_{iw}
                          +4G_{ii}G_{is_k}G_{s_kw}.
\]
In the middle term replace the two diagonal factors by their deterministic equivalents. These do not depend on $i$. The identity $\sum_iG_{iw}=\sqrt n\,G_{\bar e,w}$, where $\bar e=n^{-1/2}\sum_i\mathsf e_i$ is a deterministic bottom vector, and Lemma~\ref{lem:preliminary} save a factor $\sqrt n$ over an absolute row sum. A replacement error has two coarse fluctuations and has the same saving. The remaining terms have either more mixed-block factors or the bounded contraction $\sum_k|c_kG_{s_kw}|\le C\delta^{-C}$. Thus the third cumulant is $O(n^{-1+C\epsilon}\delta^{-C})$, without an assumption of a symmetric entry law. The fourth cumulant with no mixed-block factors retains this endpoint contraction and is $O(n^{-1}\delta^{-C})$. For cumulant order five, four derivatives of the original mixed-block entry leave at least one mixed-block factor in every term. Even the bound $\sum_{k,i}|c_k|\le Cn\sqrt p$, combined with its coefficient $n^{-5/2}$ and the mixed-entry bound $n^{-1/2+\epsilon}\delta^{-C}$, gives $O(n^{-3/2+\epsilon}\delta^{-C})$. For order six and higher, the coefficient $n^{-j/2}$ and the same sum already give $O(n^{-3/2}\delta^{-C})$. All other factors have bounded operator norms up to powers of $\delta^{-1}$. Lemma~\ref{lem:cumulant}, with a sufficiently large fixed order, gives a smaller remainder. 

Let $d_m=\E\widehat m(z)-m(z)$. The fluctuation bound for the trace and the coarse bilinear estimate in the first term of \eqref{eq:mean-master} show, uniformly in deterministic unit $v,w$,
\[
 \E(G-\Pi)_{vw}=d_m\,v^\top\Sigma D^{-2}w
                   +O(n^{-1+C\epsilon}\delta^{-C}).
\]
For the original matrices, $n^{-1}\tr(G_{11}-\Pi_{11})=z(\widehat m-m)$ holds exactly. The smooth matrices agree with them on an event whose complement has arbitrarily small inverse-polynomial probability. Since the smooth terms have polynomial bounds, their expectations satisfy
\[
 n^{-1}\tr(\E G_{11}-\Pi_{11})=zd_m+O(n^{-D})
\]
for any prescribed fixed $D$. Therefore, we have
\[
 \left(z-\frac1n\tr(\Sigma D^{-2})\right)d_m
 =O(n^{-1+C\epsilon}\delta^{-C}).
\]
The coefficient equals $-m^{-1}(1-m^2\tr(\Sigma^2D^{-2})/n)$, whose modulus is at least $c\delta$ by the deterministic stability estimate. This argument remains valid when $z=0$; no division by $z$ is made. Reducing the preliminary proof exponent establishes \eqref{eq:refined-mean}. 
\end{proof}

\begin{lemma}[Quantitative control of the empirical inverse]\label{lem:empirical-control}
Let $1/\log n\le\delta\le2/\log n$. For every fixed $D>0$, there is an event $\mathcal A$, measurable in $X$, with $P(\mathcal A^c)\le C_Dn^{-D}$, on which the continuous inverse in \eqref{eq:empirical-calibration} exists and
\begin{equation}\label{eq:empirical-control}
 \sup_{\mu\in\Gamma_\delta}|\widehat\zeta(\mu)-\zeta(\mu)|
 \le Cn^{-3/4}\delta^{-c},
 \qquad
 \sup_{\mu\in\Gamma_\delta}
 \norm{(\widehat S-\widehat\zeta(\mu)I_n)^{-1}}_\op
 \le C\delta^{-c}.
\end{equation}
Here $c<\infty$ is fixed, $c_0I_n\preceq\widehat S\preceq C_0I_n$ on $\mathcal A$, and the inverse is locally analytic and satisfies $\widehat\zeta(\bar\mu)=\overline{\widehat\zeta(\mu)}$ on the circle.
\end{lemma}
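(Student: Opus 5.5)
We prove Lemma~\ref{lem:empirical-control} by a perturbative inverse-function argument applied separately at each point of the circle. The continuous inverse is then obtained by continuation from the real starting point $\mu_0=L-R_\delta$.

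Take $\mathcal A$ to be the intersection of three events. The first is the spectral event $\mathcal A_n$ of Lemma~\ref{lem:preliminary}, so that $c_0I_n\preceq\widehat S\preceq C_0I_n$, $\widetilde m=\widehat m$ on the neighborhoods $\mathcal N=\{z:\inf_{\mu}|z-\zeta(\mu)|<c_1\delta^b\}$, and the resolvent there is bounded by $C\delta^{-2}$. The second is the event that $\sup_{z\in\mathcal N}|\widehat m(z)-m(z)|\le n^{-1+2\epsilon}\delta^{-C}$. The third is the analogous event for $\widehat m'$. The second and third events come from three inputs: Lemma~\ref{lem:mean} for the mean, the moment bound \eqref{eq:coarse-moments} with Markov's inequality at a large order $r$ for the fluctuation, and a union bound over an $n^{-3}$-net of $\mathcal N$. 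The net is upgraded to a uniform bound by the derivative bounds $|\widehat m^{(j)}|\le C\delta^{-C}$ valid on $\mathcal A_n$. These estimates give $P(\mathcal A^c)\le C_Dn^{-D}$. Every later step is deterministic on $\mathcal A$.

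Fix $\mu\in\Gamma_\delta$. By Proposition~\ref{prop:calibration}, $m(\zeta(\mu))=-1/\mu$, and by \eqref{eq:calibration-separation}, $|m'(\zeta(\mu))|\ge c$. The second derivatives of $m$ on a disk of radius $r=c_1\delta^b$ about $\zeta(\mu)$ are bounded by $C\delta^{-C}$. Hence $|m(z)+1/\mu|\ge c|z-\zeta(\mu)|$ on $|z-\zeta(\mu)|\le r'$ for some $r'=c\delta^{C}$. On the circle $|z-\zeta(\mu)|=\eta$ with $\eta=n^{-3/4}$, the perturbation satisfies $|\widehat m-m|\le n^{-1+2\epsilon}\delta^{-C}$, which is smaller than $c\eta$. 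Rouch\'e's theorem therefore gives exactly one root of $\widehat m(z)+1/\mu$ in that disk, and exactly one in the larger disk of radius $r'$. At this root, $|\widehat m'|\ge |m'|-o(1)\ge c/2$, so the root is simple. Take $\widehat\zeta(\mu)$ to be this root. This gives the first bound in \eqref{eq:empirical-control}, with room to spare since $n^{-1+2\epsilon}\ll n^{-3/4}$.

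By the implicit function theorem, this local root is analytic in $\mu$. Uniqueness in the disk of radius $r'$, which does not depend on $\mu$, makes the pointwise roots agree with the local analytic branches, so they glue into a continuous function on $\Gamma_\delta$. Conjugation symmetry follows because $\widehat m(\bar z)=\overline{\widehat m(z)}$, $\zeta(\bar\mu)=\overline{\zeta(\mu)}$, and the root in the disk is unique.

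The resolvent bound follows from $\operatorname{dist}(\widehat\zeta(\mu),\operatorname{spec}\widehat S)\ge \operatorname{dist}(\zeta(\mu),\operatorname{spec}\widehat S)-n^{-3/4}\ge c\delta^{C}$, which holds on $\mathcal A_n$ by the separation of the spectrum of $\widehat S$ from the spectral parameters.

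It remains to check the side conditions in \eqref{eq:empirical-calibration}. The condition $\widehat\zeta(\mu)\notin[\lambda_{\min}(\widehat S),\lambda_{\max}(\widehat S)]$ is the main obstacle, because separation from the spectrum does not by itself exclude real points in the gaps between sample eigenvalues. The plan is to handle it as follows. The points $\zeta(\mu)$ are real only at the two real points of the circle. At $\mu=L-R_\delta<0$ we have $\zeta(\mu)<0$, so $\widehat\zeta(\mu)$ lies to the left of $c_0\le\lambda_{\min}(\widehat S)$; this gives the condition $\widehat\zeta(L-R_\delta)<\lambda_{\min}$. At $\mu=L+R_\delta$, $\zeta(\mu)$ lies to the right of $\operatorname{supp}\varrho$, and spectral confinement puts $\lambda_{\max}(\widehat S)$ within $o(\delta^C)$ of the top of that support, so $\widehat\zeta$ lies to the right of $\lambda_{\max}$. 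For non-real $\mu$, $\widehat\zeta(\mu)$ is non-real: if it were real, conjugation would produce a second root $\widehat\zeta(\bar\mu)$ near $\zeta(\bar\mu)\ne\zeta(\mu)$ equal to it, contradicting the injectivity estimate \eqref{eq:zeta-difference} together with the $n^{-3/4}$ closeness. Making this rigorous near the two real points, where $|\operatorname{Im}\zeta(\mu)|$ is small, is the delicate part. There we instead use the sign of $\operatorname{Im}\widehat m$, which equals $\operatorname{sign}\operatorname{Im}\widehat\zeta$ for real $-1/\mu$ off the spectrum, together with $\operatorname{Im}(-1/\mu)\ne0$.
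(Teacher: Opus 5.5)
Your overall route is the paper's: a uniform high-probability bound on $\widehat m-m$ over a polynomial net of the $\delta$-power neighborhoods of $\zeta(\Gamma_\delta)$, Rouch\'e's theorem in an isolating disk of fixed-power-of-$\delta$ radius about each $\zeta(\mu)$, implicit-function continuation glued by uniqueness in that disk, conjugation symmetry from uniqueness, and the resolvent bound from separation plus displacement. Your sharper rate $n^{-1+2\epsilon}\delta^{-C}$ also follows from \eqref{eq:coarse-moments} and \eqref{eq:refined-mean}.

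There is one genuine error, in the initialization step. It is not true that $\zeta(L-R_\delta)<0$. Write $\zeta(\mu)=\mu\bigl(1-n^{-1}\sum_j\lambda_j/(\lambda_j-\mu)\bigr)$. At $\mu_0=L-R_\delta$ the sum can exceed one because $p>n$. For $\Sigma=LI_p$ you get $\zeta(\mu_0)=L(\sqrt\gamma-1)^2-L\sqrt\gamma\,\delta^2/(1+\delta)>0$. So $\zeta(\mu_0)$ lies only $O(\delta^2)$ below the lower edge of the deterministic support, which is where $\lambda_{\min}(\widehat S)$ sits. The paper's Newton bracket $s_1+\mu_0\le\widehat\zeta(\mu_0)\le s_1+\mu_0/n$ reflects this. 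Comparing $\zeta(\mu_0)$ with $c_0\le\lambda_{\min}(\widehat S)$ therefore proves nothing. The repair is the paper's argument, the mirror image of what you do at $L+R_\delta$:
\begin{itemize}
\item Follow the real map from $\mu=-\infty$, where $\zeta\to-\infty$. Its derivative stays positive and its image stays off the support by Proposition~\ref{prop:calibration}, so $\zeta(\mu_0)<\min\supp\varrho$.
\item The gap $c\delta^2$ from \eqref{eq:calibration-separation} dominates both your displacement and the spectral-confinement error of Lemma~\ref{lem:preliminary}.
\item The root $\widehat\zeta(\mu_0)$ is real, by conjugation invariance of its disk and uniqueness, and hence lies below $\lambda_{\min}(\widehat S)$.
\end{itemize}
The same monotone continuation, from $+\infty$, justifies your unproved claim that $\zeta(L+R_\delta)$ lies to the right of the support.

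On the condition $\widehat\zeta(\mu)\notin[\lambda_{\min}(\widehat S),\lambda_{\max}(\widehat S)]$ for non-real $\mu$, you are more careful than the paper, whose proof never checks it. But there is nothing delicate near the real points, and the conjugation/injectivity argument can be dropped. For real $x$ off the spectrum, $\widehat m(x)=n^{-1}\sum_i(s_i-x)^{-1}$ is real, while $-1/\mu$ is non-real whenever $\mu$ is. Hence $\widehat m(\widehat\zeta(\mu))=-1/\mu$ forces $\widehat\zeta(\mu)\notin\R$ for every non-real $\mu\in\Gamma_\delta$ at once; off the spectrum is guaranteed by your resolvent bound. Only the two real points of the circle then need the edge comparisons described above.
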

\begin{proof}
We obtain uniform trace approximation, isolate one root near every deterministic root, and then verify the real initialization and global continuity.

Lemmas~\ref{lem:preliminary} and~\ref{lem:mean}, a polynomial-size net on slightly smaller spectral neighborhoods from Lemma~\ref{lem:calibration-control}, and sufficiently high fixed moments give
\[
 \sup_z|\widehat m(z)-m(z)|\le Cn^{-3/4}\delta^{-c}
\]
with failure probability $C_Dn^{-D}$. The supremum is over these neighborhoods, not over the entire complex plane. On the spectral event, derivatives in $z$ are bounded by fixed powers of $\delta^{-1}$; a mesh $n^{-A}$ with a fixed sufficiently large $A$ therefore controls interpolation. The moment order and failure exponent are chosen after the mesh size. The event includes the spectral gap and upper norm from Lemma~\ref{lem:preliminary}.

Fix $\mu\in\Gamma_\delta$. By \eqref{eq:mprime} and the polynomial bound on $m''$, a circle of radius $r=c_1\delta^b$ about $\zeta(\mu)$, for a sufficiently large fixed $b$, is inside the common resolvent set and satisfies
\[
 |m(z)+1/\mu|\ge c_2r\quad\text{when }|z-\zeta(\mu)|=r.
\]
Taylor's theorem compared with $m'(\zeta(\mu))(z-\zeta(\mu))$ shows that $m(z)+1/\mu$ has exactly one zero there, counted with multiplicity. The empirical perturbation is smaller than $c_2r$ for large $n$, so Rouch\'e's theorem \citep[Chapter~4, Section~5.2, corollary to Theorem~18]{Ahlfors1979} gives exactly one empirical zero in this disk, also counted with multiplicity. It is therefore simple. The same Taylor bound at that zero yields displacement at most $Cn^{-3/4}\delta^{-c}$, uniformly in $\mu$.

The displacement is $o(r)$. By the analytic implicit-function theorem, each selected root continues locally in $\mu$. In a sufficiently small neighborhood of a given contour point, this continuation stays inside both the original isolating disk and the disk about the nearby deterministic root. The latter has exactly one zero, so the two selections coincide. This proves continuity on the circle. At the repeated endpoint of a parametrization the deterministic disk is the same and contains the same unique zero; hence the selection is a function on the closed circle.

At $\mu_0=L-R_\delta<0$, the deterministic root $\zeta(\mu_0)$ is below $\min\supp\varrho$. To see this, follow the real map from $\mu=-\infty$, where $\zeta(\mu)\to-\infty$; its derivative remains positive and its image remains outside $\supp\varrho$ by Proposition~\ref{prop:calibration}. The quantitative gap and sample spectral confinement keep its isolating disk below $\lambda_{\min}(\widehat S)$ for large $n$. The disk and its equation are conjugation invariant, so its unique root is real. It is the unique solution below the smallest sample eigenvalue, since $\widehat m$ increases there from zero to infinity. Thus it has the required initial value.

Spectral separation and the displacement bound imply the resolvent estimate in \eqref{eq:empirical-control}. Conjugating the root construction and using uniqueness gives the stated symmetry. All events involve only $X$, with uniform constants over the model family.
\end{proof}

\subsection{Existence and uniqueness of the empirical inverse}\label{app:pf:prop:empirical-inverse}

\begin{proof}[Proof of Proposition~\ref{prop:empirical-inverse}]
Lemma~\ref{lem:empirical-control} proves existence. To prove uniqueness, the value at $L-R_\delta$ is unique by strict monotonicity of $\widehat m$ below the smallest eigenvalue. The set where two selections agree is closed by continuity. It is also open in $\Gamma_\delta$, since $\widehat m'\ne0$ gives a locally unique inverse at each common value. This set is nonempty, and $\Gamma_\delta$ is connected, so the two selections agree everywhere. This establishes uniqueness of the continuous selection on the circle.
\end{proof}

\section{Moments of the integrated calibrated field}\label{app:moments}
This appendix proves the stochastic estimate shared by the scalar and vector results. Pointwise Green-function bounds are used only to control remainders. The leading terms are integrated through their two-point covariance kernels; this distinction prevents a leading power of $\log n$.

\subsection{Cumulants and a finite integration-by-parts expansion}
For a real variable $Z$ with all moments finite, its $j$th cumulant is
\[
 \chi_j=(-\ii)^j \frac{d^j}{dt^j}
                   \log\E e^{\ii tZ}\Big|_{t=0}.
\]
The logarithm is defined near zero, where the characteristic function is nonzero. This definition uses a characteristic function, not an assumed finite moment-generating function. Equivalently, the cumulants are determined recursively by
\[\E Z^{r+1}=\sum_{j=0}^r\binom rj\chi_{j+1}\E Z^{r-j}.\]
For a centered standardized entry, $\chi_1=0$, $\chi_2=1$, $\chi_3=\E Z^3$, and $\chi_4=\E Z^4-3$. Higher cumulants measure the additional terms beyond Gaussian integration by parts. We use the notation of \citet[Lemma~2.4]{HeKnowlesRosenthal2018}; a sample-covariance formulation appears in \citet[Lemma~3.7]{Yang2024}.

\begin{lemma}[Fixed moments of the calibrated field]\label{lem:field-moments}
For every fixed $r\ge1$, every deterministic unit loading $v$, and every deterministic continuous weight $\psi$ on $\Gamma_\delta$, there is an $X$-measurable event $\mathcal A$ with $\Pp(\mathcal A^c)\le C_Dn^{-D}$ for any fixed $D$, such that
\begin{equation}\label{eq:field-moment}
 \E\left[\ind_{\mathcal A}
 \left|\frac1{2\pi\ii}\oint_{\Gamma_\delta}
 \psi(z)(\widehat F(z)-F(z))\dd z\right|^r\right]
 \le C_r n^{-r/2}\norm\psi_{\infty,\Gamma_\delta}^r.
\end{equation}
\end{lemma}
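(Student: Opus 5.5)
On the event $\mathcal A$ of Lemma~\ref{lem:empirical-control}, which already contains the event of Lemma~\ref{lem:preliminary}, I would decompose the error into a calibration part and a resolvent part. Put $G_{22}(z)=(\widehat S-zI_n)^{-1}$ and $\Phi(z)=n^{-1}(Xv)^\top G_{22}(z)Y$. Then
\[
 \widehat F(\mu)-F(\mu)=\bigl[\Phi(\widehat\zeta(\mu))-\Phi(\zeta(\mu))\bigr]+\bigl[\Phi(\zeta(\mu))-F(\mu)\bigr].
\]
The first bracket is controlled by Lemma~\ref{lem:empirical-control}. Along the segment joining $\zeta(\mu)$ and $\widehat\zeta(\mu)$, the resolvent bound stays $O(\delta^{-c})$, so $|\partial_z\Phi|\le C\delta^{-C}\norm{Xv}\norm Y/n$. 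This gives a contribution $O(n^{-3/4}\delta^{-C})$ pointwise and uniformly on $\Gamma_\delta$. Since $\delta^{-C}=(\log n)^{O(1)}$, after integration against $\psi$ this term is $o(n^{-1/2})\norm\psi_\infty$ in every $L^r$ norm.

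This crude argument is too lossy, however, because $\norm Y\asymp\sqrt n$. I would therefore linearize more carefully. Write $\widehat\zeta-\zeta=-(\widehat m-m)(\zeta)/m'(\zeta)+O(n^{-3/2+\epsilon}\delta^{-C})$. This turns the first bracket into $-\Phi'(\zeta)\,(\widehat m-m)(\zeta)/m'(\zeta)$. By Lemma~\ref{lem:preliminary} and Lemma~\ref{lem:mean}, the factor $\widehat m-m$ is $O(n^{-1+\epsilon}\delta^{-C})$ in $L^r$, and $\Phi'$ is bounded in $L^r$, so the product is negligible.

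For the second bracket, I would substitute $Y=X\beta+\varepsilon$. Using $\zeta=\zeta(\mu)$ and $m(\zeta)=-1/\mu$, the signal part is $v^\top[\Sigma^{1/2}(G_{11}(\zeta)+I_p)\Sigma^{1/2}]\beta$. By the anisotropic comparison \eqref{eq:coarse-moments} its deterministic equivalent is
\[
 v^\top\Sigma^{1/2}\bigl(I-(I+m\Sigma)^{-1}\bigr)\Sigma^{1/2}\beta=v^\top m\Sigma(I+m\Sigma)^{-1}\Sigma\ldots,
\]
which after simplification equals $F(\mu)$. The noise part is $n^{-1}(Xv)^\top G_{22}\varepsilon$. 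Conditional on $X$ it is Gaussian with variance $\sigma^2n^{-2}\norm{G_{22}Xv}^2=O(\delta^{-C}/n)$.

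The main obstacle is the signal fluctuation. Pointwise, \eqref{eq:coarse-moments} gives only $n^{-1/2+\epsilon}\delta^{-C}$, and the factors $n^\epsilon$ and $(\log n)^{C}$ must be removed to reach exactly $n^{-r/2}$. Following the appendix's remark, I would not integrate pointwise bounds. Instead I would bound the $r$-th moment of the integrated linear statistic
\[
 T=\oint\psi(z)\bigl(G_{\tilde v\tilde w}(\zeta(z))-\Pi_{\tilde v\tilde w}\bigr)\dd z,\qquad \tilde v=\Sigma^{1/2}v,\ \tilde w=\Sigma^{1/2}\beta.
\]
The mean is handled by Lemma~\ref{lem:mean}, which gives $O(n^{-1+\epsilon}\delta^{-C})=o(n^{-1/2})$. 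For the fluctuation I would use the cumulant expansion of Lemma~\ref{lem:cumulant} via $\E[T\,\overline{T}^{\,r-1}]$. The leading term is the second cumulant, which produces the two-point kernel
\[
 n^{-1}\sum_{k,i}\partial_{ki}G(z)\,\overline{\partial_{ki}G(z')}.
\]
Its deterministic equivalent is a bounded holomorphic function of $(z,z')$ for $z,z'$ near the images $\zeta(\Gamma_\delta)$. The key point is that $\zeta(\Gamma_\delta)$ stays at distance $\asymp1$ from $\supp\varrho$ except near the real axis, where the distance is only $\delta^2$. So one must check that the kernel's singular part is integrable along the contour without a $\delta$ loss; I would verify this through the explicit formula in terms of $m(z),m(z')$ and $(\zeta(z)-\zeta(z'))^{-1}$, using \eqref{eq:stability-main}. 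The higher cumulants and the remaining terms carry extra factors $n^{-1/2}$ that absorb the $\delta^{-C}n^{\epsilon}$ losses. This yields $\E|T|^r\le C_rn^{-r/2}\norm\psi_\infty^r$ by the standard recursive moment argument, and combining the three pieces proves \eqref{eq:field-moment}.
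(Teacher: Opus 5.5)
Your architecture matches the paper's. The calibration error is negligible: your first bound already suffices, since $\norm{Xv}_2\norm{Y}_2/n=O(1)$ in $L^r$, so the relinearization is unnecessary. (With the paper's $G$, the signal part is $v^\top(G_{11}+I_p)\beta$, without the extra $\Sigma^{1/2}$ factors.) The mean is handled by Lemma~\ref{lem:mean}, and the fluctuation by a cumulant/pair-partition recursion whose pair terms are integrated over the contour.

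\textbf{The gap.} It lies exactly at the step you flag as the main obstacle: bounding the double contour integral of the leading two-point kernel uniformly in $\delta$. Your geometric premise is false. For $\Sigma=LI$, which is in the family, $\mu=L+R_\delta e^{\ii t}$ gives $\zeta(\mu)=L(1+\gamma)+L\sqrt\gamma\,[(1+\delta)e^{\ii t}+(1+\delta)^{-1}e^{-\ii t}]$. This is an ellipse of width $\asymp\delta$ whose foci are the spectral edges $L(\sqrt\gamma\pm1)^2$, so the whole image curve hugs the spectrum, not only its real points. In the coordinate $w=\sqrt\gamma L/(L-z)$, which sends $\Gamma_\delta$ to $|w|=(1+\delta)^{-1}$, the Hermitian leading kernel is a nonnegative combination of terms $n^{-1}K_f(w,\xi)A(w)\overline{A(\xi)}$. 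Here $A$ is bounded analytic and $K_f(w,\xi)=(1-\sum_jf_j(w)\overline{f_j(\xi)})^{-1}$. For $\Sigma=LI$ this is the Szeg\H{o} kernel $(1-w\bar\xi)^{-1}$, of size $\asymp\delta^{-1}$ along the entire diagonal.

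So the kernel is not bounded uniformly in $\delta$, and no absolute-integrability check can work. The stability estimates of Lemma~\ref{lem:calibration-control}, with $m(\zeta(z))=-1/z$, only bound the divided difference $(m(\zeta(z))-m(\zeta(z')))/(\zeta(z)-\zeta(z'))$ by $C\delta^{-1}$, a loss of $\log n$. Even the exact modulus gives $\int_0^{2\pi}\int_0^{2\pi}|1-(1+\delta)^{-2}e^{\ii(t-s)}|^{-1}dt\,ds\asymp\log(1/\delta)$.

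\textbf{The missing idea is positivity.} The paper proves $K_f(w,\xi)\preceq(1-w\bar\xi)^{-1}$ in the positive-semidefinite kernel order. This follows from the row-contraction bound $\sum_j\mathcal M_j\mathcal M_j^*\preceq I-\mathsf P_0$ for multiplication by the $f_j$ on $H^2$, together with a telescoping geometric series. The order survives bounded analytic multipliers. Parseval then bounds the integrated kernel by $\sum_{j\ge0}(1+\delta)^{-2j}|\widehat b(j)|^2\le\norm b_{L^2}^2$, with no loss in $\delta$.

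\textbf{The noise term has the same problem,} though you treat it as routine. Integrating the pointwise conditional variance $\sigma^2n^{-2}\norm{G_{22}Xv}_2^2=O(\delta^{-C}/n)$ along the contour gives $\delta^{-C}\norm\psi_{\infty,\Gamma_\delta}^2/n$, not $C\norm\psi_{\infty,\Gamma_\delta}^2/n$. Even the sharp pointwise size $\asymp(\delta n)^{-1}$ (again for $\Sigma=LI$) loses $\delta^{-1/2}$ under Minkowski's inequality. The paper instead writes the conditional covariance of the integrated noise field as a divided difference of the transfer via \eqref{eq:resolvent-difference}. It replaces this by $\sigma^2S_{12}v^\top M_{12}v/n$ and applies the same Hardy-space bound.

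\textbf{The fourth cumulant is not lower order.} It is not true that all higher cumulants carry an extra $n^{-1/2}$. The fourth cumulant produces the second line of \eqref{eq:leading-covariance}, which is of order $1/n$ for localized loadings such as $v=e_j$. It therefore cannot absorb $\delta^{-C}$ factors and needs its own $\delta$-uniform integrated bound. The paper uses Minkowski's integral inequality with $\sum_j|c_j(z)|^2\le C$ on the contour.
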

We prove Lemma~\ref{lem:field-moments} in Appendix~\ref{sec:pf:lem:field-moments} using the cumulant recursion and integrated-kernel estimate below.

\begin{lemma}[Cumulant expansion]\label{lem:cumulant}
Let $Z$ be centered with variance one, and set $x=Z/\sqrt n$. Fix $r\ge2$. Suppose $f\in C^r(\R;\C)$, $\norm{f^{(r)}}_\infty<\infty$, and $\E|Z|^{r+1}<\infty$. Then we have
\[
 \E[xf(x)]
 =\sum_{j=2}^r\frac{\chi_j}{(j-1)!n^{j/2}}\E f^{(j-1)}(x)+\mathcal R,
\]
where
\[
 |\mathcal R|\le n^{-(r+1)/2}\norm{f^{(r)}}_\infty
 \left(\frac{\E|Z|^{r+1}}{r!}
 +\sum_{j=2}^r
 \frac{|\chi_j|\E|Z|^{r+1-j}}{(j-1)!(r+1-j)!}\right).
\]
\end{lemma}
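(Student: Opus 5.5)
The plan is to Taylor-expand $f$ at the origin and convert the resulting raw moments of $x$ into cumulants with the moment--cumulant recursion stated just before the lemma. The remainder is then tracked term by term. The hypothesis $\E|Z|^{r+1}<\infty$ guarantees that all moments $\E Z^a$ for $a\le r+1$ exist. Hence the cumulants $\chi_1,\dots,\chi_{r+1}$ are well defined, and the recursion $\E Z^{a+1}=\sum_{j=0}^a\binom aj\chi_{j+1}\E Z^{a-j}$ holds for $a\le r$. Since $x=Z/\sqrt n$, homogeneity gives
\[
 \E x^{a+1}=\sum_{j=0}^{a}\binom aj\chi_{j+1}\,n^{-(j+1)/2}\,\E x^{a-j}.
\]

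\textbf{Step 1 (Taylor expansion).} Write
\[
 f(x)=\sum_{a=0}^{r-1}\frac{f^{(a)}(0)}{a!}x^a+\mathcal T_r(x),
 \qquad |\mathcal T_r(x)|\le\frac{\norm{f^{(r)}}_\infty}{r!}|x|^r .
\]
Multiplying by $x$ and taking expectations, the Taylor remainder contributes at most $n^{-(r+1)/2}\norm{f^{(r)}}_\infty\E|Z|^{r+1}/r!$. This is the first part of the bound on $\mathcal R$.

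\textbf{Step 2 (recursion and resummation).} Insert the scaled recursion into $\sum_{a\le r-1}f^{(a)}(0)\E x^{a+1}/a!$ and exchange the order of summation. Using $\binom aj/a!=1/(j!(a-j)!)$ and substituting $b=a-j$, the coefficient of $\chi_{j+1}n^{-(j+1)/2}$ becomes
\[
 \frac1{j!}\sum_{b=0}^{r-1-j}\frac{f^{(j+b)}(0)}{b!}\E x^b .
\]
This is exactly the expectation of the order-$(r-1-j)$ Taylor polynomial of $f^{(j)}$ at the origin. It therefore equals $\E f^{(j)}(x)$ up to an error bounded by
\[
 \norm{f^{(r)}}_\infty\,\frac{\E|x|^{r-j}}{(r-j)!}.
\]
Next reindex with $j+1\mapsto j$, so that $j$ runs from $1$ to $r$. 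The term $j=1$ vanishes because $\chi_1=0$. Each remaining term is $\chi_j\E f^{(j-1)}(x)/((j-1)!\,n^{j/2})$, with an error of at most
\[
 \frac{|\chi_j|}{(j-1)!}\,n^{-j/2}\,n^{-(r+1-j)/2}\,\frac{\norm{f^{(r)}}_\infty\E|Z|^{r+1-j}}{(r+1-j)!}.
\]
This is precisely the $j$th summand of the stated remainder, with total power $n^{-(r+1)/2}$.

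\textbf{Main obstacle.} There is no analytic difficulty here, since the argument is purely algebraic bookkeeping. The one point needing care is the index ranges in the double sum after the exchange. One must check that the truncated Taylor order $r-1-j$ of $f^{(j)}$ is the one whose remainder involves $f^{(r)}$. One must also check that the highest cumulant used, $\chi_r$, requires only $\E|Z|^{r}$, while $\E|Z|^{r+1}$ enters solely through the Step~1 remainder. Complex-valued $f$ causes no trouble, because Taylor's theorem with the sup-norm remainder bound can be applied to the real and imaginary parts separately, or in integral form directly.
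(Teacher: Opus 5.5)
Your proposal is correct and follows the paper's proof essentially verbatim: Taylor-expand $f$ through degree $r-1$, expand each $f^{(j-1)}$ through degree $r-j$, and match the polynomial coefficients using the moment--cumulant recursion, which gives exactly the stated remainder terms. One small caveat: for complex-valued $f$ use the integral form of the Taylor remainder, because splitting into real and imaginary parts can inflate the constant by up to a factor $\sqrt2$ and so would not give the stated bound exactly.
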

\begin{remark}
This is the standard cumulant expansion, specialized to entries of variance $1/n$ and a bounded-derivative remainder. The remainder in \citet[Lemma~2.4]{HeKnowlesRosenthal2018} instead separates a tail term and a local derivative bound; \citet[Lemma~3.7]{Yang2024} uses the same separation for sample covariance matrices. The bounded form above follows directly by Taylor's theorem, as shown next. For Gaussian $Z$, only $\chi_2$ is nonzero, and the untruncated identity is Gaussian integration by parts, $\E[xf(x)]=n^{-1}\E f'(x)$, under the usual integrability conditions. As we need to have the uniform control of the Green-function derivatives on the logarithmic contour, for the self-containedness, we still present its proof below.
\end{remark}
\begin{proof}
Expand $f(x)$ at zero through degree $r-1$. The remainder in $\E[xf(x)]$ is at most $\norm{f^{(r)}}_\infty\E|x|^{r+1}/r!$. In the $j$th term on the right, expand $f^{(j-1)}(x)$ through degree $r-j$. Its remainder, after multiplication by the cumulant coefficient, is bounded by
$|\chi_j|n^{-j/2}\norm{f^{(r)}}_\infty\E|x|^{r+1-j}/((j-1)!(r+1-j)!)$.
The polynomial coefficients agree by the moment--cumulant recursion above. Substituting $x=Z/\sqrt n$ gives the claimed bound.

For a fixed Green-function product, differentiating one entry inserts the rank-two matrix $\Delta_{ki}=s_k\mathsf e_i^{\top}+\mathsf e_i s_k^{\top}$ between resolvents. On the spectral event, every resolvent norm along the one-entry segment is bounded by a fixed power of $\delta^{-1}$, so the same holds for each of the finitely many derivatives. For the smooth versions off that event, the spectral-calculus construction in Lemma~\ref{lem:preliminary} gives the same polynomial derivative bound. H\"older's inequality and an arbitrarily large fixed probability exponent bound discrepancies of the exact and smooth derivative identities. Thus the conditional remainder has the asserted form, also when other Green entries or their centered versions occur in $f$.
\end{proof}

For the next lemma all vector subscripts distinguish factors or coordinates. Fix spectral arguments $z_1,\ldots,z_N$ on the deterministic tubes and bounded deterministic top vectors $v_j,w_j$. Write $G_j=G(z_j)$ and $\widehat m_j=\widehat m(z_j)$, and define
\[
 \mathcal F_j=v_j^\top(G(z_j)-\E G(z_j))w_j,
 \quad m_j=m(z_j),\quad D_j=I_p+m_j\Sigma.
\]
For these spectral arguments, define the following two-point quantities:
\[
 M_{ij}=\Sigma D_i^{-1}D_j^{-1},\quad
 Q_{ij}=\frac{m_im_j}{n}\tr(\Sigma^2D_i^{-1}D_j^{-1}),\quad
 S_{ij}=\frac{m_im_j}{1-Q_{ij}}.
\]
At equal arguments they are defined by continuity. The deterministic leading covariance is
\begin{align}\label{eq:leading-covariance}
 C_{ij}={}&\frac{S_{ij}}n\left[
 (v_i^\top M_{ij}v_j)(w_i^\top M_{ij}w_j)
 +(v_i^\top M_{ij}w_j)(w_i^\top M_{ij}v_j)\right]\notag\\
 &+\frac{\chi_4m_im_j}{n}\sum_k
 (v_i^\top D_i^{-1}s_k)(w_i^\top D_i^{-1}s_k)
 (v_j^\top D_j^{-1}s_k)(w_j^\top D_j^{-1}s_k).
\end{align}
The entries $G^j_{ab}$ denote $a^{\top}G(z_j)b$. Equation~\eqref{eq:leading-covariance} is bilinear, without complex conjugation. Its Hermitian covariance form is obtained by conjugating the second spectral argument and its endpoint vectors.

\begin{lemma}[Fixed joint moments]\label{lem:recursion}
For each fixed $N\ge2$, uniformly on the deterministic tubes,
\begin{equation}\label{eq:moment-recursion}
 \E\prod_{j=1}^N \mathcal F_j
 =\sum_{r=2}^N C_{1r}\E\prod_{j\notin\{1,r\}}\mathcal F_j
 +O_N\left(n^{-(N+1)/2+\epsilon_N}\delta^{-C_N}\right),
\end{equation}
where $\epsilon_N>0$ can be chosen arbitrarily small by choosing the preliminary exponent sufficiently small. Thus, for even $N$, we have
\begin{equation}\label{eq:Wick}
 \E\prod_{j=1}^N\mathcal F_j
 =\sum_{\mathfrak P}\prod_{\{i,j\}\in\mathfrak P}C_{ij}
 +O_N(n^{-N/2-1/4}\delta^{-C_N}),
\end{equation}
where $\mathfrak P$ ranges over pair partitions. For odd $N$ the whole expectation has the order of the remainder in \eqref{eq:Wick}. Constants are allowed to depend on fixed $N$; no bound for $N$ growing with $n$ is claimed.
\end{lemma}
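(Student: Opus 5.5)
The plan is to prove the recursion \eqref{eq:moment-recursion} by a cumulant expansion applied to the first factor $\mathcal F_1$. The Wick formula \eqref{eq:Wick} then follows by induction on $N$.

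\emph{Expanding the first factor.} Write $\mathcal F_1=v_1^\top(G_1-\Pi_1)w_1-\E v_1^\top(G_1-\Pi_1)w_1$. Expand $v_1^\top(G_1-\Pi_1)w_1$ by the master identity \eqref{eq:master} with $t=D_1^{-1}v_1$ and $c_k=t^\top s_k$. Then
\[
 \E\prod_j\mathcal F_j=\sum_{k,i}c_k\E\Bigl[x_{ki}G^1_{iw_1}\prod_{j\ge2}\mathcal F_j\Bigr]+m_1\E\Bigl[G^1_{\Sigma t,w_1}\prod_{j\ge2}\mathcal F_j\Bigr]-\E[\cdots]\,\E\prod_{j\ge2}\mathcal F_j .
\]
Apply Lemma~\ref{lem:cumulant} to each $x_{ki}$, with fixed order $r$ large in terms of $N$. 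The derivatives are computed through $\partial_{ki}G=-G\Delta_{ki}G$. Off the spectral event we use the smooth versions of Lemma~\ref{lem:preliminary}; the resulting discrepancies are $O(n^{-D})$.

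\emph{The second cumulant.} When the derivative falls on $G^1_{iw_1}$, the term $G_{ii}G_{s_kw}$ cancels the $m_1$ term up to $(\widehat m_1-m_1)$. As in Lemma~\ref{lem:mean}, this produces a self-consistent factor. The fluctuation of $\widehat m_1$ couples to the remaining $\mathcal F_j$ only at order $n^{-1}$ (Lemma~\ref{lem:preliminary}), so it contributes a renormalization. When the derivative falls on $\mathcal F_r$, we get
\[
 -\frac1n\sum_{k,i}c_k\,G^1_{iw_1}\bigl(G^r_{v_rs_k}G^r_{iw_r}+G^r_{v_ri}G^r_{s_kw_r}\bigr).
\]
Replace the factors by their deterministic equivalents through Lemmas~\ref{lem:preliminary} and~\ref{lem:mean}. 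The sum $\sum_i G^1_{iw_1}G^r_{iw_r}$ is an observation-block trace of a product of two resolvents. It must itself be closed by a second self-consistent equation, which yields the geometric factor $(1-Q_{1r})^{-1}$. Since $\sum_k s_ks_k^\top=\Sigma$, this produces the $S_{1r}$ term in $C_{1r}$. Where the derivative falls on two different factors, or centering corrections arise, the result is $O(n^{-1}\cdot n^{-(N-2)/2})$ times extra fluctuations. This is absorbed into the remainder via Hölder's inequality and the moment bounds \eqref{eq:coarse-moments}.

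\emph{Higher cumulants.} The fourth cumulant with one derivative on $\mathcal F_1$'s chain and the rest distributed so that every factor is diagonal at $(k,i)$ yields the $\chi_4$ term of \eqref{eq:leading-covariance}. Here $G_{ii}\approx m$ and $G_{s_k\cdot}\approx$ the $D^{-1}s_k$ factors, and the $i$-sum gives $n\cdot n^{-2}$. The third cumulant and fifth-and-higher cumulants are handled as in Lemma~\ref{lem:mean}. Each term either contains a mixed-block factor, costing $n^{-1/2+\epsilon}$, or admits the $\sqrt n$ saving from $\sum_i G_{iw}=\sqrt n\,G_{\bar e,w}$. In both cases the term falls into $O(n^{-(N+1)/2+\epsilon_N}\delta^{-C_N})$, once each remaining $\mathcal F_j$ is bounded by $n^{-1/2+\epsilon}$ in $L^{N}$.

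The main obstacle is the two-resolvent trace $n^{-1}\sum_i G^1_{iw}G^r_{iw'}$. One must show it concentrates on $S_{1r}$-type deterministic quantities with error $n^{-1/2+\epsilon}$. This requires a second cumulant-expansion loop (a two-point self-consistent equation), together with the stability $|1-Q_{1r}|\ge c\delta^2$. That stability comes from Cauchy--Schwarz and the bound $n^{-1}\sum\lambda^2/|\lambda-\mu|^2\le(1+\delta)^{-2}$ in Lemma~\ref{lem:calibration-control}.

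\emph{Induction.} Iterate the recursion to obtain \eqref{eq:Wick}. Each step loses $n^{-1/2+\epsilon}$ relative to the main order $n^{-N/2}$. Choosing $\epsilon_N<1/4$ gives the remainder $n^{-N/2-1/4}$. For odd $N$, the base case $N=1$ vanishes, so the whole expectation is of remainder order.
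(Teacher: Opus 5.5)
Your outline matches the paper's proof in structure: the master identity \eqref{eq:master} applied to the first factor, the second cumulant producing the pairing, the $\chi_4$ allocation with one derivative on $G^1_{iw_1}$ and two on a single external factor, the row-sum saving $\sum_iG_{iw}=\sqrt n\,G_{\bar e,w}$ for the one-mixed-entry third-cumulant terms, power counting for the rest, and iteration with $\epsilon_N<1/4$. The gap is at the step you yourself flag as the main obstacle. It is a missing idea rather than a failing step.

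After the $k$-sum, the pairing term is exactly $-\frac1n\bigl[(G_r)_{v_r,\Sigma t}(G_1P_2G_r)_{w_1w_r}+(G_r)_{\Sigma t,w_r}(G_1P_2G_r)_{w_1v_r}\bigr]$. The factor $(G_1P_2G_r)_{w_1w_r}=\sum_iG^1_{iw_1}G^r_{iw_r}$ is a bilinear form with top endpoints, not a normalized trace. To replace it inside $\E[\,\cdot\,\prod_{j\notin\{1,r\}}\mathcal F_j]$, you need an $L^s$ bound of order $n^{-1/2+\epsilon}\delta^{-C}$ on its deviation from $S_{1r}\,w_1^\top M_{1r}w_r$. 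The second cumulant loop you propose would at best identify its mean, using the stability of $1-Q_{1r}$; your Cauchy--Schwarz argument in fact gives $|1-Q_{1r}|\ge c\delta$. A fluctuation bound for a two-resolvent product would still need a further concentration argument, which you do not give and which Lemma~\ref{lem:preliminary} does not state.

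None of this is needed. Because $G(z)^{-1}$ depends on $z$ only through $-zP_2$, the resolvent identity gives exactly $G_1P_2G_r=(G_1-G_r)/(z_1-z_r)$. The single-resolvent bound \eqref{eq:coarse-moments} then replaces the top block by $(\Pi_{11}(z_1)-\Pi_{11}(z_r))/(z_1-z_r)=\frac{m_1-m_r}{z_1-z_r}M_{1r}$ with the required $L^s$ error. Subtracting \eqref{eq:MP} at $z_1$ and $z_r$ gives $\frac{m_1-m_r}{z_1-z_r}=\frac{m_1m_r}{1-Q_{1r}}=S_{1r}$. For nearly coincident arguments, do not divide by $z_1-z_r$; instead use the derivative version of \eqref{eq:coarse-moments}, i.e. \eqref{eq:Cauchy-derivative} applied to the coarse error on a small disk inside the tube. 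This costs only a fixed power of $\delta^{-1}$.

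A smaller correction concerns the terms where the derivative hits $G^1_{iw_1}$, namely $-\E[(\widehat m_1-m_1)G^1_{\Sigma t,w_1}\mathscr P]$ and $-\frac1n\E[(G_1P_2G_1)_{\Sigma t,w_1}\mathscr P]$ with $\mathscr P=\prod_{j\ge2}\mathcal F_j$. They produce no self-consistent factor or renormalization here. Unlike in Lemma~\ref{lem:mean}, the $N-1$ centered factors already supply $n^{-(N-1)/2}$. So the refined mean, the trace fluctuation bound and H\"older's inequality make them directly $O(n^{-(N+1)/2+C\epsilon}\delta^{-C})$.
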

\begin{remark}
The recursion is the fixed-product version of the method in \citet[Section~4 and Lemma~5.2]{Yang2024}. That paper derives Gaussian pair partitions for eigenvector resolvent statistics. Here the spectral points are on the explicitly varying contour and the error retains its dependence on $\delta$. The fourth cumulant is kept in the coefficient rather than absorbed into an unspecified covariance. The argument below derives these quantitative statements instead of applying a fixed-parameter limit theorem.
\end{remark}

\begin{proof}
We apply the cumulant expansion to the first factor, identify terms that pair it with one of the remaining factors, and bound every other derivative allocation by an additional negative power of $n$. Iterating the resulting recursion proves the pair-partition formula.

Put $t=D_1^{-1}v_1$ and $c_k=t^\top s_k$. Multiply \eqref{eq:master} by $\mathscr P=\prod_{j=2}^N\mathcal F_j$, and expand in each independent normalized entry. Centering the first factor instead of subtracting $\Pi$ costs $O(n^{-1+\epsilon}\delta^{-C})\E|\mathscr P|$, which is $O(n^{-(N+1)/2+C_N\epsilon}\delta^{-C_N})$ by Lemmas~\ref{lem:preliminary}--\ref{lem:mean}. Expand through order $2N+10$ in Lemma~\ref{lem:cumulant}. Its remainder, even with the crude coefficient sum $Cn\sqrt p$, is smaller than the remainder in \eqref{eq:moment-recursion}.

\proofstep{Second cumulant: one pair is removed.}
The derivative on $G^1_{iw_1}$ cancels the last term of \eqref{eq:master}, except for
\[
 -\E\bigl((\widehat m_1-m_1)G^1_{\Sigma t,w_1}\mathscr P\bigr)
 -\frac1n\E\bigl((G_1P_2G_1)_{\Sigma t,w_1}\mathscr P\bigr).
\]
The refined mean, trace fluctuation, and the $N-1$ centered factors bound these terms by the required error. A derivative on $\mathcal F_r$ instead gives, after carrying out the latent-coordinate sum exactly,
\begin{align}\label{eq:pair-summed}
 -\frac1n\E\Big[&\bigl(
 (G_r)_{v_r,\Sigma t}(G_1P_2G_r)_{w_1w_r}
 +(G_r)_{\Sigma t,w_r}(G_1P_2G_r)_{w_1v_r}\bigr)\prod_{j\notin\{1,r\}}\mathcal F_j\Big].
\end{align}
There is no free coordinate sum left in this expression. The exact identity
\begin{equation}\label{eq:resolvent-difference}
 G_1P_2G_r=\frac{G_1-G_r}{z_1-z_r}
\end{equation}
has the spectral derivative interpretation at coincidence. On a small disk in a tube, the derivative bound in \eqref{eq:Cauchy-derivative} applied to the coarse error gives the same moment bound for its derivative, with an additional fixed power of $\delta^{-1}$. For separated arguments, direct division gives that bound. Hence the deterministic equivalent of the top block of \eqref{eq:resolvent-difference} is $S_{1r}M_{1r}$, with $L^s$ error $n^{-1/2+\epsilon}\delta^{-C_s}$. The identity $S_{1r}=(m_1-m_r)/(z_1-z_r)$ follows by subtracting \eqref{eq:MP}. Replace also the individual top entry in \eqref{eq:pair-summed} by its deterministic equivalent. The result is the first line of \eqref{eq:leading-covariance} times the expectation with factors $1,r$ removed. Every replacement error has one extra coarse fluctuation, giving $n^{-(N+1)/2+C_N\epsilon}\delta^{-C_N}$.

\proofstep{Third cumulant: the row sum is essential.}
When two derivatives hit at most one external factor, the allocations are $(2,0),(1,1),(0,2)$ between the first Green factor and that factor. For $(2,0)$ use the explicit second derivative in Lemma~\ref{lem:mean}. The term $2G^1_{ii}G^1_{s_ks_k}G^1_{iw_1}\mathscr P$ is potentially largest. Replace its two diagonal entries by deterministic equivalents, which are independent of $i$, and sum the last mixed entry as $\sqrt nG^1_{\bar e,w_1}$. This gains $\sqrt n$ relative to an absolute sum. A diagonal replacement error supplies a second coarse fluctuation, providing the same saving. Other terms in this allocation have a bounded endpoint contraction against $c_k$ or at least three mixed entries.

For $(1,1)$ the only terms with one mixed entry have, up to permutation of the external endpoints, the form $G^1_{ii}G^1_{s_kw_1}G^r_{v_rs_k}G^r_{iw_r}$. Replace the first three factors by deterministic ones and use the same $i$-sum. The top coefficients have a bounded absolute sum over $k$. A replacement error either gives an additional fluctuation while retaining this contraction, or replaces both top endpoints and thus gives two additional small factors before a crude $\sqrt p$ sum is needed. For $(0,2)$ the zero-mixed part of the external second derivative is $2G^r_{v_rs_k}G^r_{ii}G^r_{s_kw_r}$; its product with $G^1_{iw_1}$ has precisely the same structure. All other terms have at least three mixed entries.

There is one further allocation: the two derivatives hit distinct external factors. It has three mixed entries, the original one and one from each external derivative. Its top endpoints contract against $c_k$. Before multiplication by the $N-3$ remaining centered factors its size is at most $Cn^{-3/2}\,n\,(n^{-1/2+\epsilon})^3\delta^{-C}=Cn^{-2+3\epsilon}\delta^{-C}$. Thus every third-cumulant contribution is bounded by the error in \eqref{eq:moment-recursion}. The common entry cumulant is independent of $i$, as required by the row-sum argument. This is where the iid assumption, rather than symmetry, is used.

\proofstep{Fourth cumulant: the non-Gaussian covariance term.}
The exact first derivative is
\[
 \partial_{ki}G^1_{iw_1}
 =-G^1_{ii}G^1_{s_kw_1}-G^1_{is_k}G^1_{iw_1}.
\]
For a differentiated external factor, we have
\begin{align*}
 \partial_{ki}^2G^r_{v_rw_r}
 ={}&2G^r_{v_rs_k}G^r_{ii}G^r_{s_kw_r}
 +2G^r_{v_rs_k}G^r_{is_k}G^r_{iw_r}
 +2G^r_{v_ri}G^r_{s_ks_k}G^r_{iw_r}
 +2G^r_{v_ri}G^r_{s_ki}G^r_{s_kw_r}.
\end{align*}
Only the first summand in each display has no mixed-block entry. Taking these two summands corresponds to one derivative on $G^1_{iw_1}$ and two derivatives on the same external factor. The multiplicity three and the factor two cancel the $3!$ in the cumulant coefficient. Replacing these factors by their deterministic equivalents and summing over $i$ yields the second line of \eqref{eq:leading-covariance}. The derivative's minus sign and the three negative top deterministic Green factors give a positive sign. The remaining $N-2$ centered factors are undifferentiated.

A replacement error adds a coarse fluctuation while preserving the summable top endpoints. The other allocations either leave all $N-1$ centered factors, contain at least two mixed entries after touching at most two external factors, or touch three external factors and contain at least four mixed entries. Each has at least one additional factor $n^{-1/2+C_N\epsilon}$ relative to the leading order $n^{-N/2}$.

\proofstep{Higher cumulants and a complete power count.}
For a cumulant of order $j\ge5$, let $s_0$ derivatives hit the original mixed entry, let $d$ distinct external factors be differentiated with positive orders $s_1,\ldots,s_d$, and let $t$ be the number of mixed entries in a resulting product. The rank-two derivative switches blocks, so
\[
 s_0+\sum_{a=1}^d s_a=j-1,
 \qquad t\ge\ind_{\{s_0\text{ is even}\}}+
                         \#\{a:s_a\text{ is odd}\}.
\]
There are $N-1-d$ undifferentiated centered factors. If a fixed top endpoint is attached to $s_k$, its coefficient contracts with $c_k$ by the Cauchy--Schwarz inequality. All other factors may be bounded uniformly, with arbitrarily small powers of $n$ obtained from fixed higher moments and a finite polynomial union. The total power is at most
\begin{equation}\label{eq:word-count}
 n^{-j/2}\,n\,n^{-t/2}n^{-(N-1-d)/2}
 n^{C_N\epsilon}\delta^{-C_N}
 =n^{-N/2+(3-j+d-t)/2+C_N\epsilon}\delta^{-C_N}.
\end{equation}
An even positive $s_a$ is at least two; hence the integer $j-d+t$ is at least $(j+1+s_0)/2+\ind_{\{s_0\text{ is even}\}}\ge4$. Therefore \eqref{eq:word-count} has an extra factor $n^{-1/2}$. If no such top endpoint contraction is available, each differentiated external factor has both fixed top endpoints attached to bottom indices, giving $t\ge1+2d$. The remaining latent sum costs at most $\sqrt p$, since $\norm c_2\le C$. Replacing $3$ by $4$ in \eqref{eq:word-count} still gives a smaller order, because $4-j+d-t\le3-j-d\le-2$.

These counts also cover the nonleading fourth-cumulant terms after the allocation just extracted is removed. For the third cumulant, the exceptions to the crude count are exactly the one-mixed-entry terms handled by the row-sum calculation. The contractions used throughout are $\sum_k|(s_k^\top v)(s_k^\top w)|\le L\norm v_2\norm w_2$, $\sum_k|c_k|\le\sqrt p\norm c_2$, and $\sum_iG_{iw}=\sqrt nG_{\bar e,w}$. When endpoint vectors involve the resolvent, the first contraction uses their operator-norm bounds; the remaining coarse fluctuations are bounded uniformly before contracting. H\"older's inequality involves only finitely many moments at fixed $N$.

This proves \eqref{eq:moment-recursion}. Iteration removes pairs, with each leading coefficient bounded by $Cn^{-1}\delta^{-C}$. All errors retain the extra factor $n^{-1/2+C'_N\epsilon}$ relative to $n^{-N/2}$. Choose the preliminary exponent so that $C'_N\epsilon<1/4$. Odd moments terminate at a centered factor and have no leading term. This proves \eqref{eq:Wick}.
\end{proof}

\subsection{A uniform integral bound for the leading covariance}
\begin{lemma}[Integrated leading kernel]\label{lem:integrated-kernel}
For the calibrated contour and any deterministic continuous weight $\psi$, the leading covariance of its integrated signal fluctuation, and the leading covariance of its noise integral, have absolute value at most $C\norm\psi_{\infty,\Gamma_\delta}^2/n$. This bound is uniform as $\delta$ decreases in the logarithmic window. The non-Gaussian fourth-cumulant term obeys the same bound separately in absolute value.
\end{lemma}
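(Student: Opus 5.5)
The plan is to turn the double contour integral of the leading covariance \eqref{eq:leading-covariance} into an expression in the calibration variable $\mu$. In that variable every factor except one is bounded uniformly in $\delta$. The remaining denominator will be controlled by an $L^2$ estimate of Laplace-transform type, with no pointwise bound used on it. A pointwise bound would give $|C_{ij}|\lesssim\delta^{-1}/n$, hence a spurious factor $\log n$ in the logarithmic window, and this is what the lemma must avoid.

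\textbf{Change of variables and bounded factors.} For $z_j=\zeta(\mu_j)$ with $\mu_j\in\Gamma_\delta$, Proposition~\ref{prop:calibration} gives $m_j=-1/\mu_j$. Hence $D_j^{-1}=-\mu_j(\Sigma-\mu_jI_p)^{-1}$, whose operator norm is bounded by $C$ uniformly, since $|\lambda-\mu_j|\ge(\sqrt\gamma-1)\ell$ on the contour. Therefore $M_{ij}$, the endpoint contractions $v^\top D_i^{-1}s_k$, and the fourth-cumulant sum are all bounded. Using $\sum_k|s_k^\top a||s_k^\top b|\le L\norm a_2\norm b_2$, the $\chi_4$ term in \eqref{eq:leading-covariance} is $O(1/n)$ pointwise, so its double integral is at most $C\norm\psi_{\infty,\Gamma_\delta}^2/n$. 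This settles the non-Gaussian part separately, as the lemma asserts.

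\textbf{The singular factor.} By \eqref{eq:zeta-difference},
\[
 S_{ij}=\frac{m_i-m_j}{z_i-z_j}=\frac{1}{\mu_i\mu_j}\,\frac{\mu_i-\mu_j}{\zeta(\mu_i)-\zeta(\mu_j)}=\frac{1}{\mu_i\mu_j\bigl(1-Q_{ij}\bigr)},\qquad Q_{ij}=\frac1n\sum_l\frac{\lambda_l^2}{(\lambda_l-\mu_i)(\lambda_l-\mu_j)}.
\]
The only obstruction is therefore $(1-Q_{ij})^{-1}$. By Lemma~\ref{lem:calibration-control} and Cauchy--Schwarz, $|Q_{ij}|\le(1+\delta)^{-2}$, and $Q_{ij}$ can approach $1$ only near the real point $\mu_0=L-R_\delta$, where the triangle inequality of Proposition~\ref{prop:calibration} is nearly tight. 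For the Hermitian covariance, the second argument is conjugated. Then $Q$ becomes a positive-semidefinite kernel of the form $Q(\mu_1,\bar\mu_2)=\sum_l a_l(\mu_1)\overline{a_l(\mu_2)}$, with $a_l(\mu)=\lambda_l/(\sqrt n(\lambda_l-\mu))$. I would expand
\[
 (1-Q)^{-1}=\sum_{r\ge0}Q^r=\int_0^\infty e^{-s}e^{sQ}\,ds .
\]
Each power $Q^r$ is a sum of separable products $\prod a_{l_t}(\mu_1)\overline{a_{l_t}(\mu_2)}$. Consequently the double integral against $\Psi(\mu_1)\overline{\Psi(\mu_2)}$ equals a sum of squared moduli of single integrals, which is nonnegative. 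Here $\Psi$ collects $\psi$, the Jacobian $\zeta'$, $1/\mu$, and the bounded endpoint vectors, and the two bilinear products in \eqref{eq:leading-covariance} are treated after polarization.

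It then remains to bound the quadratic form of the kernel $(1-Q)^{-1}$ on $L^2(\Gamma_\delta)$ uniformly in $\delta$. Near $\mu_0$, with arclength coordinates $t_1,t_2$, I expect $1-Q\approx c_1\delta+c_2(t_1^2+t_2^2)+\ii c_3(t_1-\bar t_2\ \text{terms})$, i.e.\ a kernel of Carleman/Hankel type $1/(\delta+\ii(t_1+t_2)+\dots)$. Its Laplace representation $\int_0^\infty e^{-s\delta}e^{-\ii s(t_1+t_2)}\,ds$ yields the form $\int_0^\infty e^{-s\delta}|\widehat\Psi(s)|^2\,ds\le 2\pi\norm\Psi_{L^2}^2$, which is bounded uniformly in $\delta$. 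Away from $\mu_0$, the factor $1-Q$ is bounded below and the kernel is harmless.

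\textbf{Noise term.} The noise integral has conditional covariance $\sigma^2n^{-2}(Xv)^\top G_{22}(z_1)G_{22}(z_2)(Xv)$. Through \eqref{eq:resolvent-difference} this again reduces to the difference quotient $(\Pi(z_1)-\Pi(z_2))/(z_1-z_2)$, so it has exactly the same singular factor $S_{12}$ times bounded resolvents of $\Sigma$. The same argument then applies.

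\textbf{Main obstacle.} The hard step is making the local normal form of $1-Q$ near $\mu_0$ rigorous, uniformly over all spectra in $[\ell,L]$: the relevant Hilbert--Hankel operator must be bounded with a constant independent of $\delta$ and of $\Sigma$. If the local expansion proves too delicate, I would instead prove a direct operator bound $\norm{(I-\mathcal Q)^{-1}}\le C$ for the integral operator $\mathcal Q$ with kernel $Q$ on $L^2(\Gamma_\delta,|d\mu|)$. The intended route is to show that its spectral radius stays below $1$ by a fixed margin, using $\int_{\Gamma_\delta}|a_l(\mu)|^2\,|d\mu|$ and the fact that the set where $|Q|\approx1$ has arclength $O(\sqrt\delta)$.
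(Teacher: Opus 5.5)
\textbf{There is a genuine gap in the key step.} Your reductions are sound as far as they go. With $m_j=-1/\mu_j$ you correctly get $S_{ij}=1/(\mu_i\mu_j(1-Q_{ij}))$, and the remaining factors are bounded. The $\chi_4$ term is pointwise $O(1/n)$, which is essentially how the paper handles it. Expanding $(1-Q)^{-1}=\sum_rQ^r$ into separable products gives positivity.

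But the whole content of the lemma is a bound on the quadratic form of the kernel $(1-Q(\mu_1,\bar\mu_2))^{-1}$ that is uniform in both $\delta$ and $\Sigma$. That step is not supplied, and the route you sketch rests on a false localization. The degeneracy of $1-Q$ is not concentrated near $\mu_0=L-R_\delta$:
\begin{itemize}
\item Take $\Sigma=LI_p$, which is in the class, and $\mu_k=L+R_\delta e^{\ii t_k}$. Then $Q(\mu_1,\bar\mu_2)=(1+\delta)^{-2}e^{-\ii(t_1-t_2)}$, so $|1-Q|\approx2\delta$ along the entire diagonal of $\Gamma_\delta\times\Gamma_\delta$.
\item The inequality chain in Proposition~\ref{prop:calibration} is nearly tight for eigenvalues near $L$ at every point of the circle. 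For eigenvalues bounded below $L$ it loses a fixed amount $(\sqrt\gamma-1)(L-\lambda)$, even at $\mu_0$. So spectral mass near $L$, not proximity to $\mu_0$, drives the singularity.
\item Consequently, ``away from $\mu_0$ the kernel is harmless'' fails. The singular kernel is of difference (Szeg\H{o}/Cauchy) type in $t_1-t_2$, not Hankel type in $t_1+t_2$. Your Laplace identity yielding $|\widehat\Psi(s)|^2$ in fact requires the difference.
\end{itemize}
Uniformity over intermediate spectra, which you flag as the main obstacle, is exactly what remains open. Your fallback does not address it either. The kernel $1/(1-Q)=\sum_rQ^r$ involves pointwise (Schur) powers, so a bound on the integral-operator resolvent $(I-\mathcal Q)^{-1}$ gives no control of its quadratic form.

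\textbf{The missing idea is a global kernel comparison.}
\begin{itemize}
\item Use the M\"obius coordinate $w=\sqrt\gamma L/(L-\mu)$. It maps the contour $\Gamma_\delta$ to $|w|=(1+\delta)^{-1}$ and gives $Q=\sum_lf_l(w_1)\overline{f_l(w_2)}$, where $f_l(w)=p^{-1/2}b_lw/(1-(1-b_l)w/\sqrt\gamma)$ and $b_l=\lambda_l/L$.
\item The elementary inequality $1-(1-b_l)/\sqrt\gamma\ge b_l$ gives $\sum_l|f_l(w)/w|^2\le1$ on the unit disk. Hence the row of multipliers $(f_l/w)_l$ is contractive on $H^2$.
\item Telescoping the positive map $A\mapsto\sum_l\mathcal M_lA\mathcal M_l^*$ then yields the positive-semidefinite kernel domination $1/(1-\sum_lf_l(w)\overline{f_l(\xi)})\preceq1/(1-w\bar\xi)$. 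This holds uniformly over all admissible $\Sigma$.
\item Bounded analytic multipliers preserve this order. Parseval on $|w|=(1+\delta)^{-1}$ then bounds the resulting Szeg\H{o} double integral by $\sum_{j\ge0}(1+\delta)^{-2j}|\widehat b(j)|^2\le\norm b_{L^2}^2$, independent of $\delta$.
\end{itemize}
This is the rigorous, global version of your Laplace heuristic. The isotropic example above is precisely the case in which the comparison is an equality.
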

\begin{proof}
We express each leading covariance as a sum of products of analytic functions, compare its principal kernel with the Hardy-space kernel, and then calculate its contour integral by Fourier coefficients.

\proofstep{Hardy space and the kernel comparison.}
Write $\Sigma=\sum_j\lambda_j o_jo_j^{\top}$ with an orthonormal eigenbasis $(o_j)$, and set $b_j=\lambda_j/L$. The disk coordinate $w=\sqrt\gamma L/(L-z)$ takes the observation contour to $|w|=(1+\delta)^{-1}$. Define
\[
 f_j(w)=\frac1{\sqrt p}\frac{b_jw}{1-(1-b_j)w/\sqrt\gamma},
 \qquad
 K_f(w,\xi)=\frac1{1-\sum_j f_j(w)\overline{f_j(\xi)}}.
\]
Since $1-(1-b_j)/\sqrt\gamma\ge b_j$, we have
$\sup_{|w|<1}\sum_j|f_j(w)/w|^2\le1$.

The Hardy space $H^2(\mathbb D)$ is the Hilbert space of holomorphic functions $h(w)=\sum_{a\ge0}h_aw^a$ with $\norm h_{H^2}^2=\sum_{a\ge0}|h_a|^2<\infty$. Its inner product is linear in the first argument. Evaluation at $\xi$ is represented by $k_\xi(w)=(1-w\bar\xi)^{-1}$, as follows by summing the coefficient inner product. For two kernels $K_1,K_2$, the notation $K_1\preceq K_2$ means
\[
 \sum_{a,b=1}^N c_a\bar c_b
 (K_2(w_a,w_b)-K_1(w_a,w_b))\ge0
\]
for every finite choice of points and complex coefficients. This is a positive-semidefinite-kernel order, not a pointwise order of complex numbers.

Let $\mathcal M_j$ be multiplication by $f_j$, let $S$ be multiplication by $w$, and let $\mathsf P_0$ project onto constants. For any finite collection $h_j\in H^2$, the pointwise Cauchy--Schwarz inequality on circles, followed by the limit as their radii tend to one, gives
$\norm{\sum_j(f_j/w)h_j}_{H^2}^2\le\sum_j\norm{h_j}_{H^2}^2$.
It follows that $\sum_j\mathcal M_j\mathcal M_j^*\preceq SS^*=I-\mathsf P_0$. For the positive map $\mathcal T(A)=\sum_j\mathcal M_jA\mathcal M_j^*$, telescoping gives
\[
 \sum_{a=0}^N\mathcal T^a(\mathsf P_0)
 \preceq\sum_{a=0}^N\mathcal T^a(I-\mathcal T(I))
 =I-\mathcal T^{N+1}(I)\preceq I.
\]
The kernel of the left side is the partial geometric series in $\sum_jf_j(w)\overline{f_j(\xi)}$. Letting $N\to\infty$ proves
\[
 K_f(w,\xi)\preceq\frac1{1-w\bar\xi}.
\]
If $A(w)$ is analytic and bounded by $C$, multiplication by $A$ on $H^2$ has norm at most $C$, by the same circle argument. Multiplying both kernel arguments by $A$ and $\bar A$ therefore preserves this comparison with an additional factor $C^2$. These elementary Hardy-space facts have been proved here to fix the conventions used in the integral.

\proofstep{The covariance factors and the contour parameter.}
After diagonalizing only the deterministic covariance matrices, the signal covariance is a sum of $K_f/n$ multiplied by
\[
 A_{jk}(z)\overline{A_{jk}(z')},\qquad
 A_{jk}(z)=\frac{m(\zeta(z))\sqrt{\lambda_j\lambda_k}}
 {(1+\lambda_jm(\zeta(z)))(1+\lambda_km(\zeta(z)))}.
\]
For an orthonormal eigenbasis $o_1,\ldots,o_p$ of $\Sigma$, the nonnegative coefficients are
\[
 ((o_j^{\top}v)(o_k^{\top}\beta)+(o_k^{\top}v)(o_j^{\top}\beta))^2\quad(j<k),
 \qquad 2((o_j^{\top}v)(o_j^{\top}\beta))^2\quad(j=k).
\] Their sum is $\norm v_2^2\norm\beta_2^2+(v^{\top}\beta)^2\le2B_0^2$. For the noise, use $A_j(z)=m(\zeta(z))\sqrt{\lambda_j}/(1+\lambda_jm(\zeta(z)))$ and coefficients $\sigma^2(o_j^{\top}v)^2$, which sum to at most $\bar\sigma^2$. Since $m(\zeta(z))=-1/z$, all these factors, as functions of $w$, are analytic and uniformly bounded on a disk strictly larger than the unit disk.

To integrate, parameterize the contour counterclockwise by $z(t)=L-R_\delta e^{\ii t}$, $0\le t\le2\pi$. Then $w(t)=(1+\delta)^{-1}e^{-\ii t}$ and $z'(t)=-\ii R_\delta e^{\ii t}$. For one analytic multiplier $A$, define the ordinary periodic function
\[
 b(t)=\frac{z'(t)}{\ii}\,\psi(z(t))A(z(t)),
 \qquad \widehat b(j)=\frac1{2\pi}\int_0^{2\pi}b(t)e^{-\ii jt}\,dt.
\]
The factor $z'(t)/\ii$ is exactly the change from $dz/(2\pi\ii)$ to $dt/(2\pi)$ in the contour integral. It satisfies $|z'(t)/\ii|=R_\delta$, so $\norm b_{L^2(dt/(2\pi))}\le C\norm\psi_{\infty,\Gamma_\delta}$. Expanding the geometric series and using Parseval's identity (\citealp[Section~9.5, pp.~180--182]{WhittakerWatson1920}) gives
\[
 \int_0^{2\pi}\!\int_0^{2\pi}
 \frac{b(t)\overline{b(s)}}{1-(1+\delta)^{-2}e^{-\ii(t-s)}}
 \frac{dt\,ds}{(2\pi)^2}
 =\sum_{j\ge0}(1+\delta)^{-2j}|\widehat b(j)|^2
 \le\frac1{2\pi}\int_0^{2\pi}|b(t)|^2dt.
\]
The equality can first be verified for finite Fourier sums and then obtained by $L^2$ approximation; it is the coefficient form of Parseval's identity. Summing the bounded nonnegative covariance coefficients proves the required $C\norm\psi_{\infty,\Gamma_\delta}^2/n$ bound.

\proofstep{Fourth cumulants and products without conjugation.}
The fourth-cumulant contribution is $\chi_4\sum_j c_j(z)\overline{c_j(z')}/n$, where
\[
 c_j(z)=m(\zeta(z))
 [\Sigma^{1/2}(I+m(\zeta(z))\Sigma)^{-1}v]_j
 [\Sigma^{1/2}(I+m(\zeta(z))\Sigma)^{-1}\beta]_j.
\]
The two vectors in brackets have bounded Euclidean norm on the contour. The Cauchy--Schwarz inequality implies $\sum_j|c_j(z)|^2\le C$. Consequently, we have
\begin{align*}
 &\frac{|\chi_4|}{n}\sum_j\left|
 \frac1{2\pi\ii}\oint\psi(z)c_j(z)\,dz\right|^2\\
 &\qquad\le\frac{|\chi_4|}{n}
 \left(\frac1{2\pi}\oint|\psi(z)|
                  \Big(\sum_j|c_j(z)|^2\Big)^{1/2}|dz|\right)^2
 \le\frac Cn\norm\psi_{\infty,\Gamma_\delta}^2.
\end{align*}
This uses Minkowski's integral inequality for the Euclidean norm and is valid for either sign of $\chi_4$.

For clarity, the same bound also covers the bilinear, nonconjugated coefficients in a product of real and imaginary parts. The principal kernel has the convergent feature expansion
$K_f(w,\xi)=\sum_{r\ge0}\sum_{j_1,\ldots,j_r}
\prod_{a=1}^r f_{j_a}(w)\overline{f_{j_a}(\xi)}$.
After incorporating a multiplier and integrating, denote these feature integrals by $c_{r,j_1,\ldots,j_r}(\psi)$. The calculation above bounds the sum of their squared absolute values by $C\norm\psi_\infty^2$. For two weights $\psi_1,\psi_2$, both
$\sum c(\psi_1)\overline{c(\psi_2)}$ and $\sum c(\psi_1)c(\psi_2)$ are bounded in absolute value by
$C\norm{\psi_1}_\infty\norm{\psi_2}_\infty$, by the Cauchy--Schwarz inequality for these sequences. Real symmetry of the deterministic resolvent identifies the second expression with the covariance without conjugation. The fourth-cumulant term has the identical argument with its finite sequence of integrals. Complex endpoint vectors can be decomposed into real and imaginary parts before applying these bounds. Thus every pair coefficient used in the fixed-moment expansion has the asserted absolute integrated bound.
\end{proof}

\subsection{The integrated field and its fixed moments}\label{sec:pf:lem:field-moments}

\begin{proof}[Proof of Lemma~\ref{lem:field-moments}]
We integrate the centered design moments, bound the conditional Gaussian noise, and finally include the mean and inverse-calibration errors.
\proofstep{Integrated design fluctuation.}
First evaluate at deterministic calibrated arguments. Apply Lemma~\ref{lem:recursion} to $2s$ centered bilinear transfer factors, where $2s\ge r$ is a fixed even integer. The added identity in the top block of \eqref{eq:linearization} does not affect centering. Integrate all $2s$ arguments, multiplying by the contour weights. Every pair partition factors into $s$ integrated leading covariances. Lemma~\ref{lem:integrated-kernel} bounds their product by $C_sn^{-s}\norm\psi_\infty^{2s}$. The integrated remainder is at most $C_sn^{-s-1/4}\delta^{-C_s}\norm\psi_\infty^{2s}$. Since $\delta^{-C_s}\le(\log n)^{C_s}$, it is absorbed into the same bound. Real and imaginary parts are obtained by expanding each as half the sum or difference of a field and its conjugate. Lyapunov's inequality then gives the required $r$th-moment bound for the integrated centered design field. 

\proofstep{Conditional noise covariance.}
At deterministic arguments $\xi,\eta$, the bilinear conditional second moment of the noise fields is $\sigma^2 v^\top \mathcal X(\widehat S-\xi I)^{-1}(\widehat S-\eta I)^{-1}\mathcal X^\top v/n$. By \eqref{eq:resolvent-difference}, it is the divided difference of the transfer. The coarse anisotropic estimate in Lemma~\ref{lem:preliminary}, transferred to a high-probability uniform bound on the tubes by high moments and a net, replaces it by $\sigma^2S_{12}v^\top M_{12}v/n$, with error $O(n^{-3/2+\epsilon}\delta^{-C})$. Near coincident arguments use the derivative bound in \eqref{eq:Cauchy-derivative} on a smaller disk instead of division by an arbitrarily small difference. The leading deterministic integrated covariance is bounded by Lemma~\ref{lem:integrated-kernel}; the error is smaller. Hence, on an $X$-measurable event,
\[
 \Var\left(\operatorname{Re}\frac1{2\pi\ii}\oint \psi(z)
       (\widehat F(z)-\E(\widehat F(z)\mid X))\dd z\,\middle|\,X\right)
 \le C\sigma^2\norm\psi_\infty^2/n.
\]
The same holds for the imaginary part. Conditional Gaussianity converts this variance bound to every fixed absolute moment. It does not require independent coordinate estimates.

\proofstep{Mean and empirical calibration.}
The refined deterministic-parameter mean error from Lemma~\ref{lem:mean} contributes at most $C\norm\psi_\infty n^{-1+\epsilon}\delta^{-C}$. Lemma~\ref{lem:empirical-control} changes the signal part by at most $C\norm\psi_\infty n^{-3/4}\delta^{-C}$ on the common root event, using the transfer derivative norm on the spectral tube. The coefficient row of the noise change has norm at most $C\norm\psi_\infty n^{-5/4}\delta^{-C}$, since $\norm{X/\sqrt n}_\op\le C$. Its conditional Gaussian moments are of smaller order than $n^{-r/2}\norm\psi_\infty^r$. These errors can all be absorbed because an extra negative power of $n$ dominates any fixed power of $\log n$.

Intersect the spectral, trace, and directional events with failure exponent chosen larger than the desired $D$ and any fixed net or loading-set size. The event depends only on $X$ and so does not truncate the Gaussian noise. On it the smooth deterministic field equals the actual field before root replacement. On its complement polynomial smooth bounds and high moments justify the expectations used in Lemma~\ref{lem:recursion}. The implemented estimator does not inspect this parameter-dependent event; it uses only the branch rule and clipping. Combining the three parts proves \eqref{eq:field-moment}.
\end{proof}

\section{The scalar lower-bound experiments}\label{app:lower}
We first take $\ell=\kappa^{-1}$, $L=1$, and $v=e_1$, so the target is $\beta_1$. In the final regression, write $X=(X_1,X_{-1})$ and $\beta=(\beta_1,\beta_{-1}^{\top})^{\top}$, where $X_1\in\R^n$ and $X_{-1}\in\R^{n\times(p-1)}$. We construct this experiment from a regression with only $p-1$ covariates. The latter design and response are denoted by $(\mathsf X,\mathsf Y)$ throughout this Section.

\subsection{Definition of the two hypotheses}
Set $d=p-1$, $\gamma^\circ=d/n=\gamma-1/n$, and $\upsilon=n^{-1/8}$. The preliminary experiment has the same sample size $n$ and $d$ covariates. Under \eqref{eq:aspect-regime}, $\gamma^\circ$ remains uniformly larger than one and bounded above for all sufficiently large $n$. The final experiment, after adding the constructed first covariate, has dimension $p$ and ratio $\gamma$ as required by $\mathcal G(n,p,\gamma,\kappa)$. Define
\[
 \rho=\frac{\sqrt{\gamma^\circ}-1+\kappa^{-1}+\upsilon}
            {\sqrt{\gamma^\circ}+1-\kappa^{-1}-\upsilon},
 \qquad h=\frac{\pi\Kell(\rho^2)}{\Kell(1-\rho^2)}.
\]
The number $h$ is the logarithmic annulus width for the disk $|z-1|<\sqrt{\gamma^\circ}$ with the interval $[\kappa^{-1}+\upsilon,1]$ removed. To distinguish this geometry from the upper-bound geometry, let $\mathfrak u$ solve
\[
 \Delta\mathfrak u=0\quad\text{on }
 \{z:|z-1|<\sqrt{\gamma^\circ}\}\setminus[\kappa^{-1}+\upsilon,1],
 \qquad \mathfrak u|_{[\kappa^{-1}+\upsilon,1]}=0,
 \qquad \mathfrak u|_{|z-1|=\sqrt{\gamma^\circ}}=1.
\]
Let $\widetilde{\mathfrak u}$ be its harmonic conjugate with $\widetilde{\mathfrak u}(0)=0$, and set $\mathfrak q=h(\mathfrak u+\ii\widetilde{\mathfrak u})$. Lemma~\ref{lem:geometry}, applied with the displayed radius and interval, gives imaginary period $2\pi\ii$ and the real target value
\[
 \mathfrak q(0)=\frac{\pi}{\Kell(1-\rho^2)}
 \left[\Kell(\rho^2)-\Fell\left(
 \arcsin\frac{\sqrt{\gamma^\circ}-1}{\rho(\sqrt{\gamma^\circ}+1)}
 \,\middle|\,\rho^2\right)\right].
\]
Thus $\mathfrak q$ is a different, explicitly specified potential; the symbol $q_\delta$ continues to refer only to the upper-bound construction.

\paragraph{Spectral weights.}
Choose
\begin{equation}\label{eq:lower-degree}
 K=\left\lceil\frac{\Kell(1-\rho^2)\log n}{2\pi\Kell(\rho^2)}\right\rceil
   =\left\lceil\frac{\log n}{2h}\right\rceil.
\end{equation}
Take the positive finite atomic measures $\nu_0,\nu_1$ supplied by Lemma~\ref{lem:lower-measure}. List the distinct points in their two supports as
\[
 \operatorname{supp}(\nu_0)\cup\operatorname{supp}(\nu_1)
   =\{\lambda_1,\ldots,\lambda_s\}\subset(\kappa^{-1}+\upsilon,1).
\]
For $j=0,1$, let $w_{jk}=\nu_j(\{\lambda_k\})$, so that
$\nu_j=\sum_{k=1}^s w_{jk}\delta_{\lambda_k}$.
The weights $w_{jk}\ge0$ are the masses of these measures, including zero when a point belongs only to the other support. The construction gives $s\le CK$. Define
\[
 \mathsf D=\diag\left(\frac{1-\lambda_1}{\lambda_1},\ldots,
                    \frac{1-\lambda_s}{\lambda_s}\right),
 \qquad t_k=\frac{d-s-1-n(1-\lambda_k)}d.
\]
For all sufficiently large $n$, we have $\min_k t_k\ge c>0$, because $\gamma^\circ$ is bounded away from one and $s=O(\log n)$.

\paragraph{The mixing distribution.}
On $V\in\R^{d\times s}$, define the probability density
\begin{equation}\label{eq:Vprior}
 \pi(V)=\frac1{Z_\pi}
 \det(I_d+V\mathsf DV^{\top})^{-n/2}
 \exp\left(-\frac d2\tr(\diag(t_1,\ldots,t_s)V^{\top}V)\right).
\end{equation}
The corresponding covariance is $\Sigma_-(V)=(I_d+V\mathsf DV^{\top})^{-1}$.
Here $Z_\pi\in(0,\infty)$ is the normalizing integral. It is finite because the determinant factor is at most one and the remaining factor has a positive definite Gaussian exponent. Write $\mathsf Q=V^{\top}V$, $\mathcal C=\{V:\norm{\mathsf Q-I_s}_\op\le\upsilon^2\}$, and $\pi_{\mathcal C}(dV)=\pi(dV\mid\mathcal C)$. Lemma~\ref{lem:prior-concentration} shows that $\pi(\mathcal C)>0$ and bounds $\pi(\mathcal C^c)$.

\paragraph{Conditional regressions and the final data laws.}
Set $s_0=(1+\kappa^{-1})/4$. For a fixed amplitude $\tau>0$, to be chosen sufficiently small, define
\begin{equation*}
 \vartheta_j=(\sqrt{\tau w_{j1}},\ldots,\sqrt{\tau w_{js}})^{\top},
 \qquad \varpi_j=s_0-\tau\int\lambda\,\nu_j(d\lambda),
 \qquad b_j=\frac{\varpi_j-s_0}{\varpi_j+s_0},\quad j=0,1.
\end{equation*}
Choose $\tau$ so that $s_0/2\le\varpi_j\le s_0$. Given $V$, the $(p-1)$-covariate regression is
\[
 \mathsf X_i\sim N_d(0,\Sigma_-(V)),\qquad
 \mathsf Y_i=\mathsf X_i^{\top}V\vartheta_j+\varepsilon_i^\circ,
 \qquad \varepsilon_i^\circ\sim N(0,\varpi_j),
\]
independently for $i=1,\ldots,n$, with the errors independent of $\mathsf X$. Its data law mixed over $V\sim\pi$ is denoted by $\widetilde P_j$, and its data law mixed over $V\sim\pi_{\mathcal C}$ by $\widetilde P_j^{\mathcal C}$.

Generate $\xi\sim N(0,s_0I_n)$ independently, and apply the same transformation under both hypotheses:
\begin{equation}\label{eq:final-transform}
 X_1=\mathsf Y+\xi,\qquad X_{-1}=\mathsf X,\qquad Y=\mathsf Y-\xi.
\end{equation}
Given $V$, the final Gaussian regression has parameters
\begin{align}\label{eq:final-parameters}
 \Sigma_j(V)&=
 \begin{pmatrix}
 \vartheta_j^{\top}V^{\top}\Sigma_-(V)V\vartheta_j+\varpi_j+s_0
       &\vartheta_j^{\top}V^{\top}\Sigma_-(V)\\
 \Sigma_-(V)V\vartheta_j&\Sigma_-(V)
 \end{pmatrix},\notag\\
 \beta^{(j)}(V)&=(b_j,(1-b_j)(V\vartheta_j)^{\top})^{\top},
 \qquad \sigma_j^2=\frac{4\varpi_js_0}{\varpi_j+s_0}.
\end{align}
Let $\mathbb Q_j$ be the distribution of these parameters when $V\sim\pi_{\mathcal C}$. Finally define $P_j$ to be the law of the full data under this prior: for every measurable data event $E$,
\[
 P_j(E)=\int
 \Pr_{\Sigma_j(V),\beta^{(j)}(V),\sigma_j}((X,Y)\in E)\,
 \pi_{\mathcal C}(dV),\qquad j=0,1.
\]
The inner probability is the product Gaussian regression experiment with the fixed displayed parameter. 

\begin{remark}
\citet[Appendix~D.2.1, proof of Propositions~14--15 and Corollary~5]{KongValiant2018} use Gaussian regressions with $\Sigma=(I+\sum_k a_kv_kv_k^{\top})^{-1}$, a coefficient in the span of the $v_k$, and a determinant factor $\det(I+\sum_k a_kv_kv_k^{\top})^{-n/2}$ in the mixing density. They compare conditional response laws by matching finitely many spectral moments, following \citet[proof of Theorem~4.5]{VerzelenGassiat2018}. The determinant cancellation in \eqref{eq:Vprior} is the same mechanism. Here the matrices $\mathsf D$ and the density $\pi$ are identical under the two hypotheses. The factors $t_k$ place the mode of $V^{\top}V$ exactly at $I_s$. The measures $\nu_j$ control a rational transform on the complete spectral support, rather than only finitely many monomials. Their two endpoint-weighted bounds ensure the exact inequalities $\kappa^{-1}I_p\preceq\Sigma_j(V)\preceq I_p$ after \eqref{eq:final-transform}. These are the modifications needed for the stated proportional-dimensional rate.
\end{remark}

\subsection{The supported priors and their separation}
\begin{lemma}[Supported final priors]\label{lem:lower-priors}
With the degree \eqref{eq:lower-degree}, there is a fixed $\tau>0$ such that, for all sufficiently large $n$ and every $V\in\mathcal C$, the parameters in \eqref{eq:final-parameters} satisfy
\begin{align*}
 &\kappa^{-1}I_p\preceq\Sigma_j(V)\preceq I_p,
 \qquad \norm{\beta^{(j)}(V)}_2\le C\sqrt\tau,\\
 &\frac{1+\kappa^{-1}}3\le\sigma_j^2\le\frac{1+\kappa^{-1}}2,
 \qquad j=0,1.
\end{align*}
Their data laws satisfy $\operatorname{TV}(P_0,P_1)\le1/3$. Moreover,
\[
 \mathbb Q_j(\beta_1=b_j)=1,\qquad
 |b_1-b_0|\ge c e^{-K\mathfrak q(0)}
             \asymp n^{-\alpha(\kappa,p/n)/2}.
\]
\end{lemma}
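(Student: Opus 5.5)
The plan is to prove the three assertions of Lemma~\ref{lem:lower-priors} in increasing order of difficulty: first the parameter constraints, then the coefficient identity and separation, and last the total-variation bound.

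\emph{Parameter constraints.} The variance bound is immediate. With $\varpi_j\in[s_0/2,s_0]$, the map $\varpi\mapsto4\varpi s_0/(\varpi+s_0)$ is increasing, so $\sigma_j^2\in[4s_0/3,2s_0]=[(1+\kappa^{-1})/3,(1+\kappa^{-1})/2]$. For $\beta^{(j)}$ I will use two facts. First, $|b_j|\le C\tau$, since $\varpi_j-s_0=-\tau\int\lambda\,\nu_j$. Second, on $\mathcal C$ we have $\norm{V\vartheta_j}_2^2=\vartheta_j^\top\mathsf Q\vartheta_j\le2\tau\,\nu_j(\R)$. Together these give $\norm{\beta^{(j)}}_2\le C\sqrt\tau$.

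For $\Sigma_j(V)$, write it in the block form $\bigl(\begin{smallmatrix}a&c^\top\\c&\Sigma_-\end{smallmatrix}\bigr)$, with $c=\Sigma_-V\vartheta_j$.
\begin{itemize}
\item The Schur complement of $\Sigma_-$ is exactly $\varpi_j+s_0$, which lies in $[3s_0/2,2s_0]\subset(\kappa^{-1},1)$.
\item On $\mathcal C$ the nonunit eigenvalues of $\Sigma_-=(I+V\mathsf DV^\top)^{-1}$ are within $O(\upsilon^2)$ of the $\lambda_k$. These lie in $(\kappa^{-1}+\upsilon,1)$, so $\kappa^{-1}I\preceq\Sigma_-\preceq I$ holds with a margin of order $\upsilon$ at the lower end.
\item The condition $\Sigma_j\preceq I$ is equivalent to $1-a\ge c^\top(I-\Sigma_-)^{-1}c$. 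Expanding in the eigenbasis of $V\mathsf DV^\top$ turns the right side into approximately $\tau\int\lambda^2/(1-\lambda)\,\nu_j(d\lambda)$ plus $\tau\int\lambda\,\nu_j$.
\item Similarly, $\Sigma_j\succeq\kappa^{-1}I$ reduces to a bound on $\tau\int\lambda^2/(\lambda-\kappa^{-1})\,\nu_j(d\lambda)$.
\end{itemize}
These are the endpoint-weighted integrals which Lemma~\ref{lem:lower-measure} keeps bounded. Choosing $\tau$ small makes both inequalities exact, uniformly over $V\in\mathcal C$.

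\emph{Coefficient identity and separation.} Given $V$, I will compute $\E[Y\mid X]$ from \eqref{eq:final-transform}. We have $Y^\circ=\mathsf X V\vartheta_j+\varepsilon^\circ$, $X_1=Y^\circ+\xi$ and $Y=Y^\circ-\xi$. Projecting $\varepsilon^\circ-\xi$ onto $\varepsilon^\circ+\xi$ gives the coefficient $(\varpi_j-s_0)/(\varpi_j+s_0)=b_j$ on $X_1$, and $(1-b_j)V\vartheta_j$ on $X_{-1}$. The residual variance is $\sigma_j^2$. This verifies \eqref{eq:final-parameters} and shows $\mathbb Q_j(\beta_1=b_j)=1$.

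Next, $b_1-b_0$ is a Lipschitz, bi-Lipschitz function of $\varpi_1-\varpi_0=\tau\int\lambda\,(\nu_0-\nu_1)(d\lambda)$. Lemma~\ref{lem:lower-measure} should give that this difference is at least $c\,e^{-K\mathfrak q(0)}$. With $K=\lceil\log n/(2h)\rceil$ we get $e^{-K\mathfrak q(0)}\asymp n^{-\mathfrak q(0)/(2h)}$. By Lemma~\ref{lem:geometry}, $\mathfrak q(0)/h=\mathfrak u(0)$, and this equals $\alpha$ evaluated at the perturbed geometry, i.e.\ at $(\gamma^\circ,\kappa^{-1}+\upsilon)$. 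The explicit formula \eqref{eq:alpha-exponent} is smooth in these parameters, so the exponent is $\alpha(\kappa,p/n)+O(\upsilon+1/n)$. Since $n^{O(n^{-1/8})}=O(1)$, this gives $e^{-K\mathfrak q(0)}\asymp n^{-\alpha(\kappa,p/n)/2}$.

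\emph{Total variation (main obstacle).} The map \eqref{eq:final-transform}, with $\xi$ independent and common to both hypotheses, is a Markov kernel applied to $(\mathsf X,\mathsf Y)$. Hence $\TV(P_0,P_1)\le\TV(\widetilde P_0^{\mathcal C},\widetilde P_1^{\mathcal C})\le\TV(\widetilde P_0,\widetilde P_1)+2\pi(\mathcal C^c)$, and Lemma~\ref{lem:prior-concentration} makes the last term $o(1)$.

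For the unconditioned mixtures, the density of $\mathsf X$ under $\Sigma_-(V)$ contains $\det(I+V\mathsf DV^\top)^{n/2}$. This cancels the determinant in \eqref{eq:Vprior}, so $V$ becomes Gaussian-tilted by $\exp\bigl(-\tfrac12\tr(\mathsf X V\mathsf D V^\top\mathsf X^\top)\bigr)$ and by the quadratic likelihood of $\mathsf Y$. Integrating $V$ column by column then yields an explicit Gaussian law of $\mathsf Y$ given $\mathsf X$. Its conditional covariance differs between $j=0$ and $j=1$ only through $\sum_k\tau(w_{1k}-w_{0k})\times(\text{a resolvent of }\mathsf X\mathsf X^\top\text{ at a point determined by }\lambda_k)$ and through the scalar $\varpi_j$. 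In the large-$n$ limit this becomes a rational transform of $\nu_1-\nu_0$.

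I would then bound the expected conditional KL divergence by $n$ times the squared normalized difference of these covariances. Writing this with the sample-covariance spectrum of $\mathsf X$ turns it into a weighted spectral integral of the transform of $\nu_1-\nu_0$ over the Marchenko--Pastur support. Lemma~\ref{lem:lower-measure} is designed to make this quantity $\le c\tau^2$, and it should also control fluctuation terms via concentration of the empirical spectrum. Pinsker's inequality then gives $\TV\le1/3$ for small $\tau$ and large $n$.

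The delicate point is making the column-wise Gaussian integration exact. In particular, the choice of $t_k$ is what places the mode at $\mathsf Q=I_s$, and this is needed so that the conditional-covariance difference is genuinely the rational transform controlled by Lemma~\ref{lem:lower-measure}. I would first carry this out under an exact $\mathsf Q=I_s$ approximation, and then bound the $O(\upsilon^2)$ corrections on $\mathcal C$.
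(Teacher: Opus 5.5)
Your outline follows the paper's proof step for step. The shared steps are: the projection computation of $b_j$ and $\sigma_j^2$; the Schur-complement reduction of $\kappa^{-1}I_p\preceq\Sigma_j(V)\preceq I_p$ to the two endpoint-weighted integrals of Lemma~\ref{lem:lower-measure}; the separation $|b_1-b_0|\asymp\tau\bigl|\int\lambda\,(\nu_1-\nu_0)(d\lambda)\bigr|$ together with $\mathfrak q(0)/h=\alpha\bigl((\kappa^{-1}+\upsilon)^{-1},(p-1)/n\bigr)$; and the chain $\operatorname{TV}(P_0,P_1)\le\operatorname{TV}(\widetilde P_0,\widetilde P_1)+2\pi(\mathcal C^c)$.

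Two small corrections to the first part. First, the inclusion $[3s_0/2,2s_0]\subset(\kappa^{-1},1)$ fails for $\kappa\le5/3$. What is actually used is $\varpi_j+s_0=2s_0-O(\tau)$ and the fixed gaps $1-2s_0=2s_0-\kappa^{-1}=(1-\kappa^{-1})/2$. Second, $I-\Sigma_-$ is singular off $\operatorname{col}(V)$, so the criterion must be applied on $\operatorname{col}(V)$. There the upper condition is an identity rather than an approximation: for every $V$ of full column rank, $c^\top(I-\Sigma_-)^{+}c+\vartheta_j^\top V^\top\Sigma_-V\vartheta_j=\vartheta_j^\top\mathsf D^{-1}\vartheta_j=\tau\int\frac{\lambda}{1-\lambda}\,\nu_j(d\lambda)$. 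Only the lower condition needs $\mathsf Q\approx I_s$. It enters through $(1-\kappa^{-1})\mathsf Q^{-1}-\kappa^{-1}\mathsf D\succeq\frac12\diag\bigl((\lambda_k-\kappa^{-1})/\lambda_k\bigr)$, which uses $\lambda_k-\kappa^{-1}>\upsilon\gg\upsilon^2$.

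In the total-variation part, your last paragraph misplaces the difficulty. There is no $\mathsf Q=I_s$ approximation to make. Under the unconditioned prior $\pi$ the determinant cancels exactly, and the posterior columns are exactly $V_k\mid\mathsf X\sim N_d\bigl(0,(dt_kI_d+\mathsf D_{kk}\mathsf X^\top\mathsf X)^{-1}\bigr)$. Hence $\mathsf Y\mid\mathsf X\sim N(0,\mathcal V_j(\mathsf X))$ exactly, with eigenvalues $v_j(x)=\varpi_j+\tau\sum_kw_{jk}x/(t_k+\mathsf D_{kk}x)$ at the eigenvalues $x$ of $\mathsf X\mathsf X^\top/d$. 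The set $\mathcal C$ enters only through $2\pi(\mathcal C^c)$. In the likelihood, the value of $t_k$ matters only because it makes $v_1-v_0=\tau\int\mathcal K_x\,d(\nu_1-\nu_0)+O(\tau K/n)$; the mode at $I_s$ is what Lemma~\ref{lem:prior-concentration} uses.

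The genuinely delicate step is the one you leave vague. The bound \eqref{eq:lower-transform} carries the weight $\mathcal W$, which is of order $K^2$ at the edges $x_\pm$. At the prescribed degree $ne^{-2Kh}\asymp1$, so a supremum bound on $\mathcal W$ only gives $\E\,\mathrm{KL}\lesssim\tau^2K^2\to\infty$. You need two further inputs:
(i) every eigenvalue of $\mathsf X\mathsf X^\top/d$ lies in $[x_--\upsilon,x_++\upsilon]$ outside an event of probability $Ce^{-cn\upsilon^2}$, which follows from Gaussian singular-value bounds;
(ii) $\E[n^{-1}\tr\mathcal W(\mathsf X\mathsf X^\top/d)]\le C$, which follows from Lemma~\ref{lem:weighted-Wishart} plus a rank-$s$ interlacing comparison between the mixed design and a white Wishart matrix.
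Lemma~\ref{lem:lower-measure} is deterministic and supplies neither. In the paper this is exactly Lemma~\ref{lem:conditional-comparison}, which the proof of the present lemma simply cites. With that citation, or with (i) and (ii) supplied, your argument is complete.
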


\begin{proof}
We verify the regression transformation, bound the two covariance Schur complements, and then combine the conditional comparison with the prior-conditioning error.

\proofstep{The regression parameters.}
In \eqref{eq:final-transform}, subtracting $b_jX_1+(1-b_j)\mathsf XV\vartheta_j$ from $Y$ gives
\[
 (1-b_j)\varepsilon^\circ-(1+b_j)\xi.
\]
Its covariance with $X_1$ is $((1-b_j)\varpi_j-(1+b_j)s_0)I_n=0$, and its covariance with $\mathsf X$ is zero. Conditional on $V$, joint Gaussianity gives independence from the entire design. Its variance is $4\varpi_js_0/(\varpi_j+s_0)$, proving \eqref{eq:final-parameters}. On $\mathcal C$, the identity
\[
 V^{\top}\Sigma_-(V)V=(\mathsf Q^{-1}+\mathsf D)^{-1}
\]
yields
\[
 \vartheta_j^{\top}V^{\top}\Sigma_-(V)V\vartheta_j
 =\tau\int\lambda\,\nu_j(d\lambda)+O(\tau\upsilon^2),
 \qquad \norm{V\vartheta_j}_2\le C\sqrt\tau.
\]
Thus the upper-left entry of $\Sigma_j(V)$ is $2s_0+O(\tau\upsilon^2)$.

\proofstep{The upper and lower eigenvalue bounds.}
Write $V=O_V\mathsf Q^{1/2}$ with $O_V^{\top}O_V=I_s$, and put
$\mathsf S=(I_s+\mathsf Q^{1/2}\mathsf D\mathsf Q^{1/2})^{-1}$.
The matrix $\Sigma_-(V)$ is $\mathsf S$ on $\operatorname{col}(V)$ and the identity on its orthogonal complement. The off-diagonal block of $\Sigma_j(V)$ lies in $\operatorname{col}(V)$. The upper Schur-complement term is
\[
 \vartheta_j^{\top}\mathsf Q^{1/2}\mathsf S(I_s-\mathsf S)^{-1}
 \mathsf S\mathsf Q^{1/2}\vartheta_j
 \le\vartheta_j^{\top}\mathsf D^{-1}\vartheta_j
 =\tau\int\frac{\lambda}{1-\lambda}\,\nu_j(d\lambda)\le C\tau.
\]
Here $\mathsf S(I_s-\mathsf S)^{-1}\mathsf S=
(\mathsf Q^{1/2}\mathsf D\mathsf Q^{1/2})^{-1}-\mathsf S$, and all retained atoms satisfy $\lambda_k<1$.

For the lower bound define
$\mathsf C=(1-\kappa^{-1})\mathsf Q^{-1}-\kappa^{-1}\mathsf D$.
Its difference from $\diag((\lambda_k-\kappa^{-1})/\lambda_k)$ has norm $O(\upsilon^2)$. Since each $\lambda_k-\kappa^{-1}>\upsilon$, we have, for large $n$,
\[
 \mathsf C\succeq\tfrac12\diag\left(
 \frac{\lambda_k-\kappa^{-1}}{\lambda_k}\right)
 \succeq\tfrac12\diag\left(
 \frac{\lambda_k-\kappa^{-1}-\upsilon}{\lambda_k}\right)\succ0.
\]
This also implies $\mathsf S\succ\kappa^{-1}I_s$. Since $\mathsf S\preceq I_s$, diagonalizing $\mathsf S$ proves
$\mathsf S(\mathsf S-\kappa^{-1}I_s)^{-1}\mathsf S
\preceq\mathsf S(\mathsf S-\kappa^{-1}I_s)^{-1}$.
Consequently the lower Schur-complement term is at most
\[
 \vartheta_j^{\top}\mathsf C^{-1}\vartheta_j
 \le C\tau\int\frac{\lambda}{\lambda-\kappa^{-1}-\upsilon}\,
                 \nu_j(d\lambda)\le C\tau.
\]
The weighted bounds used here are from Lemma~\ref{lem:lower-measure}. The criterion itself follows by completing the square in a quadratic form: when $D\succ0$, the block matrix $\left(\begin{smallmatrix}a&b^{\top}\\b&D\end{smallmatrix}\right)$ is positive semidefinite exactly when $a\ge b^{\top}D^{-1}b$. Apply it on $\operatorname{col}(V)$; the orthogonal identity directions have zero coupling. Since $2s_0=(1+\kappa^{-1})/2$, both gaps $2s_0-\kappa^{-1}$ and $1-2s_0$ are fixed positive numbers. Taking $\tau$ sufficiently small and then $n$ large proves $\kappa^{-1}I_p\preceq\Sigma_j(V)\preceq I_p$.

The formulas also give $|b_j|\le C\tau$ and $|1-b_j|\le C$, so $\norm{\beta^{(j)}(V)}_2\le C\sqrt\tau$. Since $\varpi_j\in[s_0/2,s_0]$, we have $4s_0/3\le\sigma_j^2\le2s_0$, which is the stated interval.

\proofstep{Distance and separation.}
For any prior $\pi$ and sampling kernel, conditioning on $\mathcal C$ changes the induced data law by at most $\pi(\mathcal C^c)$; see Lemma~\ref{lem:testing-tools}. Thus, we have
\[
 \operatorname{TV}(\widetilde P_j^{\mathcal C},\widetilde P_j)
 \le\pi(\mathcal C^c).
\]
The common random transformation \eqref{eq:final-transform} does not increase total variation by the data-processing inequality. Lemmas~\ref{lem:conditional-comparison} and \ref{lem:prior-concentration} therefore give
\[
 \operatorname{TV}(P_0,P_1)
 \le\operatorname{TV}(\widetilde P_0,\widetilde P_1)
       +2\pi(\mathcal C^c)\le\frac13
\]
for all sufficiently large $n$. The first coefficient is $b_j$, independently of $V$, and
\[
 |b_1-b_0|=
 \frac{2s_0\tau\left|\int\lambda(\nu_1-\nu_0)(d\lambda)\right|}
 {(2s_0-\tau\int\lambda\,d\nu_0)(2s_0-\tau\int\lambda\,d\nu_1)}
 \ge c e^{-K\mathfrak q(0)}.
\]
Finally, the explicit potential gives
\[
 \frac{\mathfrak q(0)}h
 =\alpha\left((\kappa^{-1}+\upsilon)^{-1},\frac{p-1}n\right)
 =\alpha(\kappa,p/n)+O(\upsilon+n^{-1}).
\]
The derivative of this formula is bounded on the fixed compact parameter set. Since $(\upsilon+n^{-1})\log n\to0$, the degree \eqref{eq:lower-degree} gives $e^{-K\mathfrak q(0)}\asymp n^{-\alpha(\kappa,p/n)/2}$.
\end{proof}

\subsection{Spectral measures with the required transform bound}
For $x>0$, define
\begin{equation}\label{eq:lower-kernel}
 \mathcal K_x(\lambda)=
 \frac{\lambda^2(\gamma^\circ x-\gamma^\circ+1-\lambda)}
 {\lambda(\gamma^\circ-1+\lambda)+\gamma^\circ x(1-\lambda)}.
\end{equation}
Let $x_-=(1-(\gamma^\circ)^{-1/2})^2$ and $x_+=(1+(\gamma^\circ)^{-1/2})^2$, and define the nonnegative weight
\begin{equation}\label{eq:edge-weight}
 \mathcal W(x)=\frac1{\sqrt{(x-x_-)^2+K^{-4}}}
              +\frac1{\sqrt{(x-x_+)^2+K^{-4}}}.
\end{equation}
The interval $[x_-,x_+]$ supports the limiting eigenvalue distribution of $\mathsf Z\mathsf Z^{\top}/d$ for an $n\times d$ standard Gaussian matrix $\mathsf Z$.

\begin{lemma}[Atomic measures with endpoint control]\label{lem:lower-measure}
For $K\asymp\log n$, there are positive finite atomic measures $\nu_0,\nu_1$ on $(\kappa^{-1}+\upsilon,1)$ such that
\begin{align*}
 &|\operatorname{supp}(\nu_0)\cup\operatorname{supp}(\nu_1)|\le CK,
 \qquad \nu_0(\R)+\nu_1(\R)\le C,\notag\\
 &\int\left(\frac{\lambda}{1-\lambda}+
 \frac{\lambda}{\lambda-\kappa^{-1}-\upsilon}\right)\nu_j(d\lambda)\le C,
 \qquad j=0,1,\notag\\
 &\left|\int\lambda(\nu_1-\nu_0)(d\lambda)\right|
       \ge c e^{-K\mathfrak q(0)}.
\end{align*}
For every $x\in[x_--\upsilon,x_++\upsilon]$, they also satisfy
\begin{equation}\label{eq:lower-transform}
 \left|\int\mathcal K_x(\lambda)(\nu_1-\nu_0)(d\lambda)\right|^2
 \le C e^{-2Kh}\mathcal W(x).
\end{equation}
\end{lemma}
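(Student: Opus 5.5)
The plan is to realize $\nu_1-\nu_0$ as a duality certificate for a weighted polynomial/rational approximation problem, built from the reflected elliptic filter of the same geometry. The setting is the slit disk $\{|z-1|<\sqrt{\gamma^\circ}\}\setminus[\kappa^{-1}+\upsilon,1]$ with potential $\mathfrak q$.

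\textbf{Step 1: transfer the kernel to the disk.} For $x$ near $[x_-,x_+]$, $\mathcal K_x$ is a rational function of $\lambda$ with a single pole $\lambda_x$. Solving $\lambda(\gamma^\circ-1+\lambda)+\gamma^\circ x(1-\lambda)=0$, one checks via the Marchenko--Pastur parametrization that $x\in[x_-,x_+]$ corresponds exactly to $\lambda_x$ on the circle $|\lambda-1|=\sqrt{\gamma^\circ}$. This mirrors Proposition~\ref{prop:calibration}, where $|\mu-L|=\sqrt\gamma L$ is the image of the bulk edge. For $x$ within $\upsilon$ of the interval, $\lambda_x$ lies within $O(\sqrt\upsilon)$ of that circle. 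The endpoints $x_\pm$ correspond to the real points $1\pm\sqrt{\gamma^\circ}$. Writing $\mathcal K_x(\lambda)=P_x(\lambda)+c_x/(\lambda-\lambda_x)$, the requirement \eqref{eq:lower-transform} therefore becomes smallness of the moments $\int\lambda^a\,d\eta$ for $a\le 2$ and of the Cauchy transform $\int(\lambda-\lambda_x)^{-1}d\eta$ on that circle, where $\eta=\nu_1-\nu_0$. The $\mathcal W$ weight absorbs the degeneration of $c_x$ (a square-root Jacobian) near the two real points.

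\textbf{Step 2: build $\eta$ by duality.} I would look for $\eta$ annihilating a large analytic class, namely all functions with $|f|\le e^{Kh}$ on the circle and $|f|\le 1$ on the slit, while having $\int\lambda\,d\eta\ge e^{-K\mathfrak q(0)}$. This is the dual of the two-constants extremal problem in Theorem~\ref{thm:two}. Concretely, take the equilibrium-type density obtained from the jump of $\partial_n$ of $\cosh(K\mathfrak q)$ across the slit. By the residue argument used in Theorem~\ref{thm:generic}, the signed measure
\[
\eta(d\lambda)\propto \frac{1}{\lambda}\,\frac{d\lambda}{\cosh(K\mathfrak q(0))}\times(\text{jump density of the Cauchy transform of }1/\cosh(K\mathfrak q)\text{ on }[\kappa^{-1}+\upsilon,1])
\]
has Cauchy transform equal to $1/(\lambda\cosh(K\mathfrak q))$ up to a polynomial correction outside the slit. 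This gives size $e^{-Kh}$ on the circle, since $\operatorname{Re}\mathfrak q=h$ there, and $\int\lambda\,d\eta\asymp 1/\cosh(K\mathfrak q(0))\asymp e^{-K\mathfrak q(0)}$ by evaluating at $0$. To enforce the few moment conditions, I would multiply by a fixed low-degree polynomial factor.

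\textbf{Step 3: discretize and split.} Replace the continuous density by a Gauss-type quadrature with $O(K)$ nodes in the interior of the slit, chosen in the conformal coordinate of $\mathfrak q$ so that the quadrature error on the circle is at most $e^{-cK}\cdot e^{-Kh}$. This is exact for the analytic class up to that level, and produces the bound $|\mathrm{supp}|\le CK$. Then set $\nu_1=\eta_+ + \nu_*$ and $\nu_0=\eta_-+\nu_*$, where $\nu_*$ is a fixed positive atomic measure. The total variation of $\eta$ is $O(1)$ because $1/\cosh(K\mathfrak q)$ is bounded on the slit. Placing the nodes at distance $\gtrsim K^{-2}$ from the endpoints, where the equilibrium density has square-root behavior, keeps the weighted integrals of $\lambda/(1-\lambda)$ and $\lambda/(\lambda-\kappa^{-1}-\upsilon)$ bounded, since the edge mass is $O(K^{-1})$ per node.

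\textbf{Main obstacle.} The hard part is the uniform bound \eqref{eq:lower-transform} near the real points $x_\pm$, where $\lambda_x$ approaches $1\pm\sqrt{\gamma^\circ}$ and the slit endpoint geometry interacts with $c_x$. There I would bound $|\int(\lambda-\lambda_x)^{-1}d\eta|$ by the maximum principle applied to the analytic extension outside the disk. The derivative blow-up in $x$ is then handled by the $\sqrt{(x-x_\pm)^2+K^{-4}}$ weight, which is exactly the scale of the Jacobian $d\lambda_x/dx\sim(x-x_\pm)^{-1/2}$ regularized at the $K^{-2}$ resolution.
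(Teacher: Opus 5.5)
There are genuine gaps, although your overall plan matches the paper's: take the slit jump of an analytic function built from $\mathfrak q$, read off the transform bounds from its Cauchy transform on the critical circle, then discretize. The first gap is in Step~2. The reflected function $\cosh(K\mathfrak q)$ has the same trace on both sides of the open slit; this is exactly why $g_K$ extends holomorphically across the slit in Theorem~\ref{thm:two}. Consequently the jump of $\cosh(K\mathfrak q)$, of its normal derivative, or of $1/\cosh(K\mathfrak q)$ vanishes as a density. The function $1/\cosh(K\mathfrak q)$ has only isolated real poles, where $\cos(K\operatorname{Im}\mathfrak q)=0$, so the continuous ``equilibrium density'' you want to discretize does not exist.

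The paper instead uses the \emph{unreflected} function $f=e^{-K\mathfrak q}$, whose traces $e^{\mp\ii K\theta}$ on the slit genuinely differ. Then $\eta=(f_+-f_-)\,d\lambda/(2\pi\ii)=-\pi^{-1}\sin(K\theta)\,d\lambda$ is a bounded real density with $\norm\eta_{\TV}\le1/\pi$ and Cauchy transform $\mathcal J=f-\mathcal E$. Your $1/\cosh$ idea can be rescued differently: the negated residues at its $O(K)$ real poles form an atomic measure whose Cauchy transform is $1/\cosh(K\mathfrak q)$ minus an outer-circle integral, which would make Step~3 unnecessary.

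Your normalization is also off by one power of $\lambda$. If $\eta=\lambda^{-1}\mu$, then $\int\lambda\,d\eta=\mu(\R)=-\frac1{2\pi\ii}\oint\mathcal J_\mu(z)\,dz$ over a contour around the slit, where $\mathcal J_\mu$ is the Cauchy transform of $\mu$. That contour can be pushed to the outer circle, so this quantity is $O(e^{-Kh})$, far below $e^{-K\mathfrak q(0)}$. Evaluation at $0$ produces $\int\lambda^{-1}d\mu$, so the factor must be $\lambda^{-2}$; this also cancels the $\lambda^2$ in the numerator of $\mathcal K_x$.

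Second, the weighted bounds fail as proposed. Near an endpoint, $\operatorname{Im}\mathfrak q$ varies like the square root of the distance $s$. Hence both the jump density and the residue atoms carry mass comparable to $ds$ at distance $s\gg K^{-2}$, giving $\int\frac{\lambda}{1-\lambda}\,|\eta|(d\lambda)\gtrsim\int_{K^{-2}}^1 s^{-1}ds\asymp\log K$, and the same happens at $\kappa^{-1}+\upsilon$. No node placement cures a logarithmic divergence spread over $[K^{-2},1]$; your own figures, mass $K^{-1}$ at distance $K^{-2}$, would even give a contribution of order $K$.

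The missing device is the endpoint factor $(\lambda-\kappa^{-1}-\upsilon)(1-\lambda)/\lambda^2$, unless your ``fixed low-degree polynomial factor'' was meant to vanish at both endpoints. After multiplying by it, $\lambda/(1-\lambda)$ and $\lambda/(\lambda-\kappa^{-1}-\upsilon)$ become bounded, so the weighted integrals are controlled by total mass. The target becomes $-\int\lambda\,d\eta+(1+\kappa^{-1}+\upsilon)\int d\eta-(\kappa^{-1}+\upsilon)\mathcal J(0)$. The two extra moments are $O(e^{-K(h+c_0)})$ after reflecting $\mathfrak q$ across $|z-1|=\sqrt{\gamma^\circ}$ and pushing the contour to a larger circle.

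Third, $\mathcal K_x$ has two conjugate poles $z_\pm=1+\sqrt{\gamma^\circ}e^{\pm\ii t}$ on the critical circle, not one. The weight $\mathcal W$ is needed because these poles merge at $x_\pm$. What must be bounded is the divided difference $(\mathcal J(z_+)-\mathcal J(z_-))/(z_+-z_-)$, which requires $|\mathcal J'|\lesssim Ke^{-Kh}$ within $O(K^{-2})$ of the circle; the maximum principle for $\mathcal J$ alone does not give this.

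The discretization is repairable, either by Gauss quadrature with respect to $\eta^+$ and $\eta^-$ separately, or by the paper's Carath\'eodory matching of $1,\lambda,\dots,\lambda^{C_1K},\lambda^{-1}$ combined with Chebyshev approximation of the integrands. As written, however, the error bound $e^{-cK}e^{-Kh}$ for quadrature of a signed oscillating density is only asserted.
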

\begin{remark}
The construction is a Cauchy-transform version of the moment-separation step in \citet[Appendix~D.2.1, Proposition~16]{KongValiant2018}. The difference is the endpoint factor $(\lambda-\kappa^{-1}-\upsilon)(1-\lambda)/\lambda^2$ and the weighted bound \eqref{eq:lower-transform}. The latter is needed when the two poles of the rational transform coincide at a spectral edge.
\end{remark}
\begin{proof}
We form a signed measure from the two boundary traces of an analytic function, multiply it by an endpoint factor, and then replace its positive parts by finitely supported measures while preserving the target integral.

\proofstep{Boundary traces and the target integral.}
Set $f(z)=e^{-K\mathfrak q(z)}$. Its two slit traces define the real signed measure
\[
 \eta(d\lambda)=\frac{f_+(\lambda)-f_-(\lambda)}{2\pi\ii}\,d\lambda,
 \qquad
 \mathcal J(z)=\int\frac{\eta(d\lambda)}{\lambda-z}.
\]
We have $\norm\eta_{\TV}\le(1-\kappa^{-1}-\upsilon)/\pi$, because both traces of $f$ have modulus one. Cauchy's integral formula on the slit domain, with the upper side oriented towards $1$, gives
\[
 \mathcal J(z)=f(z)-\mathcal E(z),\qquad
 \mathcal E(z)=\frac1{2\pi\ii}\oint_{|\xi-1|=\sqrt{\gamma^\circ}}
                       \frac{f(\xi)}{\xi-z}\,d\xi.
\]
This is the finitely connected form of the residue theorem; see \citet[Chapter~4, Section~5.1, Theorem~17]{Ahlfors1979}. Reflection across the outer circle extends $\mathfrak q$ to a fixed exterior collar. Its outward normal derivative is positive there: the annulus map is conformal at the analytic boundary, and its radial logarithm increases outwards. Continuity of the explicit elliptic formula on the compact parameter family gives a positive uniform lower bound for that derivative. Hence a slightly larger fixed circle $\mathcal C_+$ has $\operatorname{Re}\mathfrak q\ge h+c_0$, with $c_0>0$ independent of $n$.

Deform the contour defining $\mathcal E$ to $\mathcal C_+$. If a compact neighborhood of the original disk is at distance at least $d_0>0$ from $\mathcal C_+$, then Cauchy's integral formula and its derivative give the explicit bounds
\[
 |\mathcal E^{(j)}(z)|
 \le\frac{j!\,\operatorname{length}(\mathcal C_+)}{2\pi d_0^{j+1}}
        e^{-K(h+c_0)},\qquad j=0,1.
\]
The same contour deformation gives
$|\int\eta|+|\int\lambda\,\eta(d\lambda)|\le Ce^{-K(h+c_0)}$.

Multiply $\eta$ by
\[
 \frac{(\lambda-\kappa^{-1}-\upsilon)(1-\lambda)}{\lambda^2}.
\]
The target integral of the resulting signed measure equals
\[
 -\int\lambda\,\eta(d\lambda)
 +(1+\kappa^{-1}+\upsilon)\int\eta
 -(\kappa^{-1}+\upsilon)\mathcal J(0)
 =-(\kappa^{-1}+\upsilon)e^{-K\mathfrak q(0)}
   +O(e^{-K(h+c_0)}).
\]
Since $0<\mathfrak q(0)<h$ uniformly, this proves the required separation. Multiplication of the endpoint factor by $\lambda/(1-\lambda)$ leaves $(\lambda-\kappa^{-1}-\upsilon)/\lambda\le1$. Multiplication by $\lambda/(\lambda-\kappa^{-1}-\upsilon)$ leaves $(1-\lambda)/\lambda\le C$. Therefore both positive parts have the stated mass and weighted bounds.

\proofstep{The rational transform near the spectral edges.}
For $x=1+(\gamma^\circ)^{-1}+2(\gamma^\circ)^{-1/2}\cos t$, the denominator of \eqref{eq:lower-kernel} factors as
\[
 (\lambda-z_+)(\lambda-z_-),\qquad
 z_\pm=1+\sqrt{\gamma^\circ}e^{\pm\ii t}.
\]
Multiplication by the endpoint factor produces a cubic numerator and a quadratic denominator. Its polynomial part is $\lambda-(1+\kappa^{-1}+\upsilon)$; its remainder is
\[
 (\kappa^{-1}+\upsilon-\gamma^\circ x)\lambda
 +(1+\kappa^{-1}+\upsilon)\gamma^\circ x
 -(\kappa^{-1}+\upsilon)(\gamma^\circ x-\gamma^\circ+1).
\]
Thus the integral is a bounded linear combination of $\int\eta$, $\int\lambda\,d\eta$, and
\[
 \frac{\mathcal J(z_+)-\mathcal J(z_-)}{z_+-z_-},\qquad
 \frac{z_+\mathcal J(z_+)-z_-\mathcal J(z_-)}{z_+-z_-}.
\]
At coincident roots these expressions are the derivatives of $\mathcal J$ and $z\mathcal J(z)$.

On the critical circle, $|f(z_\pm)|=e^{-Kh}$. If $|\sin t|\ge K^{-1}$, the first divided difference has absolute value at most $Ce^{-Kh}/|\sin t|$. If $|\sin t|<K^{-1}$, the segment between the two roots is at distance $O(K^{-2})$ from the circle. On that segment, $\operatorname{Re}\mathfrak q\ge h-CK^{-2}$ and $|\mathfrak q'|\le C$, so $|f'|\le CKe^{-Kh}$. The bounds on $\mathcal E$ give the same estimates for $\mathcal J$. The second divided difference is bounded by the first times a bounded root plus a bounded value of $\mathcal J$. Hence the transform is bounded by $Ce^{-Kh}\min(K,|\sin t|^{-1})$.

For $x$ at distance at most $\upsilon$ beyond an endpoint, the roots are within $C\sqrt\upsilon$ of their coincident value. The derivative bound is then $CKe^{-Kh+CK\sqrt\upsilon}\le C'Ke^{-Kh}$, because $K\sqrt\upsilon\to0$. On the support interval we have
$(x-x_-)(x_+-x)=4(\gamma^\circ)^{-1}\sin^2t$.
Also $x_+-x_-$ is bounded above and below, and $\upsilon=o(K^{-2})$. These facts imply \eqref{eq:lower-transform} for the continuous signed measure, with the weight \eqref{eq:edge-weight}.

\proofstep{Replacement by atomic measures.}
For each nonzero positive part $\eta^+$ or $\eta^-$, divide by its mass and integrate the continuous vector
\[
 (1,\lambda,\ldots,\lambda^M,\lambda^{-1}),\qquad
 M=\lceil C_1K\rceil.
\]
The integral lies in the convex hull of the image of the compact interval. Any convex combination with more than $M+3$ points has affinely dependent vectors. Subtract a suitable multiple of an affine dependence from its nonnegative weights until one weight is zero, and repeat. Limits of such combinations are also in the same compact convex hull. We obtain a positive atomic measure with at most $M+3$ points and exactly the displayed integrals. This is the finite-dimensional positive-cubature argument underlying \citet[Theorem~2]{BayerTeichmann2006}; the argument just given also allows the additional continuous function $\lambda^{-1}$.

Multiply the atomic weights by the endpoint factor. Endpoint atoms then have zero weight and are deleted. The target is unchanged, since its formula uses only the matched integrals of $1,\lambda,\lambda^{-1}$. The weighted bounds remain valid by the pointwise inequalities in Step~1.

The rational integrands in Step~2 are holomorphic and uniformly bounded in a fixed complex neighborhood of $[\kappa^{-1}+\upsilon,1]$: their poles lie near the critical circle, at positive distance from that interval. Lemma~\ref{lem:analytic-approximation} supplies polynomial approximations of degree $M$ with error $C\rho_0^{-M}$ for some fixed $\rho_0>1$. Choose $C_1$ so that this error is at most $e^{-K(h+c_0)}$. Moment matching then changes the transform by at most a constant times this quantity. Since $\mathcal W$ has a positive lower bound on the enlarged interval, it is absorbed into \eqref{eq:lower-transform}. This gives the asserted atomic $\nu_0,\nu_1$.
\end{proof}

\subsection{Concentration of the mixing matrix}
\begin{lemma}[Concentration of the full Gram matrix]\label{lem:prior-concentration}
For the density \eqref{eq:Vprior}, we have
\begin{equation}\label{eq:prior-event}
 \pi(\mathcal C^c)
 \le\exp(-c\sqrt n+C(\log n)^3)=o(n^{-D_0})
\end{equation}
for every fixed $D_0>0$. On $\mathcal C$ we have $\norm V_\op\le2$ and
$\lambda_{\min}(\Sigma_-(V))\ge\kappa^{-1}+\upsilon-C\upsilon^2$.
\end{lemma}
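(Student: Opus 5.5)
The plan is to reduce the statement to a Laplace-type estimate for the law of the $s\times s$ Gram matrix $\mathsf Q=V^{\top}V$. The main tool is a global convexity bound for the exponent of that law.

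\textbf{Step 1: the law of $\mathsf Q$.} By Sylvester's determinant identity, $\det(I_d+V\mathsf DV^{\top})=\det(I_s+\mathsf Q\mathsf D)$. Hence $\pi$ depends on $V$ only through $\mathsf Q$. Because $d>s$, the pushforward of Lebesgue measure under $V\mapsto V^{\top}V$ has density proportional to $\det(\mathsf Q)^{(d-s-1)/2}$ on the positive definite cone. So $\mathsf Q$ has density proportional to $\exp(-d\,\Phi(\mathsf Q))$, where
\[
 \Phi(\mathsf Q)=-\frac{d-s-1}{2d}\log\det\mathsf Q+\frac n{2d}\log\det(I_s+\mathsf Q\mathsf D)+\frac12\tr(\diag(t_1,\ldots,t_s)\mathsf Q).
\]
The gradient of $\Phi$ at $\mathsf Q=I_s$ is diagonal, with $k$th entry
\[
 -\frac{d-s-1}{2d}+\frac{n(1-\lambda_k)}{2d}+\frac{t_k}2 .
\]
This vanishes exactly by the definition of $t_k$, so $I_s$ is a critical point.

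\textbf{Step 2: global strong convexity.} For a symmetric direction $H$, the second derivative of $\Phi$ is
\[
 \frac{d-s-1}{2d}\tr\bigl((\mathsf Q^{-1}H)^2\bigr)-\frac n{2d}\tr\bigl(((\mathsf D^{-1}+\mathsf Q)^{-1}H)^2\bigr).
\]
The operator inequality $(\mathsf D^{-1}+\mathsf Q)^{-1}\preceq\mathsf Q^{-1}$, together with operator monotonicity of the quadratic forms $H\mapsto\tr((AH)^2)$ for $A\succ0$ (after symmetrizing via $A^{1/2}HA^{1/2}$), bounds the second term. This gives the lower bound
\[
 \frac{d-s-1-n}{2d}\tr\bigl((\mathsf Q^{-1}H)^2\bigr).
\]
Since $\gamma^\circ$ is bounded away from one and $s=O(\log n)$, the prefactor is at least $c>0$. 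Thus $\Phi$ is convex on the whole cone. It is $c$-strongly convex in Frobenius norm on $\{\norm{\mathsf Q-I_s}_\op\le1/2\}$, and its Hessian there is at most $C$.

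I expect this step to be the main obstacle. The comparison of the concave $\log\det(I+\mathsf Q\mathsf D)$ term with the convex $-\log\det\mathsf Q$ term must hold globally, not only near $I_s$, so that the tails are controlled.

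\textbf{Step 3: Laplace bound.} Let $r=\upsilon^2=n^{-1/4}$. For $\norm{\mathsf Q-I_s}_\op= r$, strong convexity and $\nabla\Phi(I_s)=0$ give $\Phi(\mathsf Q)-\Phi(I_s)\ge c r^2$. Convexity along rays from $I_s$ extends this to $\Phi(\mathsf Q)-\Phi(I_s)\ge c\,r\norm{\mathsf Q-I_s}_\op$ whenever $\norm{\mathsf Q-I_s}_\op\ge r$. Integrate $e^{-d(\Phi-\Phi(I_s))}$ over $\mathcal C^c$, an $s(s+1)/2$-dimensional region, in polar coordinates. Bounding the radial integral crudely yields
\[
 e^{-c d r^2}\,(C/(dr))^{s^2}\le \exp\bigl(-c\sqrt n+C s^2\log n\bigr).
\]
For the normalizing constant, the Hessian upper bound gives a matching lower bound on the ball of radius $n^{-1}$ around $I_s$, which costs at most $\exp(-Cs^2\log n)$. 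Since $s\le CK\lesssim\log n$, we have $s^2\log n\lesssim(\log n)^3$. This gives \eqref{eq:prior-event}. Positivity of $\pi(\mathcal C)$ follows from the same lower bound.

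\textbf{Step 4: the deterministic consequences on $\mathcal C$.} Here $\norm V_\op^2=\norm{\mathsf Q}_\op\le1+\upsilon^2\le4$. Also $\lambda_{\min}(\Sigma_-(V))=(1+\lambda_{\max}(\mathsf Q^{1/2}\mathsf D\mathsf Q^{1/2}))^{-1}$. The largest eigenvalue of $\mathsf Q^{1/2}\mathsf D\mathsf Q^{1/2}$ is at most $(1+\upsilon^2)\max_k(1-\lambda_k)/\lambda_k$. Since every $\lambda_k>\kappa^{-1}+\upsilon$, this maximum is less than $(1-\kappa^{-1}-\upsilon)/(\kappa^{-1}+\upsilon)$. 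A first-order expansion then gives $\lambda_{\min}(\Sigma_-(V))\ge\kappa^{-1}+\upsilon-C\upsilon^2$.
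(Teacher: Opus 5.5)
Your proposal is correct and follows the paper's proof essentially step for step. It uses the same $\det(\mathsf Q)^{(d-s-1)/2}$ density for $\mathsf Q=V^{\top}V$, the same choice of $t_k$ making $I_s$ critical, the same Hessian comparison through $(\mathsf D^{-1}+\mathsf Q)^{-1}\preceq\mathsf Q^{-1}$ (the paper's $\mathsf B\preceq\mathsf Q^{-1}$ with a Kronecker argument), and the same normalizer-versus-tail Laplace comparison at an $e^{Cs^2\log n}$ volume cost. The only variation is in the tail integral: the paper integrates the Hessian bound to $\phi(\mathsf Q)-\phi(I_s)\le -cd\tr(\mathsf Q-I_s-\log\mathsf Q)$ and evaluates an explicit matrix-variate gamma integral, whereas your linear-growth bound from convexity along rays, integrated in polar coordinates, reaches the same $\exp(-c\sqrt n+C(\log n)^3)$ more elementarily.
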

\begin{proof}
We compute the density of $\mathsf Q=V^{\top}V$, prove a quantitative decrease from its mode, and compare its tail integral with an explicit lower bound for the normalizing integral.

\proofstep{Density and curvature.}
Thin QR coordinates give the density of $\mathsf Q$ with respect to $\prod_{a\le b}d\mathsf Q_{ab}$, up to a normalizing constant, as
\[
 (\det\mathsf Q)^{(d-s-1)/2}
 \det(I_s+\mathsf D^{1/2}\mathsf Q\mathsf D^{1/2})^{-n/2}
 e^{-d\tr(\diag(t_k)\mathsf Q)/2}.
\]
Indeed, for the upper-triangular QR factor $T$, the factors in $dV$ and $d\mathsf Q$ are proportional to $\prod_kT_{kk}^{d-k}$ and $2^s\prod_kT_{kk}^{s+1-k}$, respectively. Their quotient gives the displayed determinant power.
Let $\phi$ be the logarithm of this density. The choice of $t_k$ gives $\nabla\phi(I_s)=0$. For a symmetric direction $E$, put
$\mathsf B=\mathsf D^{1/2}(I_s+\mathsf D^{1/2}\mathsf Q\mathsf D^{1/2})^{-1}\mathsf D^{1/2}$.
Then $0\preceq\mathsf B\preceq\mathsf Q^{-1}$, and
\begin{align*}
 D^2\phi(\mathsf Q)[E,E]
 &=-\frac{d-s-1}{2}\tr(\mathsf Q^{-1}E\mathsf Q^{-1}E)
    +\frac n2\tr(\mathsf B E\mathsf B E)\\
 &\le-\frac{d-s-1-n}{2}\tr(\mathsf Q^{-1}E\mathsf Q^{-1}E).
\end{align*}
The last inequality follows from
$\mathsf B\otimes\mathsf B\preceq\mathsf Q^{-1}\otimes\mathsf Q^{-1}$ and the vectorization formula for the quadratic form; no sign condition on $E$ is used. Since $d-s-1-n\ge cd$, integrating along the segment from $I_s$ to $\mathsf Q$ gives
\begin{equation}\label{eq:Gram-concavity}
 \phi(\mathsf Q)-\phi(I_s)
 \le-cd\tr(\mathsf Q-I_s-\log\mathsf Q).
\end{equation}

\proofstep{Normalizer and tail.}
On $\norm{\mathsf Q-I_s}_F\le d^{-1/2}$, the absolute Hessian is at most $Cd$, so the integral of the density is at least $e^{\phi(I_s)-C}$ times the volume of this ball in the $s(s+1)/2$ independent matrix entries. Its logarithm is at least $\phi(I_s)-Cs^2\log d$. If $0<t<1/2$ and $\norm{\mathsf Q-I_s}_\op>t$, at least one eigenvalue yields $\tr(\mathsf Q-I_s-\log\mathsf Q)\ge ct^2$. Split half of the exponential in \eqref{eq:Gram-concavity} to extract $e^{-cdt^2}$. Lemma~\ref{lem:matrix-volume} bounds the integral of the remaining exponential by $e^{Cs^2\log d}$. Thus, we have
\[
 \pi(\norm{\mathsf Q-I_s}_\op>t)
 \le e^{-cdt^2+Cs^2\log d}.
\]
Taking $t=\upsilon^2$ proves \eqref{eq:prior-event}. Finally, the nonzero eigenvalues of $V\mathsf DV^{\top}$ equal those of $\mathsf D^{1/2}\mathsf Q\mathsf D^{1/2}$. The latter differ from the entries of $\mathsf D$ by $O(\upsilon^2)$. The function $x\mapsto(1+x)^{-1}$ is Lipschitz on the relevant positive interval, which gives the eigenvalue assertion for $\Sigma_-(V)$.
\end{proof}

\subsection{Conditional likelihood and total variation}
\begin{lemma}[Conditional Gaussian comparison]\label{lem:conditional-comparison}
For the unconditioned data laws $\widetilde P_0,\widetilde P_1$, the design marginals coincide. Given $\mathsf X$, the columns $V_k$ are independent with
\begin{equation}\label{eq:posterior-columns}
 V_k\mid\mathsf X\sim
 N_d\left(0,(dt_kI_d+\mathsf D_{kk}\mathsf X^{\top}\mathsf X)^{-1}\right).
\end{equation}
Under hypothesis $j$, the conditional law of $\mathsf Y$ is $N(0,\mathcal V_j(\mathsf X))$, where
\[
 \mathcal V_j(\mathsf X)=\varpi_j I_n+
 \tau\sum_{k=1}^s w_{jk}\mathsf X
 (dt_kI_d+\mathsf D_{kk}\mathsf X^{\top}\mathsf X)^{-1}\mathsf X^{\top}.
\]
Moreover,
\begin{equation}\label{eq:TVaux}
 \operatorname{TV}(\widetilde P_0,\widetilde P_1)
 \le C\tau(ne^{-2Kh}+K^2/n)^{1/2}+Ce^{-cn\upsilon^2}.
\end{equation}
With \eqref{eq:lower-degree} and sufficiently small fixed $\tau$, this distance is at most $1/4$ for all sufficiently large $n$.
\end{lemma}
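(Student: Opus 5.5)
The plan is to compute everything explicitly from the joint Gaussian density, using the determinant cancellation built into \eqref{eq:Vprior}. Under hypothesis $j$, conditionally on $V$ the rows of $\mathsf X$ have density proportional to $\det(I_d+V\mathsf DV^{\top})^{n/2}\exp(-\tfrac12\tr(\mathsf X(I_d+V\mathsf DV^{\top})\mathsf X^{\top}))$. Multiplying by $\pi(V)$ cancels the determinant, and the joint density of $(\mathsf X,V)$ becomes
\[
 Z_\pi^{-1}(2\pi)^{-nd/2}e^{-\norm{\mathsf X}_F^2/2}\prod_{k=1}^s\exp\left(-\tfrac12V_k^{\top}(dt_kI_d+\mathsf D_{kk}\mathsf X^{\top}\mathsf X)V_k\right).
\]
This expression involves neither $\vartheta_j$ nor $\varpi_j$. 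Integrating out $V$ shows that the design marginals of $\widetilde P_0,\widetilde P_1$ coincide, with density proportional to $e^{-\norm{\mathsf X}_F^2/2}\prod_k\det(dt_kI_d+\mathsf D_{kk}\mathsf X^{\top}\mathsf X)^{-1/2}$. Reading off the $V$-dependence gives \eqref{eq:posterior-columns}. Since $\mathsf Y=\mathsf X V\vartheta_j+\varepsilon^\circ$ with $V\vartheta_j=\sum_k\sqrt{\tau w_{jk}}V_k$, and the columns are conditionally independent centered Gaussians, $\mathsf Y\mid\mathsf X$ is centered Gaussian with covariance $\mathcal V_j(\mathsf X)$.

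For \eqref{eq:TVaux}, I will condition on $\mathsf X$ and apply Pinsker's inequality, which gives $\TV(\widetilde P_0,\widetilde P_1)\le\E_{\mathsf X}\sqrt{D_{\mathrm{KL}}/2}$. Both covariances satisfy $\mathcal V_j\succeq\varpi_jI_n\succeq(s_0/2)I_n$, so the Gaussian divergence is at most $C\norm{\mathcal V_0-\mathcal V_1}_F^2$. This holds after discarding the event where the difference is not small, which is handled below. The matrices $\mathsf X R_k\mathsf X^{\top}$, with $R_k=(dt_kI_d+\mathsf D_{kk}\mathsf X^{\top}\mathsf X)^{-1}$, are simultaneously diagonal in the eigenbasis of $\mathsf X\mathsf X^{\top}$. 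Their eigenvalue at a sample eigenvalue $dx_i$ is $x_i/(t_k+\mathsf D_{kk}x_i)$. Writing $\varpi_0-\varpi_1=\tau\int\lambda(\nu_1-\nu_0)(d\lambda)$, the eigenvalue of $\mathcal V_0-\mathcal V_1$ becomes
\[
 \tau\int\left[\frac{x_i}{t(\lambda)+\mathsf D(\lambda)x_i}-\lambda\right](\nu_0-\nu_1)(d\lambda).
\]
Substituting $t(\lambda)=1-(1-\lambda)/\gamma^\circ+O(K/n)$ and $\mathsf D(\lambda)=(1-\lambda)/\lambda$, I expect the bracket to reduce algebraically to $-\mathcal K_{x_i}(\lambda)/\lambda\cdot(\text{const})$, i.e.\ to \eqref{eq:lower-kernel} up to a harmless rescaling. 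This algebra has to be checked. The $O(s/n)$ error in $t_k$ contributes $O(\tau/n)$ per eigenvalue, because $\nu_j$ has bounded mass and the denominators are bounded below. The lower bound $t_k\ge c$ and $x_i\ge0$ give this.

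Lemma~\ref{lem:lower-measure} then gives, for $x_i\in[x_--\upsilon,x_++\upsilon]$,
\[
 \norm{\mathcal V_0-\mathcal V_1}_F^2\le C\tau^2\Bigl(e^{-2Kh}\sum_i\mathcal W(x_i)+\tfrac1n\Bigr).
\]
The main obstacle is bounding $\E\sum_i\mathcal W(x_i)$ by $Cn+CK^3$ under the common design marginal. Under $\pi_{\mathcal C}$, $\mathsf X$ has covariance $\Sigma_-(V)$, which equals the identity off an $s$-dimensional subspace. By Cauchy interlacing, all but $O(s)$ sample eigenvalues interlace with those of a white Wishart matrix $\mathsf Z\mathsf Z^{\top}/d$. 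The exceptional ones contribute at most $\mathcal W\le2K^2$ each, hence $O(K^3)$ in total. That is at most $O(K^2)$ after multiplying by $e^{-2Kh}\le n^{-1}$, which produces the $K^2/n$ term.

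For the Wishart bulk, I would invoke eigenvalue rigidity near the edges, at scale $n^{-2/3}$ up to logarithms, together with the square-root edge density. This gives $\sum_i\mathcal W(x_i)\lesssim n\int_{x_-}^{x_+}|x-x_\pm|^{-1/2}\,dx+n^{1/3}K^2\lesssim n$. The regularization $K^{-4}$ in \eqref{eq:edge-weight} caps the contribution of eigenvalues at the edges. The passage from $\pi$ to $\pi_{\mathcal C}$ costs $\pi(\mathcal C^c)$ by Lemma~\ref{lem:prior-concentration}.

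Eigenvalues outside $[x_--\upsilon,x_++\upsilon]$ have probability $Ce^{-cn\upsilon^2}$ by Gaussian concentration of extreme singular values. On that event I use $\TV\le1$, which yields the last term of \eqref{eq:TVaux}. Combining and taking square roots gives \eqref{eq:TVaux}. Finally, \eqref{eq:lower-degree} gives $ne^{-2Kh}\le1$ and $K^2/n\to0$, so a small fixed $\tau$ makes the distance at most $1/4$.
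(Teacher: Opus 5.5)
Your argument is correct. It follows the paper's skeleton: determinant cancellation, Gaussian posterior columns, simultaneous diagonalization of $\mathcal V_0,\mathcal V_1$, kernel identification, eigenvalue confinement, a weighted spectral average, then Pinsker with Cauchy--Schwarz. The one different ingredient is how you control $\E\sum_i\mathcal W(x_i)$.

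The paper proves $\E[n^{-1}\tr\mathcal W(\mathsf Z\mathsf Z^{\top}/d)]\le C$ in Lemma~\ref{lem:weighted-Wishart} by a global smooth trace comparison. It uses a compactly supported smooth version of $t\mapsto\mathcal W(t^2)$ on the Hermitian dilation, with $W^{6,1}$ norm $O(K^C)$. The comparison of \citet{Bandeira2023} then has error $O(n^{-1}K^C)$, and $\mathcal W$ is integrable against the square-root Marchenko--Pastur edges. The paper passes to $\mathsf X\mathsf X^{\top}/d$ by the rank inequality with $\operatorname{Var}_{\R}(\mathcal W)\le4K^2$. You use white-Wishart eigenvalue rigidity plus Cauchy interlacing instead. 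Your route yields a pathwise high-probability bound, which you convert to an expectation using $\sum_i\mathcal W(x_i)\le2nK^2$ on the failure event. It also makes the role of the edge regularization transparent, but it imports a local-law-level result. The paper's route needs only an averaged global comparison, using machinery it already invokes elsewhere.

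A few bookkeeping points need fixing.

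First, the bracket is exactly $\mathcal K_x(\lambda)$. After multiplying through by $\gamma^\circ\lambda$, the numerator is $\gamma^\circ\lambda x-\lambda[\lambda(\gamma^\circ-1+\lambda)+\gamma^\circ x(1-\lambda)]=\lambda^2(\gamma^\circ x-\gamma^\circ+1-\lambda)$. Your guessed form $\mathcal K_x(\lambda)/\lambda$ would not have been a harmless rescaling, because \eqref{eq:lower-transform} controls only $\int\mathcal K_x\,d(\nu_1-\nu_0)$.

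Second, since $(s+1)/d=O(K/n)$, the correction from $t_k$ is $O(\tau K/n)$ per eigenvalue, not $O(\tau/n)$. Squared and summed over $n$ eigenvalues, this is precisely the $\tau^2K^2/n$ term in \eqref{eq:TVaux}. The $O(K^3)$ from exceptional eigenvalues should simply be absorbed into $\E\sum_i\mathcal W(x_i)\le Cn$. Multiplying it by $e^{-2Kh}\le n^{-1}$ gives $K^3/n$, not $K^2/n$.

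Third, interlacing gives no lower bound on the $s$ smallest eigenvalues of $\mathsf X\mathsf X^{\top}/d$. For the confinement event, use $P_V\preceq\Sigma_-(V)\preceq I_d$, with $P_V$ a rank-$(d-s)$ projection into $\ker V^{\top}$, as the paper does. This holds for every $V$, so the detour through $\pi_{\mathcal C}$ is unnecessary for the unconditioned laws $\widetilde P_j$.

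Finally, $D_{\mathrm{KL}}\le C\norm{\mathcal V_0-\mathcal V_1}_F^2$ holds without discarding any event. This follows from $r-1-\log r\le(r-1)^2/r$, since both covariances dominate $(s_0/2)I_n$ and commute.
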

\begin{proof}
The determinant in the Gaussian design likelihood cancels the determinant factor in \eqref{eq:Vprior}; we use the resulting posterior to compute the conditional variances, and then bound their divergence using an integrable spectral-edge weight.

The dependence of the likelihood on $V$ is
\[
 \det(I_d+V\mathsf DV^{\top})^{n/2}
 \exp\left(-\tfrac12\tr(\mathsf X\mathsf X^{\top})
 -\tfrac12\sum_k\mathsf D_{kk}\norm{\mathsf X V_k}_2^2\right).
\]
Multiplication by \eqref{eq:Vprior} proves \eqref{eq:posterior-columns}. Integrating the linear Gaussian vector $\mathsf XV\vartheta_j$ gives $\mathcal V_j(\mathsf X)$. If $x$ is an eigenvalue of $\mathsf X\mathsf X^{\top}/d$, the corresponding conditional variance is
\[
 v_j(x)=\varpi_j+\tau\sum_{k=1}^s\frac{w_{jk}x}{t_k+\mathsf D_{kk}x}.
\]
The difference is
\[
 v_1(x)-v_0(x)=\tau\int\left(
 \frac{x}{1-(1-\lambda)/\gamma^\circ-(s+1)/d+(1-\lambda)x/\lambda}
 -\lambda\right)(\nu_1-\nu_0)(d\lambda).
\]
All denominators are bounded below on the fixed positive spectral interval used below. Removing $(s+1)/d$ changes the expression by $O(\tau K/n)$, and the remaining integrand is exactly \eqref{eq:lower-kernel}.

\proofstep{The sample eigenvalues.}
Given $V$, write $\mathsf X=\mathsf Z\Sigma_-(V)^{1/2}$, with $\mathsf Z$ standard Gaussian. Let $P_V$ project onto a $(d-s)$-dimensional subspace of $\ker V^{\top}$. Then
\[
 P_V\preceq\Sigma_-(V)\preceq I_d,\qquad
 \mathsf ZP_V\mathsf Z^{\top}/d
 \preceq\mathsf X\mathsf X^{\top}/d
 \preceq\mathsf Z\mathsf Z^{\top}/d.
\]
For an $n\times D$ standard Gaussian matrix with $D\ge n$, the singular-value inequality stated in \citet[Theorem~3]{Zwiernik2017} gives
\[
 \Pr\left(s_{\min}<\sqrt D-\sqrt n-t
       \text{ or }s_{\max}>\sqrt D+\sqrt n+t\right)\le2e^{-t^2/2}.
\]
Apply it with $D=d-s,d$ and $t=c_0\upsilon\sqrt d$. Both column counts exceed $n$ and both lower bounds are positive for sufficiently small fixed $c_0$ and large $n$. Since $s/d=o(\upsilon)$, every eigenvalue $x_i$ of $\mathsf X\mathsf X^{\top}/d$ belongs to $[x_--\upsilon,x_++\upsilon]$ on an event $\mathcal E$ satisfying $\widetilde P_j(\mathcal E^c)\le Ce^{-cn\upsilon^2}$.

\proofstep{The weighted spectral average.}
Lemma~\ref{lem:weighted-Wishart} gives $\E[n^{-1}\tr\mathcal W(\mathsf Z\mathsf Z^{\top}/d)]\le C$. The two matrices differ by rank at most $s$. The rank inequality for empirical spectral distribution functions, followed by integration by parts, gives
\[
 \left|\frac1n\tr\mathcal W(\mathsf X\mathsf X^{\top}/d)
       -\frac1n\tr\mathcal W(\mathsf Z\mathsf Z^{\top}/d)\right|
 \le\frac{s}{n}\operatorname{Var}_{\R}(\mathcal W)
 \le\frac{4sK^2}{n}=o(1).
\]
The rank inequality follows from eigenvalue interlacing for a rank-one perturbation, iterated $s$ times. Therefore the common design law satisfies $\E[n^{-1}\sum_i\mathcal W(x_i)]\le C$.

\proofstep{Conditional divergence.}
The matrices $\mathcal V_j(\mathsf X)$ commute. Their eigenvalues $v_j(x_i)$ are bounded above and below on $\mathcal E$, and are at least $s_0/2$ everywhere. The centered Gaussian likelihood formula gives
\[
 D_{\KL}(\widetilde P_0^{\mathsf Y\mid\mathsf X}\Vert
                 \widetilde P_1^{\mathsf Y\mid\mathsf X})
 =\tfrac12\sum_{i=1}^n
 \left(\frac{v_0(x_i)}{v_1(x_i)}-1-
                 \log\frac{v_0(x_i)}{v_1(x_i)}\right).
\]
Taylor's theorem on the compact interval of variance ratios, Lemma~\ref{lem:lower-measure}, and the finite correction above imply
\[
 \E\left[\ind_{\mathcal E}
 D_{\KL}(\widetilde P_0^{\mathsf Y\mid\mathsf X}\Vert
                 \widetilde P_1^{\mathsf Y\mid\mathsf X})\right]
 \le C\tau^2(ne^{-2Kh}+K^2/n).
\]
Because the design marginals agree, total variation of the joint laws is the average of the conditional total variations. Pinsker's inequality on $\mathcal E$, the Cauchy--Schwarz inequality for its expectation, and the bound one on $\mathcal E^c$ prove \eqref{eq:TVaux}. At the degree \eqref{eq:lower-degree}, $ne^{-2Kh}\le1$. A sufficiently small fixed $\tau$ proves the last assertion.
\end{proof}

\begin{lemma}[Weighted Gaussian covariance trace]\label{lem:weighted-Wishart}
Let $\mathsf Z$ be $n\times d$ standard Gaussian, with $d/n$ in a compact subset of $(1,\infty)$. For the function \eqref{eq:edge-weight} and $K=O(\log n)$, we have
$\E[n^{-1}\tr\mathcal W(\mathsf Z\mathsf Z^{\top}/d)]\le C$.
\end{lemma}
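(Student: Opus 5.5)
We need to bound $\E[n^{-1}\sum_i\mathcal W(x_i)]$, where $x_i$ are the eigenvalues of $\mathsf Z\mathsf Z^\top/d$. The weight $\mathcal W$ is a regularized inverse distance to the two Marchenko--Pastur edges $x_\pm$. The plan is to compare the empirical spectral measure with the Marchenko--Pastur law $\varrho_{\mathrm{MP}}$ of ratio $n/d$, whose support is $[x_-,x_+]$. Its density near each edge behaves like $\sqrt{|x-x_\pm|}$, so $\int\mathcal W\,d\varrho_{\mathrm{MP}}\le C\int_0^1 t^{1/2}t^{-1}dt\le C$ uniformly in $K$.

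\textbf{Splitting the weight.} Write $\mathcal W=\mathcal W_-+\mathcal W_+$ for the two edge terms and treat the lower edge; the upper edge is symmetric. Since $\mathcal W_-\le K^2$ everywhere, the region far from the edge and the exceptional events are harmless. On the event that some $x_i$ lies outside $[x_--1,x_++1]$, the crude bound is $K^2$. That event has probability $e^{-cn}$ by the Gaussian singular-value inequality used in Lemma~\ref{lem:conditional-comparison}, and $K^2e^{-cn}=o(1)$. It therefore remains to control the mass of the empirical spectral measure $\widehat\varrho_n$ near $x_-$.

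\textbf{Dyadic decomposition.} For $j\ge0$ set $I_j=\{x:2^{-j-1}<|x-x_-|\le2^{-j}\}$, stopping at $2^{-j}\approx K^{-2}$, and put $I_\ast=\{|x-x_-|\le K^{-2}\}$. Then
\[
\frac1n\sum_i\mathcal W_-(x_i)\le C+\sum_{j\le J}2^{j+1}\,\widehat\varrho_n(I_j)+K^2\,\widehat\varrho_n(I_\ast),\qquad 2^{-J}\asymp K^{-2}.
\]
It suffices to show $\E\widehat\varrho_n(\{|x-x_-|\le\eta\})\le C(\eta^{3/2}+n^{-1}\log^Cn)$ for all $\eta\ge K^{-2}$. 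Since $K^{-2}\gg n^{-2/3}$ (because $K=O(\log n)$), the term $n^{-1}\log^Cn$ contributes at most $\sum_j2^jn^{-1}\log^C n\le K^2n^{-1}\log^Cn=o(1)$. The term $\eta^{3/2}$ gives $\sum_j2^{j}2^{-3j/2}\le C$, and $K^2K^{-3}\le C$.

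\textbf{Local mass bound.} The estimate on $\E\widehat\varrho_n$ of an edge window is an edge local law for Wishart matrices. To prove it, use the Stieltjes transform $\widehat m$ at $z=x_-+\eta'+\ii\eta$ with $|\eta'|\le\eta$. The standard bound
\[
\widehat\varrho_n\bigl(\{|x-E|\le\eta\}\bigr)\le2\eta\operatorname{Im}\widehat m(E+\ii\eta)
\]
reduces the task to bounding $\E\operatorname{Im}\widehat m(z)\le C\operatorname{Im}m_{\mathrm{MP}}(z)+C(n\eta)^{-1}\log^Cn$. Near the edge, $\operatorname{Im}m_{\mathrm{MP}}(x_-+\ii\eta)\asymp\sqrt\eta$, which gives $\eta^{3/2}$. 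For Gaussian data, this expectation bound follows from the self-consistent equation for $\E\widehat m$ via Gaussian integration by parts, with a variance bound $\Var(\widehat m)\le C(n\eta)^{-2}$ from Poincar\'e. The stability of the quadratic Marchenko--Pastur equation near the edge is the main obstacle: its stability constant degenerates like $|z-x_-|^{-1/2}$. This is acceptable because we only need $\eta\ge K^{-2}\ge(\log n)^{-2}$, far above the local scale $n^{-2/3}$. On that mesoscopic scale the error $(n\eta)^{-2}\eta^{-1/2}$ remains polynomially small. Alternatively, one may simply cite the averaged local law for sample covariance matrices, for instance in \citet{KnowlesYin2017}, at spectral parameters with $\eta\ge(\log n)^{-2}$. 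Either route gives the window bound, and hence the lemma.
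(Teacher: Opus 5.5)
Your proof is correct, but it follows a different route from the paper's. The paper never localizes near the edges. It applies the smooth trace comparison of \citet[Corollary~2.9]{Bandeira2023} to the Hermitian dilation of $\mathsf Z^\top/\sqrt d$, using an even compactly supported function that agrees with $t\mapsto\mathcal W(t^2)$ on a fixed interval containing the limiting spectrum. Since the weight only resolves the scale $K^{-2}$, its $W^{6,1}$ norm is $O(K^C)$, so the expected normalized trace differs from the Marchenko--Pastur integral of $\mathcal W$ by $O(K^C/n)=o(1)$. That integral is bounded uniformly in $K$ by the same $t^{-1/2}$ computation you give, and the part outside the cutoff is handled by $\mathcal W\le 2K^2$ and the Gaussian singular-value inequality.

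You instead decompose $\mathcal W$ dyadically around $x_\pm$ and reduce everything to the expected empirical mass of windows $\{|x-x_\pm|\le\eta\}$ with $\eta\gtrsim K^{-2}$. You control that mass through $\widehat\varrho_n(\{|x-E|\le\eta\})\le 2\eta\operatorname{Im}\widehat m(E+\ii\eta)$ and a mesoscopic averaged law at the edge. Both arguments use the same scale separation, $K^{-2}\gtrsim(\log n)^{-2}\gg n^{-2/3}$. The paper spends it as $K^C/n\to0$ against a heavy free-probability comparison theorem and needs no stability analysis. Your route is more elementary for Gaussian data (integration by parts and Poincar\'e, or a citation of the averaged local law of \citet{KnowlesYin2017}). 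It also yields the more informative estimate $\E\widehat\varrho_n(\{|x-x_\pm|\le\eta\})\lesssim\eta^{3/2}+n^{-1}\log^C n$. The price is the self-consistent-equation step you only sketch: you must derive the perturbed Marchenko--Pastur equation for $\E\widehat m$ and run a continuity argument in $\eta$ to select the correct root where the stability constant degenerates like $\eta^{-1/2}$.

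Two small inaccuracies in that sketch are harmless. First, Poincar\'e gives only $\Var(\widehat m)=O(n^{-2}\eta^{-4})$, not $O((n\eta)^{-2})$. Second, for real Gaussian entries the dominant error in the self-consistent equation is not the variance but the term $(nd)^{-1}\E\tr(GSG)=O(n^{-1}\eta^{-2})$, with $S=\mathsf Z\mathsf Z^\top/d$ and $G=(S-z)^{-1}$. After multiplying by the stability factor $\eta^{-1/2}$, both are $n^{-1}$ times a power of $\log n$ when $\eta\ge(\log n)^{-2}$, which your target bound already allows.
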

\begin{proof}
The deterministic Marchenko--Pastur density is
$\gamma^\circ\sqrt{(x-x_-)(x_+-x)}/(2\pi x)$ on $(x_-,x_+)$. Its lower endpoint is bounded away from zero. Near either endpoint, the product of the density with the corresponding term of $\mathcal W$ is bounded by $Ct^{1/2}/\sqrt{t^2+K^{-4}}\le Ct^{-1/2}$. Its integral is therefore bounded uniformly in $K$.

Apply \citet[Corollary~2.9]{Bandeira2023} to the Hermitian dilation of $\mathsf Z^{\top}/\sqrt d$ and an even smooth compactly supported function agreeing with $t\mapsto\mathcal W(t^2)$ on a fixed interval containing the deterministic spectrum. Its $W^{6,1}$ norm is at most $CK^C$ by differentiation of \eqref{eq:edge-weight}. The comparison parameter is $O(n^{-1})$, so the normalized trace error is $O(n^{-1}K^C)=o(1)$. Subtract the $d-n$ structural zero eigenvalues of the dilation to recover the bottom-block trace, and divide by the bounded ratio of dimensions. Outside the cutoff interval, $\mathcal W\le2K^2$ and the Gaussian singular-value inequality makes the expectation exponentially small. This proves the claim. The result used here is a scalar trace comparison; its exact $W^{6,1}$ form also follows from Lemma~\ref{lem:smooth-calculus}.
\end{proof}

\section{Results on the elliptic integral}\label{app:elliptic}
This appendix proves the formulas for $u_\delta$ and $q_\delta$ used in Section~\ref{sec:bias-var}, the geometry of the level curve $\Lambda_\delta$, and the contour formula \eqref{eq:gK-contour}. The identification of a Dirichlet solution with harmonic measure is standard; see \citet[Sections~4.1 and~4.3]{ransford1995potential}. The elliptic-function identities used below are standard formulas for the Jacobi functions; see \citet[Sections~22.11--22.12, 22.2, 22.21, 22.3--22.35, and~22.4]{WhittakerWatson1920}.

\begin{lemma}[Potential, period, and level curve]\label{lem:geometry}
Recall
\[
 k=\frac{R_\delta-L+\ell}{R_\delta+L-\ell},
 \qquad
 T_\delta(z)=\frac{R_\delta-L+z}{R_\delta+L-z},
\]
and let $b_\delta$ be defined by \eqref{eq:potential-amplitude}. Then $0<b_\delta(z)<1$ for $z\in\Ddisk_\delta\setminus[\ell,L]$, and $b_\delta$ extends continuously with value one on the slit and zero on $\Gamma_\delta$. Consequently, the function in \eqref{eq:explicit-potential} is the unique bounded solution of \eqref{eq:potential}.

For the local harmonic conjugate in \eqref{eq:complex-potential}, the corresponding local determinations of $q_\delta$ agree on overlaps up to integer multiples of $2\pi\ii$. A counterclockwise circuit once around $[\ell,L]$ increases $q_\delta$ by $2\pi\ii$. Moreover,
\[
 \operatorname{Re}q_\delta(z)
 =\frac{\pi\Kell(k^2)}{\Kell(1-k^2)}u_\delta(z),
\]
so that $\operatorname{Re}q_\delta=0$ on the slit and
$
 \operatorname{Re}q_\delta
 =\frac{\pi\Kell(k^2)}{\Kell(1-k^2)}
$
on $\Gamma_\delta$. The two traces on the open slit satisfy
\[
 q_{\delta,-}(x)=-q_{\delta,+}(x)\pmod{2\pi\ii},
 \qquad \ell<x<L.
\]
At zero, we have
\begin{equation}\label{eq:q0-explicit}
 q_\delta(0)
 =\frac{\pi}{\Kell(1-k^2)}
 \left[
 \Kell(k^2)-
 \Fell\left(
 \arcsin\frac{R_\delta-L}{k(R_\delta+L)}
 \,\middle|\,k^2
 \right)
 \right]>0.
\end{equation}

For all sufficiently small $\delta$, the level set
\[
 \Lambda_\delta
 =
 \{z\in\Ddisk_\delta\setminus[\ell,L]:u_\delta(z)=\delta\}
\]
is an analytic Jordan curve whose bounded complementary component contains $[\ell,L]$ but not zero, and the Hausdorff distance
\[
 \operatorname{dist}_{\mathrm H}(\Lambda_\delta,[\ell,L])
 \le C\delta.
\]
On the region $\{z:\delta<u_\delta(z)<1\}$,
$
 \frac{u_\delta(z)-\delta}{1-\delta}
$
is the harmonic measure of $\Gamma_\delta$. In particular, \eqref{eq:alpha-delta} holds, and the formulas extend continuously to $\delta=0$ with
$
 u_0(0)=\alpha(\kappa,\gamma).
$
Under \eqref{eq:aspect-regime}, after fixing the lower and upper bounds for $\gamma$, there are constants $0<c<C<\infty$ such that, for all sufficiently small $\delta$,
$
 c\le\alpha_\delta\le1-c
$,
$
 c\le q_\delta(0)\le C
$,
$
 c\le
 \frac{\pi\Kell(k^2)}{\Kell(1-k^2)}-q_\delta(0)
 \le C.
$
Finally, the contour expression \eqref{eq:gK-contour} follows from these formulas and Jacobi's imaginary transformation.
\end{lemma}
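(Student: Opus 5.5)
The plan is to reduce everything to the classical conformal map of a doubly connected domain onto a rectangle. The first step moves the outer circle to a standard position. The map $T_\delta$ is a M\"obius transformation with $T_\delta(L)=1$, $T_\delta(L-R_\delta)=0$ and $T_\delta(L+R_\delta)=\infty$. It therefore sends $\Gamma_\delta$ to the imaginary axis and $\Ddisk_\delta$ to the right half-plane $\{\operatorname{Re}w>0\}$. Since $T_\delta(\ell)=k$, the slit $[\ell,L]$ goes to the real segment $[k,1]$ and zero goes to $T_\delta(0)=(R_\delta-L)/(R_\delta+L)$.

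The resulting domain, the right half-plane minus $[k,1]$, is symmetric under $w\mapsto-w$ reflection across the imaginary axis. Reflecting doubles it to the plane minus $[-1,-k]\cup[k,1]$. On this doubled domain, $w\mapsto\operatorname{sn}^{-1}(w/k\mid k^2)$, equivalently $\Fell(\arcsin(w/k)\mid k^2)$, maps onto a rectangle of half-width $\Kell(k^2)$ and height $\Kell(1-k^2)$. The imaginary axis goes to the line $\operatorname{Re}=0$ and the slit to $\operatorname{Re}=\pm\Kell(k^2)$ (Whittaker--Watson 22.3, 22.35).

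The Dirichlet problem on the rectangle is solved by a linear function of the real part. This gives $u_\delta=1-\operatorname{Re}\Fell(\cdot)/\Kell(k^2)$, taken in the appropriate real form. The real part of the incomplete integral with complex amplitude can be written as $\Fell(\arcsin b\mid k^2)$ for a real amplitude $b$. That $b$ is the root of the quadratic obtained by separating $\operatorname{sn}$ into real and imaginary parts via the addition formula. I would verify \eqref{eq:potential-amplitude} against this root, using $|1-w^2|$ and $|k^2-w^2|$, and check the boundary values: $b=1$ on $[k,1]$ and $b=0$ on $\operatorname{Re}w=0$. Uniqueness of the bounded solution then follows from the maximum principle together with the fact that the two endpoints are polar.

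Evaluating at $w=T_\delta(0)$, which is real and lies in $(0,k)$, gives \eqref{eq:explicit-potential}. With the normalization $\widetilde u_\delta(0)=0$ this also gives \eqref{eq:q0-explicit}.

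For the period, I would compute the flux of $u_\delta$ across $\Gamma_\delta$. In rectangle coordinates this flux is the vertical period divided by the width, so $\oint\partial_nu_\delta\,ds=2\Kell(1-k^2)/\Kell(k^2)$, with the full vertical period $2\Kell(1-k^2)$. The harmonic conjugate therefore increases by this amount around the slit, and the prefactor $\pi\Kell(k^2)/\Kell(1-k^2)$ turns the increase into $2\pi\ii$. The relation $q_{\delta,-}=-q_{\delta,+}$ modulo $2\pi\ii$ on the slit follows from two facts: $\operatorname{Re}q=0$ there, and the conjugate is odd under $z\mapsto\bar z$, with the period accounting for the branch. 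Equivalently, $q_\delta$ is the logarithm of the conformal map onto the annulus $\{1<|\zeta|<e^{h}\}$.

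The level curve and constants come next. Since $u_\delta$ vanishes linearly in the normal direction near the interior of the slit and like a square root near its ends, $\{u_\delta=\delta\}$ is the preimage of a vertical line in the rectangle. That preimage is an analytic Jordan curve, and its Hausdorff distance to the slit is $O(\delta)$, or $O(\delta^2)$ near the endpoints. Because $u_\delta(0)$ is bounded below by a constant while $u_\delta$ is of order $\delta$ on $\Lambda_\delta$, zero lies outside the curve. The function $(u_\delta-\delta)/(1-\delta)$ is harmonic with boundary values $0$ and $1$, which makes it the harmonic measure.

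For \eqref{eq:alpha-delta}, substitute $R_\delta=\sqrt{\gamma(1+\delta)^2}L$ and $L=1$ after scaling. Then $k$ becomes $\rho$ and $(R_\delta-L)/(k(R_\delta+L))$ becomes $\xi/\rho$, both evaluated at $\gamma(1+\delta)^2$. The $O(\delta)$ claim follows from smoothness of $\Fell$ and $\Kell$ in their parameters on a compact set, and the same compactness gives the uniform two-sided bounds on $\alpha_\delta$ and $q_\delta(0)$.

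Last, parametrizing $\Gamma_\delta$ puts $T_\delta$ on the imaginary axis as $\ii\cot(t/2)$, up to orientation. Jacobi's imaginary transformation, $\operatorname{sn}(\ii y\mid k^2)=\ii\operatorname{sc}(y\mid 1-k^2)$, then yields $y(t)$ and hence \eqref{eq:gK-contour}.

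I expect the main obstacle to be identifying the explicit amplitude $b_\delta$, that is, checking that the real part of the complex-amplitude integral reduces to \eqref{eq:potential-amplitude}. Tracking branches and signs of $\widetilde u_\delta$ across the slit will also need care. For $b_\delta$ I would first try the classical real-part decomposition formula for $\Fell(\phi+\ii\psi)$, using the biquadratic relation for $\cot^2$, and simplify it.
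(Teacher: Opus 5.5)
Your proposal is correct and follows essentially the paper's route: the M\"obius map $T_\delta$ to the right half-plane, the Jacobi coordinate $T_\delta=k\sn(U_\delta\mid k^2)$ with $u_\delta=1-\operatorname{Re}U_\delta/\Kell(k^2)$, the amplitude $b_\delta=\sn(\operatorname{Re}U_\delta\mid k^2)$ obtained from the addition formula and the imaginary transformation, the period coming from the $2\ii\Kell(1-k^2)$ periodicity (your flux computation expresses the same fact), the level curve as the image of the line $\operatorname{Re}U_\delta=(1-\delta)\Kell(k^2)$, compactness for the uniform bounds, and $T_\delta(\mu(t))=\ii\cot(t/2)$ for the contour formula. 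There are two small slips: reflection across the imaginary axis is $w\mapsto-\bar w$, not $w\mapsto-w$; and the doubled two-slit domain corresponds to a cylinder of width $2\Kell(k^2)$ and height $2\Kell(1-k^2)$ with horizontal edges identified, not a rectangle of height $\Kell(1-k^2)$ --- your later use of the full period $2\Kell(1-k^2)$ already reflects this.
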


\begin{remark}
The formula \eqref{eq:explicit-potential} gives the harmonic measure of the outer boundary of the present slit disk in terms of the incomplete elliptic integral of the first kind. Harmonic measure as the solution of the Dirichlet problem is described in \citet[Section~4.3]{ransford1995potential}. The inverse elliptic integral, addition formulas, periods, and imaginary transformation used to obtain this specialization are the classical Jacobi formulas in \citet[Sections~22.11--22.12, 22.2, 22.21, 22.3--22.35, and~22.4]{WhittakerWatson1920}. The formulas for $b_\delta$, $u_\delta$, and $q_\delta$ below are derived from those identities for the interval $[\ell,L]$ and circle $\Gamma_\delta$.
\end{remark}

\subsection{Optimal exponent asymptotics}\label{sec:exponent-asymptotics}

The following proposition shows the asymptotic behavior of the exponent $\alpha(\kappa,\gamma)$ when both $\kappa$ and $\gamma$ diverge.

\begin{proposition}[Monotonicity and joint asymptotic expansion]
      \label{prop:exponent-asymptotics}
      The function $\alpha(\kappa,\gamma)$ is strictly decreasing
      in $\kappa$ for every fixed $\gamma>1$, and strictly decreasing
      in $\gamma$ for every fixed $\kappa>1$.
      
      There exist constants $C>0$ and $\kappa_0,\gamma_0>1$ such that,
      for every $\kappa\ge\kappa_0$ and $\gamma\ge\gamma_0$,
      \begin{align}
       \frac{\sqrt\kappa\,\log(16\gamma)}4
       \alpha(\kappa,\gamma)
       ={}&1+
       \frac1\kappa
       \left(\frac13-\frac{2}{\log(16\gamma)}\right)\notag\\
       &+\frac1\gamma
       \left(\frac14+\frac{1}{2\log(16\gamma)}\right)
       +R(\kappa,\gamma),
       \label{eq:alpha-joint-second}
      \end{align}
      where
      \[
       |R(\kappa,\gamma)|
       \le C\left(
       \kappa^{-2}+\frac1{\kappa\gamma}+\gamma^{-2}
       \right).
      \]
      In particular, after increasing $C$ if necessary,
      \begin{equation}\label{eq:alpha-joint-uniform}
       \left|
       \frac{\sqrt\kappa\,\log(16\gamma)}4
       \alpha(\kappa,\gamma)-1
       \right|
       \le C(\kappa^{-1}+\gamma^{-1})
      \end{equation}
      throughout this range.
      \end{proposition}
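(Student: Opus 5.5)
The plan has two independent parts. Monotonicity will come from the harmonic-measure interpretation in Lemma~\ref{lem:geometry}, and the expansion from an exact Jacobi reduction followed by a two-variable Taylor expansion. Throughout, normalize $L=1$ and $\ell=\kappa^{-1}$; the exponent is scale invariant, since $\xi$ and $\rho$ depend only on $(\kappa,\gamma)$. By Lemma~\ref{lem:geometry} at $\delta=0$, $\alpha(\kappa,\gamma)=\omega_{\kappa,\gamma}(0)$. Here $\omega_{\kappa,\gamma}$ is the harmonic measure of the circle $|z-1|=\sqrt\gamma$ in the slit disk $\Omega_{\kappa,\gamma}=\{|z-1|<\sqrt\gamma\}\setminus[\kappa^{-1},1]$.

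\emph{Monotonicity in $\kappa$.} Take $\kappa<\kappa'$. Then $\Omega_{\kappa',\gamma}\subset\Omega_{\kappa,\gamma}$. On the boundary of the smaller domain, $\omega_{\kappa',\gamma}-\omega_{\kappa,\gamma}$ vanishes on the outer circle and on $[\kappa^{-1},1]$. On $[\kappa'^{-1},\kappa^{-1})$ it equals $-\omega_{\kappa,\gamma}<0$, because that segment is interior to the larger domain. The strong maximum principle then gives $\omega_{\kappa',\gamma}(0)<\omega_{\kappa,\gamma}(0)$.

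\emph{Monotonicity in $\gamma$.} Take $\gamma<\gamma'$. Compare the two functions on $\Omega_{\kappa,\gamma}$. On the inner circle, $\omega_{\kappa,\gamma'}<1=\omega_{\kappa,\gamma}$, and on the slit both vanish. Hence $\omega_{\kappa,\gamma'}(0)<\omega_{\kappa,\gamma}(0)$, again strictly by the strong maximum principle.

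\emph{Exact reduction for the expansion.} Write $m=\rho^2$, $k'=\sqrt{1-\rho^2}$ and $\varphi=\arcsin(\xi/\rho)$. I will use the Jacobi complementary-argument identity: $\Fell(\varphi\mid m)+\Fell(\theta\mid m)=\Kell(m)$ whenever $\tan\varphi\tan\theta=1/k'$ (see \citet[Sections~22.2--22.21]{WhittakerWatson1920}). This converts the definition \eqref{eq:alpha-exponent} exactly into
\[
 \alpha(\kappa,\gamma)=\frac{\Fell(\theta\mid\rho^2)}{\Kell(\rho^2)},\qquad
 \tan\theta=\frac{\sqrt{\rho^2-\xi^2}}{k'\xi}.
\]
In the joint regime both $k'$ and $\theta$ are small. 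Directly, $1-\rho\approx 2(1-\kappa^{-1})/\sqrt\gamma$, so $k'^2\approx 4(1-\kappa^{-1})\gamma^{-1/2}$. Likewise $\rho^2-\xi^2=(\rho-\xi)(\rho+\xi)$ with $\rho-\xi\approx 4\kappa^{-1}\gamma^{-1/2}$. Therefore $\tan\theta$ is of order $\kappa^{-1/2}$, and the leading constant comes out of the exact rational expressions. The denominator uses the classical expansion
\[
 \Kell(m)=\log\frac4{k'}+\frac{k'^2}{4}\Bigl(\log\frac4{k'}-1\Bigr)+O\bigl(k'^4\log(1/k')\bigr).
\]
Substituting $k'$ gives $\log(4/k')=\tfrac14\log(16\gamma)+O(\kappa^{-1}+\gamma^{-1/2})$. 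The numerator is $\Fell(\theta\mid m)=\theta+(m/6)\theta^3+O(\theta^5)$, and $\theta$ is expanded via $\arctan$ of the exact expression. Multiplying by $\sqrt\kappa\log(16\gamma)/4$ should then yield $1$ plus the stated $\kappa^{-1}$ and $\gamma^{-1}$ correction terms, with $\log(16\gamma)^{-1}$ coefficients arising from the ratio of the $\log$-corrections in $\Kell$ and $\theta$.

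\emph{Main obstacle.} The difficult step is the bookkeeping in this joint two-variable expansion. Naive expansion in $\gamma^{-1/2}$ produces half-integer powers, and these must cancel between numerator and denominator to leave only $\gamma^{-1}$ terms. I would first verify this cancellation by working with the exact variables $s=\gamma^{-1/2}$ and $t=\kappa^{-1}$. Concretely, I would write $k'^2$, $\rho^2-\xi^2$ and $\xi^2$ as explicit rational functions of $(s,t)$, and check that $\tan\theta\cdot\sqrt\kappa$ and $\Kell-\tfrac14\log(16\gamma)$ are even in $s$ to the needed order.

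For the remainder, the bound $C(\kappa^{-2}+(\kappa\gamma)^{-1}+\gamma^{-2})$ must be uniform on the whole quadrant $\kappa\ge\kappa_0$, $\gamma\ge\gamma_0$, not merely pointwise. I would obtain this from Taylor remainders of these analytic functions on a fixed polydisk in $(s,t)$ near the origin. The $\log$ terms are treated as a separate parameter $\Lambda=\log(16\gamma)$, with coefficients bounded uniformly in $\Lambda$. Finally, \eqref{eq:alpha-joint-uniform} follows immediately from \eqref{eq:alpha-joint-second}.
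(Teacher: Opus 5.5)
Your monotonicity argument is the same as the paper's (harmonic measure of the outer circle at $0$, domain comparison, strict maximum principle) and is fine. The gap is in the expansion, which as written does not establish \eqref{eq:alpha-joint-second}.

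\textbf{Insufficient precision.} In your representation $\alpha=\Fell(\theta\mid\rho^2)/\Kell(\rho^2)$, the small parameter is $k'^2=1-\rho^2=4\beta/(1+\beta)^2$ with $\beta=(1-\kappa^{-1})\gamma^{-1/2}$. The expansion of $\Kell$ you quote has error $O(k'^4\log(1/k'))$. After division by $\tfrac14\log(16\gamma)$, this leaves a relative error of order $\gamma^{-1}$. That is exactly the order of the term whose coefficient $\tfrac14+\tfrac{1}{2\log(16\gamma)}$ you must identify. At that precision, after checking the $\gamma^{-1/2}$ cancellation, you get only the $\kappa^{-1}$ coefficient and \eqref{eq:alpha-joint-uniform}. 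For the full statement you would need the $k'^4$ and $k'^6$ terms of $\Kell$ and $\theta$ through order $\gamma^{-3/2}$. You would also need proofs that the contributions of orders $\gamma^{-1/2}$, $\kappa^{-1}\gamma^{-1/2}$ and $\gamma^{-3/2}$ vanish, since none is absorbed by $C(\kappa^{-2}+(\kappa\gamma)^{-1}+\gamma^{-2})$.

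\textbf{The proposed check fails.} Neither factor is even in $s=\gamma^{-1/2}$. Exactly,
\[
 \kappa\tan^2\theta=\frac{1-(1-\kappa^{-1})\gamma^{-1}}{(1-\kappa^{-1})(1-\gamma^{-1/2})^2},
 \qquad
 \Kell(\rho^2)=\frac{1+\beta}{2}\,\Kell(1-\beta^2).
\]
Both carry odd powers of $s$, and these cancel only in the ratio, through a common factor $(1+\beta)/2$. Also, $\rho-\xi\approx2\kappa^{-1}\gamma^{-1/2}$, not $4\kappa^{-1}\gamma^{-1/2}$; the latter would give leading constant $\sqrt2$ instead of $1$.

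\textbf{Missing ingredient: Landen's transformation.} The paper uses it to remove the half-integer powers at the source. It writes $\Kell(\rho^2)-\Fell(\arcsin(\xi/\rho)\mid\rho^2)=\int_\xi^\rho dx/\sqrt{(\rho^2-x^2)(1-x^2)}$, which is your $\Fell(\theta\mid\rho^2)$. Substituting $x=(1-\gamma^{-1/2}v)/(1+\gamma^{-1/2}v)$ extracts the same factor $(1+\beta)/2$ that Landen gives for $\Kell(\rho^2)$. Cancelling it and substituting $v=(1-a)/(1-at^2)$ yields the exact identity
\[
 \alpha(\kappa,\gamma)=\frac{2\sqrt a}{\Kell(1-(1-a)^2b)}\int_0^1\frac{dt}{\sqrt{(1-at^2)(1-(1-a)^2b-at^2)}},\qquad a=\kappa^{-1},\ b=\gamma^{-1}.
\]
Here $\gamma$ enters only through $b$. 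The integrand is regular, so the integral is $1+a/3+b/2+O(a^2+ab+b^2)$. The complete integral has complementary parameter $\varepsilon=(1-a)^2b\asymp\gamma^{-1}$. Hence the two-term expansion $\Kell(1-\varepsilon)=\log(4/\sqrt\varepsilon)+\frac{\varepsilon}{4}(\log(4/\sqrt\varepsilon)-1)+O(\varepsilon^2\log(1/\varepsilon))$ already has the required precision.

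You can repair your route by applying this Landen step to both $\Fell(\theta\mid\rho^2)$ and $\Kell(\rho^2)$. Without it, the $\gamma^{-1}$ coefficient and the stated remainder remain unproved.
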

      
      \begin{proof}
      We prove monotonicity by comparing harmonic measures, and obtain
      the expansion from an exact integral representation whose
      numerator remains regular as both parameters diverge.
      
      \proofstep{Monotonicity.}
      For $\kappa,\gamma>1$, let $u_{\kappa,\gamma}$ be the bounded
      harmonic function on
      \[
       \mathcal D_{\kappa,\gamma}
       =
       \{z\in\C:|z-1|<\sqrt\gamma\}
       \setminus[\kappa^{-1},1]
      \]
      with boundary values zero on the slit and one on the circle.
      Lemma~\ref{lem:geometry}, with $L=1$, $\ell=\kappa^{-1}$ and
      $\delta=0$, gives
      $
       \alpha(\kappa,\gamma)=u_{\kappa,\gamma}(0).
      $
      The same lemma gives continuity up to the boundary, including
      the slit endpoints. We use the maximum principle and its strict
      interior form; see \citet[Section~2.3]{ransford1995potential}.
      The interpretation as harmonic measure is the Dirichlet-problem
      characterization in \citet[Section~4.3]{ransford1995potential}.
      
      Fix $\gamma>1$ and take $1<\kappa_1<\kappa_2$. Then
      $\mathcal D_{\kappa_2,\gamma}\subset
      \mathcal D_{\kappa_1,\gamma}$.
      The difference
      $u_{\kappa_1,\gamma}-u_{\kappa_2,\gamma}$ is harmonic on
      $\mathcal D_{\kappa_2,\gamma}$ and has nonnegative boundary
      values. It vanishes on the outer circle and the common slit.
      On the added interval
      $(\kappa_2^{-1},\kappa_1^{-1})$, however,
      $u_{\kappa_1,\gamma}>0$ and $u_{\kappa_2,\gamma}=0$.
      Thus the difference is not identically zero, and the strict
      maximum principle gives
      $
       u_{\kappa_1,\gamma}(0)>u_{\kappa_2,\gamma}(0).
      $
      
      Next fix $\kappa>1$ and take $1<\gamma_1<\gamma_2$.
      On $\mathcal D_{\kappa,\gamma_1}$, the difference
      $u_{\kappa,\gamma_1}-u_{\kappa,\gamma_2}$ is harmonic.
      Its boundary value is zero on the slit and strictly positive
      on the smaller circle: there $u_{\kappa,\gamma_1}=1$, while
      $u_{\kappa,\gamma_2}<1$ because that circle lies inside the
      larger domain. The strict maximum principle therefore gives
      $
       u_{\kappa,\gamma_1}(0)>u_{\kappa,\gamma_2}(0).
      $
      This proves both monotonicity assertions.
      
      \proofstep{An exact integral representation.}
      Put $a=\kappa^{-1}$ and $b=\gamma^{-1}$. Then
      \[
       \rho=\frac{1-(1-a)\sqrt b}{1+(1-a)\sqrt b},
       \qquad
       \xi=\frac{1-\sqrt b}{1+\sqrt b},
       \qquad 0<\xi<\rho<1.
      \]
      The defining elliptic integrals give
      \[
       \Kell(\rho^2)
       -
       \Fell(\arcsin(\xi/\rho)\mid\rho^2)
       =
       \int_\xi^\rho
       \frac{dx}{\sqrt{(\rho^2-x^2)(1-x^2)}}.
      \]
      Substitute $x=(1-\sqrt b\,v)/(1+\sqrt b\,v)$.
      The integral becomes
      \[
       \frac{1+(1-a)\sqrt b}{2}
       \int_{1-a}^1
       \frac{dv}{
       \sqrt{v(v-1+a)(1-(1-a)bv)}
       }.
      \]
      Landen's transformation, in the parameter convention for
      $\Kell$, states that
      \[
       \Kell(\rho^2)
       =
       \frac{1+(1-a)\sqrt b}{2}
       \Kell\bigl(1-(1-a)^2b\bigr);
      \]
      see \citet[formula~17.3.29]{AbramowitzStegun1964}.
      All arguments belong to $(0,1)$, so only real positive
      determinations are involved.
      
      Dividing the preceding two expressions and substituting
      $v=(1-a)/(1-at^2)$ yields the exact identity
      \begin{equation}\label{eq:alpha-unit-integral}
       \alpha(\kappa,\gamma)
       =
       \frac{2\sqrt a}{\Kell(1-(1-a)^2b)}
       \int_0^1
       \frac{dt}{
       \sqrt{(1-at^2)(1-(1-a)^2b-at^2)}
       }.
      \end{equation}
      This identity holds for every $0<a,b<1$.
      
      \proofstep{Uniform expansion of the numerator.}
      For $0\le a,b\le1/4$ and $0\le t\le1$,
      $
       1-at^2\ge\frac34,
       1-(1-a)^2b-at^2\ge\frac12.
      $
      Consequently, the integrand in
      \eqref{eq:alpha-unit-integral} and its first two derivatives
      with respect to $(a,b)$ are uniformly bounded on this set.
      At $(a,b)=(0,0)$, its value is one and its first derivatives
      are $t^2$ and $1/2$, respectively. Taylor's theorem, followed
      by integration over $t$, therefore gives
      \begin{equation}\label{eq:alpha-numerator-expansion}
       \int_0^1
       \frac{dt}{
       \sqrt{(1-at^2)(1-(1-a)^2b-at^2)}
       }
       =
       1+\frac a3+\frac b2+O(a^2+ab+b^2),
      \end{equation}
      with a constant independent of $a,b$.
      
      \proofstep{Uniform expansion of the complete elliptic integral.}
      The hypergeometric representation of $\Kell$ and the convergent
      continuation formula in
      \citet[formulas~17.3.9 and~15.3.10]{AbramowitzStegun1964}
      give, for $0<\varepsilon<1$, we have
      \[
       \Kell(1-\varepsilon)
       =
       \sum_{j=0}^\infty
       \left(\frac{\binom{2j}{j}}{4^j}\right)^2
       \varepsilon^j
       \left[
       \log\frac4{\sqrt\varepsilon}
       -2(H_{2j}-H_j)
       \right],
      \]
      where $H_0=0$ and $H_j=\sum_{i=1}^j i^{-1}$.
      The terms with $j=0,1$ are
      \[
       \log\frac4{\sqrt\varepsilon}
       \quad\text{and}\quad
       \frac{\varepsilon}{4}
       \left(\log\frac4{\sqrt\varepsilon}-1\right).
      \]
      For $j\ge1$,  we have
      \[
       0\le H_{2j}-H_j\le\log2,
       \qquad
       0<
       \left(\frac{\binom{2j}{j}}{4^j}\right)^2
       \le1.
      \]
      Thus, for $0<\varepsilon\le1/4$, the absolute sum of the
      terms with $j\ge2$ is bounded by
      \[
       \left(\log\frac4{\sqrt\varepsilon}+2\log2\right)
       \frac{\varepsilon^2}{1-\varepsilon}
       \le
       C\varepsilon^2\log\frac4{\sqrt\varepsilon}.
      \]
      We have therefore established the uniform expansion
      \begin{equation}\label{eq:K-log-expansion}
       \Kell(1-\varepsilon)
       =
       \log\frac4{\sqrt\varepsilon}
       +
       \frac{\varepsilon}{4}
       \left(\log\frac4{\sqrt\varepsilon}-1\right)
       +
       O\left(
       \varepsilon^2\log\frac4{\sqrt\varepsilon}
       \right).
      \end{equation}
      
      Now take $\varepsilon=(1-a)^2b$ and write
      $\mathcal L=\log(4/\sqrt b)$. Uniformly for
      $0<a,b\le1/4$,
      \[
       \log\frac4{\sqrt\varepsilon}
       =
       \mathcal L-\log(1-a)
       =
       \mathcal L+a+O(a^2).
      \]
      Since $\varepsilon=b+O(ab)$, substituting into
      \eqref{eq:K-log-expansion} gives
      \[
       \Kell(1-(1-a)^2b)
       =
       \mathcal L+a+\frac b4(\mathcal L-1)
       +O\bigl(\mathcal L(a^2+ab+b^2)\bigr).
      \]
      In particular,  we have
      \begin{equation}\label{eq:alpha-denominator-expansion}
       \frac{\Kell(1-(1-a)^2b)}{\mathcal L}
       =
       1+\frac a{\mathcal L}
       +\frac b4\left(1-\frac1{\mathcal L}\right)
       +O(a^2+ab+b^2).
      \end{equation}
      Here $\mathcal L\ge\log8$, so the constants remain uniform
      even when $a$ and $b$ approach zero at different rates.
      
      \proofstep{Combining the expansions.}
      Equations~\eqref{eq:alpha-unit-integral},
      \eqref{eq:alpha-numerator-expansion}, and
      \eqref{eq:alpha-denominator-expansion}, together with
      $(1+x)^{-1}=1-x+O(x^2)$ for sufficiently small $|x|$, imply
      \[
       \frac{\mathcal L}{2\sqrt a}\alpha(\kappa,\gamma)
       =
       1+a\left(\frac13-\frac1{\mathcal L}\right)
        +b\left(\frac14+\frac1{4\mathcal L}\right)
        +O(a^2+ab+b^2).
      \]
      All remainders are bounded by a single constant throughout
      a sufficiently small rectangle $0<a\le a_0$, $0<b\le b_0$.
      Using $a=\kappa^{-1}$, $b=\gamma^{-1}$, and
      $2\mathcal L=\log(16\gamma)$ proves
      \eqref{eq:alpha-joint-second} with the stated bound on
      $R(\kappa,\gamma)$.
      The coefficients of $\kappa^{-1}$ and $\gamma^{-1}$ are
      uniformly bounded, so \eqref{eq:alpha-joint-uniform} follows.
      Finally, $\log(16\gamma)/\log\gamma\to1$, which proves
      \eqref{eq:alpha-joint}.
      \end{proof}

\subsection{Jacobi formulas}

The Jacobi functions are determined near zero by $s=\Fell(\phi\mid m)$, $\sn(s\mid m)=\sin\phi$, $\cn(s\mid m)=\cos\phi$, and $\dn(s\mid m)=(1-m\sin^2\phi)^{1/2}$, and elsewhere by analytic continuation. We use the parameter $m$, not the modulus, as their second argument.

\begin{lemma}[Jacobi elliptic-function identities; {\citealp[Sections~22.11--22.12, 22.2, 22.21, 22.3--22.35, and~22.4]{WhittakerWatson1920}}]\label{lem:elliptic-identities}
For $0<a<1$, let
\[
 s_u=\sn(u\mid a),\qquad
 c_u=\cn(u\mid a),\qquad
 d_u=\dn(u\mid a),
\]
and define $s_v,c_v,d_v$ analogously. Then we have $s_u^2+c_u^2=1, d_u^2+a s_u^2=1$, 
\begin{align}\label{eq:elliptic-identities}
 \sn(\Fell(\phi\mid a)\mid a)&=\sin\phi,
 \notag\\
 \sn(u+v\mid a)
 &=\frac{s_uc_vd_v+s_vc_ud_u}{1-a s_u^2s_v^2}, 
 \cn(u+v\mid a)
 =\frac{c_uc_v-s_us_vd_ud_v}{1-a s_u^2s_v^2},\notag\\
 \dn(u+v\mid a)
 &=\frac{d_ud_v-a s_us_vc_uc_v}{1-a s_u^2s_v^2}.
\end{align}
Jacobi's imaginary transformation is
\begin{equation}\label{eq:imaginary-transform}
\begin{aligned}
 \sn(\ii y\mid a)
 &=\ii\frac{\sn(y\mid1-a)}{\cn(y\mid1-a)},&
 \cn(\ii y\mid a)
 &=\frac1{\cn(y\mid1-a)},\\
 \dn(\ii y\mid a)
 &=\frac{\dn(y\mid1-a)}{\cn(y\mid1-a)}.
\end{aligned}
\end{equation}
The function $\sn(\,\cdot\mid a)$ has periods $4\Kell(a)$ and $2\ii\Kell(1-a)$; hence $\sn^2(\,\cdot\mid a)$ has periods $2\Kell(a)$ and $2\ii\Kell(1-a)$.
\end{lemma}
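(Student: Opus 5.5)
The plan is to derive every identity from the defining differential equations, using the identity theorem to pass from a neighbourhood of zero to the full domain of meromorphy. Near $s=0$ the inverse-function definition $s=\Fell(\phi\mid a)$ gives $d\phi/ds=(1-a\sin^2\phi)^{1/2}$. With $\sn=\sin\phi$, $\cn=\cos\phi$ and $\dn=(1-a\sin^2\phi)^{1/2}$, this yields $\sn'=\cn\,\dn$, $\cn'=-\sn\,\dn$ and $\dn'=-a\,\sn\,\cn$, with initial values $(0,1,1)$.

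The relations $s_u^2+c_u^2=1$, $d_u^2+as_u^2=1$ and $\sn(\Fell(\phi\mid a)\mid a)=\sin\phi$ then hold by definition near zero. They persist globally because both sides of each relation are meromorphic continuations of the same germ.

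For the addition formulas I would use Euler's method. Fix $c=u+v$ and regard $u$ as the variable with $v=c-u$. Denote by $N(u)$ the right-hand side of the $\sn$ formula. Differentiating in $u$ with the three derivative rules and the two quadratic relations, I expect $dN/du=0$ after the numerator simplifies. Setting $u=0$ then gives $N=s_c$. The same constancy computation handles the $\cn$ and $\dn$ formulas; alternatively they follow from the $\sn$ formula, the quadratic relations, and a sign fixed by letting $v\to0$. This algebraic simplification is the one step I expect to be laborious. If it resists, I would fall back on verifying the identities as power series to the order needed near $(0,0)$ and invoking uniqueness of solutions of the ODE system.

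For Jacobi's imaginary transformation, define candidate functions of $u$ by
\begin{equation*}
S(u)=\ii\,\frac{\sn(-\ii u\mid 1-a)}{\cn(-\ii u\mid 1-a)},\qquad
C(u)=\frac{1}{\cn(-\ii u\mid 1-a)},\qquad
D(u)=\frac{\dn(-\ii u\mid 1-a)}{\cn(-\ii u\mid 1-a)}.
\end{equation*}
Using the derivative rules for parameter $1-a$, I will check that $(S,C,D)$ satisfies the parameter-$a$ system $S'=CD$, $C'=-SD$, $D'=-aSC$ with initial values $(0,1,1)$. The key substitution is $1-(1-a)=a$, which converts the $\dn$ relation for parameter $1-a$ into the one for parameter $a$. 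Uniqueness for this analytic ODE then gives \eqref{eq:imaginary-transform} at $u=\ii y$.

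For the periods, first note that $\sn(\Kell(a)\mid a)=1$, $\cn(\Kell(a)\mid a)=0$ and $\dn(\Kell(a)\mid a)=\sqrt{1-a}$, since $\phi=\pi/2$ at $s=\Kell(a)$. The addition formula then gives $\sn(u+\Kell)=\cn(u)/\dn(u)$, hence $\sn(u+2\Kell)=-\sn(u)$ and the period $4\Kell(a)$. For the imaginary direction, the same facts with parameter $1-a$ show that $y\mapsto \sn(\ii y\mid a)=\ii\,\sn(y\mid1-a)/\cn(y\mid1-a)$ is invariant under $y\mapsto y+2\Kell(1-a)$, because numerator and denominator both change sign. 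Thus $\sn(\ii y+2\ii\Kell(1-a)\mid a)=\sn(\ii y\mid a)$ on the imaginary axis, which is not enough for a period of a function on all of $\C$. The addition formula with $v=2\ii\Kell(1-a)$ settles this once one knows $\sn(v)=0$, $\cn(v)=-1$ and $\dn(v)=-1$, all read off from \eqref{eq:imaginary-transform}. It gives $\sn(u+v)=(s_uc_vd_v+s_vc_ud_u)/(1-as_u^2s_v^2)=s_u$ for every $u$. Finally, since $\sn^2$ is invariant under $u\mapsto u+2\Kell(a)$, it has periods $2\Kell(a)$ and $2\ii\Kell(1-a)$.
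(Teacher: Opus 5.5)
Your derivation is sound, and it takes a different route from the paper. The paper gives no argument of its own here and simply cites Whittaker--Watson. There $\sn,\cn,\dn$ are built as quotients of theta functions, so they are meromorphic on $\C$ by construction, and the inversion of $\Fell$ is a derived property. You instead work from the paper's own inverse-integral definition and reduce everything to the system $\sn'=\cn\,\dn$, $\cn'=-\sn\,\dn$, $\dn'=-a\,\sn\,\cn$ with data $(0,1,1)$. From there you use Euler's constancy argument, ODE uniqueness, and the addition formula.

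The steps you were unsure about do close:
\begin{itemize}
\item After using $s''=-(1+a)s+2as^3$, the Euler computation reduces to $(x-y)(1-axy)=x(1-y)(1-ay)-y(1-x)(1-ax)$ with $x=s_u^2$, $y=s_v^2$.
\item The candidate triple $(S,C,D)$ satisfies the $\dn$ equation because $\dn^2-(1-a)\cn^2=a$ in parameter $1-a$.
\item The values $(\sn,\cn,\dn)(2\ii\Kell(1-a)\mid a)=(0,-1,-1)$ give the imaginary period.
\end{itemize}
What your route buys is a self-contained proof tied to the definition the paper actually uses.

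The one ingredient you assume rather than prove is that the germs at $0$ continue to \emph{single-valued} meromorphic functions on all of $\C$. Every identity-theorem step, and the very notion of a period, rests on this. It is not automatic for a nonlinear system. The paper's phrase ``elsewhere by analytic continuation'' presupposes it, but you can supply it inside your own framework:
\begin{itemize}
\item Once the addition formulas hold on $\{|u|,|v|,|u+v|<r\}$, the duplication formulas $\sn(2w)=2s_wc_wd_w/(1-as_w^4)$, $\cn(2w)=(c_w^2-s_w^2d_w^2)/(1-as_w^4)$, $\dn(2w)=(d_w^2-as_w^2c_w^2)/(1-as_w^4)$ define meromorphic extensions to $|u|<2r$.
\item The identity theorem carries the system, the quadratic relations, and the addition formulas (on the convex set $\{|u|,|v|,|u+v|<2r\}$) over to the larger disk.
\item Iterating the doubling reaches all of $\C$.
\end{itemize}

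Once global meromorphy is in place, your comment that invariance along the imaginary axis is ``not enough'' is too pessimistic. The function $u\mapsto\sn(u+2\ii\Kell(1-a)\mid a)-\sn(u\mid a)$ is then meromorphic on $\C$ and vanishes on the imaginary axis, hence vanishes identically. Your addition-formula argument is also fine, just unnecessary.

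Finally, $\sn(\Fell(\phi\mid a)\mid a)=\sin\phi$ should be asserted on a neighbourhood of the real axis, where $\Fell(\cdot\mid a)$ is analytic, rather than ``globally''. Real $\phi$ is all the paper uses.
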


\subsection{Proof of the potential formula and level geometry}\label{app:pf:lem:geometry}

\begin{proof}[Proof of Lemma~\ref{lem:geometry}]
We first construct a conformal coordinate, then derive the real formula \eqref{eq:explicit-potential}, and finally verify the period, the level curve, the exponent, and the contour expression \eqref{eq:gK-contour}.

\proofstep{The conformal coordinate and harmonic measure.}
The map
\[
 T_\delta(z)=\frac{R_\delta-L+z}{R_\delta+L-z}
\]
sends $\Ddisk_\delta$ to the right half-plane and sends the slit $[\ell,L]$ to $[k,1]$. Hence $T_\delta(z)^2$ sends
$\Ddisk_\delta\setminus[\ell,L]$ to
\[
 \C\setminus\bigl((-\infty,0]\cup[k^2,1]\bigr).
\]
The standard inversion formulas for the Jacobi function give a local holomorphic coordinate $U_\delta$ characterized by
\begin{equation}\label{eq:U-coordinate}
 T_\delta(z)=k\sn(U_\delta(z)\mid k^2),
 \qquad
 0<\operatorname{Re}U_\delta(z)<\Kell(k^2),
\end{equation}
where different determinations differ by integer multiples of
$2\ii\Kell(1-k^2)$. One way to fix the normalization is to start on the real interval $L-R_\delta<z<\ell$, where $0<T_\delta(z)<k$, and set
\begin{equation}\label{eq:U-real-app}
 U_\delta(z)=
 \Fell\left(
 \arcsin\frac{T_\delta(z)}k
 \,\middle|\,k^2
 \right).
\end{equation}
The inverse and period statements in \eqref{eq:U-coordinate} are the Jacobi inversion and period formulas cited in Lemma~\ref{lem:elliptic-identities}.

The outer circle corresponds to $\operatorname{Re}U_\delta=0$, while the two sides of the slit correspond to
$\operatorname{Re}U_\delta=\Kell(k^2)$. Therefore
\begin{equation}\label{eq:u-via-U}
 1-\frac{\operatorname{Re}U_\delta(z)}{\Kell(k^2)}
\end{equation}
is single-valued and harmonic on the slit disk, extends continuously with value one on $\Gamma_\delta$ and zero on $[\ell,L]$, and is bounded between zero and one. By uniqueness for the Dirichlet problem, it equals $u_\delta$ in \eqref{eq:potential}. Equivalently, it is the harmonic measure of $\Gamma_\delta$ in $\Ddisk_\delta\setminus[\ell,L]$; see \citet[Sections~4.1 and~4.3]{ransford1995potential}.

At $z=0$, $0<T_\delta(0)<k$, so \eqref{eq:U-real-app} gives
\[
 U_\delta(0)=
 \Fell\left(
 \arcsin\frac{R_\delta-L}{k(R_\delta+L)}
 \,\middle|\,k^2
 \right),
\]
which is real.

\proofstep{Derivation of the pointwise real formula.}
Fix $z\in\Ddisk_\delta\setminus[\ell,L]$ and choose a determination
$U_\delta(z)=x+\ii y$ with
$
 0<x<\Kell(k^2),
 |y|<\Kell(1-k^2).
$
Put
$
 r=\sn^2(x\mid k^2),
 s=\sn^2(y\mid1-k^2).
$
The addition formulas \eqref{eq:elliptic-identities} and the imaginary transformation \eqref{eq:imaginary-transform}, applied to \eqref{eq:U-coordinate}, give
\begin{align*}
 &|T_\delta(z)|^2
 =\frac{k^2(r+(1-r)s)}{1-(1-k^2r)s}, \quad
 |k^2-T_\delta(z)^2|
 =\frac{k^2(1-r+(1-k^2)rs)}{1-(1-k^2r)s}, \\
 &|1-T_\delta(z)^2|
 =\frac{1-k^2r-(1-k^2)s}{1-(1-k^2r)s}.
\end{align*}
In particular, we have
\begin{equation}\label{eq:r-from-T}
 \frac{1+|T_\delta(z)|^2-|1-T_\delta(z)^2|}
 {k^2+|T_\delta(z)|^2+|k^2-T_\delta(z)^2|}
 =r.
\end{equation}
Since $\operatorname{Re}T_\delta(z)>0$ and
\[
 \bigl(1+|T_\delta(z)|^2\bigr)^2
 -|1-T_\delta(z)^2|^2
 =4\bigl(\operatorname{Re}T_\delta(z)\bigr)^2,
\]
rationalizing the numerator in \eqref{eq:r-from-T} shows that the square of the right-hand side of \eqref{eq:potential-amplitude} equals $r$. Hence
$
 b_\delta(z)=\sn(x\mid k^2) 
 $
 and
 $
 0<b_\delta(z)<1.
$
Real inversion in the first identity of \eqref{eq:elliptic-identities} yields
\begin{equation}\label{eq:Re-U}
 \operatorname{Re}U_\delta(z)
 =
 \Fell(\arcsin b_\delta(z)\mid k^2).
\end{equation}
Substitution of \eqref{eq:Re-U} into \eqref{eq:u-via-U} proves \eqref{eq:explicit-potential}. As $z$ approaches the slit, $x\to\Kell(k^2)$ and hence $b_\delta(z)\to1$; as $z$ approaches $\Gamma_\delta$, $x\to0$ and hence $b_\delta(z)\to0$. This proves the stated continuous boundary values, including the two endpoints by continuity of the inverse map.

\proofstep{The complex function $q_\delta$ and its period.}
Start with the real value of $U_\delta$ at zero in \eqref{eq:U-real-app} and continue it locally along paths in the slit disk. On every such local branch, \eqref{eq:u-via-U} gives the continued harmonic conjugate with $\widetilde u_\delta(0)=0$ as
$
 u_\delta(z)+\ii\widetilde u_\delta(z)
 =
 1-\frac{U_\delta(z)}{\Kell(k^2)}.
$
Therefore \eqref{eq:complex-potential} is equivalently
\begin{equation}\label{eq:q-via-U}
 q_\delta(z)
 =
 \frac{\pi\{\Kell(k^2)-U_\delta(z)\}}
 {\Kell(1-k^2)}.
\end{equation}
Changing $U_\delta$ by $2j\ii\Kell(1-k^2)$ changes $q_\delta$ by $-2j\pi\ii$. Hence the local determinations of $q_\delta$ agree modulo $2\pi\ii$, and its real part is the one stated in the lemma. Formula \eqref{eq:q0-explicit} follows from the preceding value of $U_\delta(0)$.

On the slit,
$\operatorname{Re}U_\delta=\Kell(k^2)$, so the two boundary values of $q_\delta$ are purely imaginary. Conjugation symmetry of \eqref{eq:U-coordinate} gives
\[
 q_{\delta,-}(x)
 =
 -q_{\delta,+}(x)
 \pmod{2\pi\ii},
 \qquad \ell<x<L.
\]

\proofstep{The level curve and the exponent.}
The equation $u_\delta(z)=\delta$ is equivalent to
\[
 \operatorname{Re}U_\delta(z)
 =
 (1-\delta)\Kell(k^2).
\]
Using \eqref{eq:U-coordinate} and solving the definition of $T_\delta$ for $z$, a full parametrization is
\begin{equation}\label{eq:inner-curve}
 z_\delta(\theta)
 =
 L-R_\delta
 \frac{
 1-k\sn\left(
 (1-\delta)\Kell(k^2)
 -\ii\theta\Kell(1-k^2)/\pi
 \,\middle|\,k^2
 \right)}
 {
 1+k\sn\left(
 (1-\delta)\Kell(k^2)
 -\ii\theta\Kell(1-k^2)/\pi
 \,\middle|\,k^2
 \right)},
 \qquad 0\le\theta<2\pi.
\end{equation}
The map $z\mapsto U_\delta(z)$ is conformal after identifying values that differ by $2\ii\Kell(1-k^2)$; hence \eqref{eq:inner-curve} is an analytic Jordan curve. The line
$\operatorname{Re}U_\delta=\Kell(k^2)$ maps to the slit. The inverse map in \eqref{eq:inner-curve} is analytic with bounded derivative when $\operatorname{Re}U$ lies in a sufficiently small fixed neighborhood of $\Kell(k^2)$ and $\operatorname{Im}U$ is taken modulo $2\Kell(1-k^2)$, uniformly under \eqref{eq:aspect-regime}. Comparing the two vertical lines therefore gives the Hausdorff distance
\[
 \operatorname{dist}_{\mathrm H}
 (\Lambda_\delta,[\ell,L])
 \le C\delta.
\]
Moreover, zero has real coordinate $U_\delta(0)$, and
$
 U_\delta(0)
 <
 (1-\delta)\Kell(k^2)
$,
exactly when $\delta<u_\delta(0)$. The latter inequality follows below for all sufficiently small $\delta$. Thus the bounded component of the complement of $\Lambda_\delta$ contains the slit but not zero.

Set $\tau=\gamma(1+\delta)^2$. Since
$R_\delta=L\sqrt\tau$ and $\ell=L/\kappa$,
\[
 k=
 \frac{\sqrt\tau-1+\kappa^{-1}}
 {\sqrt\tau+1-\kappa^{-1}},
 \qquad
 \frac{R_\delta-L}{k(R_\delta+L)}
 =
 \frac{(\sqrt\tau-1)/(\sqrt\tau+1)}k.
\]
Comparing \eqref{eq:explicit-potential} with the definition of $\alpha$ in Section~\ref{sec:setup} gives
\begin{equation}\label{eq:u0-alpha}
 u_\delta(0)
 =
 \alpha(\kappa,\tau)
 =
 \alpha\bigl(\kappa,\gamma(1+\delta)^2\bigr).
\end{equation}
On $\{z:\delta<u_\delta(z)<1\}$, the function
$(u_\delta-\delta)/(1-\delta)$ is harmonic, equals zero on $\Lambda_\delta$, and equals one on $\Gamma_\delta$. By the characterization of harmonic measure by the Dirichlet problem \citep[Section~4.3]{ransford1995potential}, this function is the harmonic measure of $\Gamma_\delta$ in that region. Evaluating at zero and using \eqref{eq:u0-alpha} proves the exact identity in \eqref{eq:alpha-delta}.

Under \eqref{eq:aspect-regime}, $\gamma$ is eventually bounded above and bounded away from one. For fixed $\kappa>1$, the arguments and parameters of the real elliptic integrals in \eqref{eq:u0-alpha} then remain in compact subsets of $(0,\pi/2)$ and $(0,1)$. Differentiation under the defining integral for $\Fell$ and continuity of $\Kell$ therefore give
\[
 u_\delta(0)
 =
 \alpha(\kappa,\gamma)+O(\delta),
\]
uniformly for all sufficiently large $n$. Substitution into the definition of $\alpha_\delta$ proves the last equality in \eqref{eq:alpha-delta}. Compactness also gives the three two-sided bounds stated in the lemma and, in particular,
$
 \delta<u_\delta(0)
$,
for all sufficiently small $\delta$. Letting $\delta\downarrow0$ in \eqref{eq:u0-alpha} gives
$
 u_0(0)=\alpha(\kappa,\gamma).
$
Thus \eqref{eq:explicit-potential} gives the harmonic measure at zero asserted in the main text.

\proofstep{The contour formula.}
For
$
 \mu(t)=L+R_\delta e^{\ii t},
$,
we have
$
 T_\delta(\mu(t))
 =
 \ii\cot(t/2).
$,
For $t\notin2\pi\mathbb Z$, let
\[
 y(t)=
 \Fell\left(
 \arctan\frac{\cot(t/2)}k
 \,\middle|\,1-k^2
 \right),
\]
as in Section~\ref{sec:computation}. Since the angle in the last display lies in $(-\pi/2,\pi/2)$, the inverse identity and \eqref{eq:imaginary-transform} give
$
 k\sn(\ii y(t)\mid k^2)
 =
 \ii\cot(t/2).
$
Thus one determination on the outer circle is
$
 U_\delta(\mu(t))
 =
 \ii y(t),
$
and \eqref{eq:q-via-U} gives
\[
 q_\delta(\mu(t))
 =
 \frac{
 \pi\{\Kell(k^2)-\ii y(t)\}}
 {\Kell(1-k^2)}
 \pmod{2\pi\ii}.
\]
As $t$ increases counterclockwise from $0$ to $2\pi$, $y(t)$ decreases from $\Kell(1-k^2)$ to $-\Kell(1-k^2)$. Hence $q_\delta$ increases by $2\pi\ii$, fixing the sign of the period asserted above. Finally, we have
\[
 q_\delta(0)
 =
 \frac{\pi\Kell(k^2)}
 {\Kell(1-k^2)}
 u_\delta(0),
\]
so substitution into \eqref{eq:gK} gives exactly \eqref{eq:gK-contour}. At $t\in2\pi\mathbb Z$, the two one-sided values of $q_\delta$ differ by $2\pi\ii$; because $K$ is an integer, they give the same value of $g_K$.

The argument used only the inequalities $R_\delta>L>\ell>0$ until the model-specific calculation \eqref{eq:u0-alpha}. Therefore the coordinate, period, and formula \eqref{eq:q0-explicit} remain valid when $R_\delta,L,\ell$ are replaced by any positive numbers $R,L,\ell$ with $R>L>\ell$. This is the form used for the analogous potential in Appendix~\ref{app:lower}.
\end{proof}

\section{Supporting analytic and probabilistic estimates}\label{app:auxiliary}
\subsection{Known covariance, approximation, and smooth calculus}
For the known-covariance estimator, put $w=\Sigma^{-1}v$. Then $\E[w^\top X_iY_i]=\beta_v$ and $\norm w_2\le\ell^{-1}$. Independence, the fourth-moment bound for standardized linear combinations of independent entries, and bounded $\norm\Sigma_\op$ give $\E(w^\top X_i)^2(\beta^\top X_i)^2\le C$. The error term contributes $\sigma^2w^\top\Sigma w\le C$. Thus averaging gives mean squared error at most $C/n$. A pair $\beta=\pm c v/\sqrt n$ at a fixed Gaussian covariance and noise variance gives its matching root-$n$ testing scale by the elementary likelihood calculation. This comparison does not assume an estimable full precision matrix when that matrix is unknown.

\begin{lemma}[Analytic polynomial approximation; {\citealp[Theorem~8.2]{Trefethen2013}}]\label{lem:analytic-approximation}
Let $\rho_0>1$ and let $E_{\rho_0}$ be the filled ellipse with boundary $\{(w+w^{-1})/2:|w|=\rho_0\}$. If $f$ is analytic on a neighborhood of $E_{\rho_0}$ and $\sup_{E_{\rho_0}}|f|\le M_0$, its degree-$M$ truncated Chebyshev expansion $p_M$ satisfies
\[
 \sup_{x\in[-1,1]}|f(x)-p_M(x)|
 \le\frac{2M_0\rho_0^{-M}}{\rho_0-1}.
\]
\end{lemma}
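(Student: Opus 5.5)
This lemma is a citation of a classical result (Trefethen, Theorem~8.2). I would reprove it through the Chebyshev coefficient bound. Write $x=(w+w^{-1})/2$ with $|w|=1$, so that $x=\cos\theta$ and $T_j(x)=(w^j+w^{-j})/2$. The function $F(w)=f((w+w^{-1})/2)$ is analytic on a neighborhood of the closed annulus $\rho_0^{-1}\le|w|\le\rho_0$. This holds because the Joukowski map sends the annulus onto $E_{\rho_0}$, with each circle $|w|=r$ and $|w|=r^{-1}$ sent to the same confocal ellipse. Moreover $F(w)=F(w^{-1})$.

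The Chebyshev coefficients are $a_j=\frac{2}{\pi}\int_0^\pi f(\cos\theta)\cos(j\theta)\,d\theta$ for $j\ge1$, with half that weight for $j=0$. In the $w$-variable this reads
\[
 a_j=\frac1{2\pi\ii}\oint_{|w|=1}\bigl(w^{j}+w^{-j}\bigr)F(w)\frac{dw}{w}
 =\frac{2}{2\pi\ii}\oint_{|w|=1}w^{-j-1}F(w)\,dw,
\]
where the second equality uses the symmetry $F(w)=F(w^{-1})$.

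Next deform the contour to $|w|=\rho$ for any $\rho<\rho_0$ for which $F$ stays analytic. The length and modulus bounds then give $|a_j|\le 2\rho^{-j}\sup_{|w|=\rho}|F|\le 2M_0\rho^{-j}$. The supremum is controlled by $M_0$, since the image of $|w|=\rho$ lies in $E_{\rho_0}$. Letting $\rho\uparrow\rho_0$, which is allowed because $f$ is analytic on a neighborhood of $E_{\rho_0}$, yields $|a_j|\le2M_0\rho_0^{-j}$.

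The Chebyshev series converges uniformly on $[-1,1]$, because its coefficients decay geometrically and $|T_j|\le1$ there. Its sum equals $f$ by uniqueness of the Fourier cosine series of the continuous function $f(\cos\theta)$. Hence
\[
 \sup_{[-1,1]}|f-p_M|\le\sum_{j>M}|a_j|\le 2M_0\sum_{j\ge M+1}\rho_0^{-j}
 =\frac{2M_0\rho_0^{-M}}{\rho_0-1}.
\]

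The only delicate step is the contour deformation. It needs analyticity of $F$ on the annulus, and the bound on $F$ there via $E_{\rho_0}$. Both follow directly once one notes that the Joukowski map sends $|w|=r$ and $|w|=r^{-1}$ onto the same ellipse.

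In the paper's use of this lemma (proof of Lemma~\ref{lem:lower-measure}), an affine change of variables maps $[\kappa^{-1}+\upsilon,1]$ to $[-1,1]$. A fixed Bernstein ellipse avoiding the poles near the critical circle then supplies $\rho_0>1$ and $M_0$ uniformly.
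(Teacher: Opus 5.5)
Your proof is correct and is the standard argument behind the cited result (Trefethen's Theorems~8.1--8.2). The Joukowski substitution gives $a_j=\frac{1}{\pi\ii}\oint w^{-j-1}F(w)\,dw$, contour deformation gives $|a_j|\le 2M_0\rho_0^{-j}$, and summing the geometric tail with $|T_j|\le1$ yields the bound; the paper itself supplies no proof beyond the citation, and since $f$ is analytic on a neighborhood of the closed ellipse you can take $\rho=\rho_0$ directly instead of passing to a limit.
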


\begin{lemma}[Smooth functional calculus from a resolvent comparison]\label{lem:smooth-calculus}
Suppose the difference of two expected self-adjoint resolvents has operator norm at most $C\epsilon|\operatorname{Im}z|^{-s}$ for a fixed integer $s\ge1$. Then every compactly supported smooth real function $f$ satisfies
\[
 \norm{\E f(A)-\E f(B)}_\op\le C_s\epsilon\norm f_{W^{s+1,1}}.
\]

\end{lemma}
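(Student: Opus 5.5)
The proof will use the Helffer--Sjöstrand functional calculus with an almost-analytic extension of order $s$, chosen so that its $\partial_{\bar z}$-derivative vanishes to order $|\operatorname{Im}z|^s$ at the real axis. This cancels the assumed resolvent singularity $|\operatorname{Im}z|^{-s}$. We assume the resolvent comparison holds on $0<|\operatorname{Im}z|\le1$, which is all the argument needs. Fix an even cutoff $\chi\in C^\infty_c(\R)$ with $\chi=1$ on $[-1/2,1/2]$ and $\supp\chi\subset[-1,1]$. For $z=x+\ii y$ define
\[
 \widetilde f(z)=\chi(y)\sum_{j=0}^{s}\frac{f^{(j)}(x)(\ii y)^j}{j!}.
\]
For self-adjoint $A$, the Helffer--Sjöstrand formula gives
\[
 f(A)=\frac1\pi\int_{\C}\partial_{\bar z}\widetilde f(z)\,(A-z)^{-1}\,dA(z).
\]
It holds because $\widetilde f$ is compactly supported, equals $f$ on $\R$, and the integrand is integrable. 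The last point is checked below.

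The first step is the telescoping computation of $\partial_{\bar z}=(\partial_x+\ii\partial_y)/2$ applied to $\widetilde f$. The terms coming from $\partial_x f^{(j)}$ and $\partial_y(\ii y)^j$ cancel pairwise. What remains is
\[
 \partial_{\bar z}\widetilde f(z)=\frac12\chi(y)\frac{f^{(s+1)}(x)(\ii y)^s}{s!}
 +\frac{\ii}{2}\chi'(y)\sum_{j=0}^{s}\frac{f^{(j)}(x)(\ii y)^j}{j!}.
\]
The first term is $O(|y|^s|f^{(s+1)}(x)|)$. The second is supported in $1/2\le|y|\le1$ and bounded by $C\sum_{j\le s}|f^{(j)}(x)|$.

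Next, subtract the formulas for $A$ and $B$ and take expectations. Fubini applies because $\norm{(A-z)^{-1}}_\op\le|y|^{-1}$ almost surely. Hence the integrand is bounded by $C|y|^{s-1}|f^{(s+1)}(x)|$ plus a bounded compactly supported term. This is integrable since $s\ge1$, and the same bound justifies the Helffer--Sjöstrand integral itself. Moving the expectation inside and applying the operator-norm triangle inequality gives
\[
 \norm{\E f(A)-\E f(B)}_\op\le\frac1\pi\int|\partial_{\bar z}\widetilde f(z)|\,\norm{\E(A-z)^{-1}-\E(B-z)^{-1}}_\op\,dA(z).
\]
By hypothesis the resolvent difference is at most $C\epsilon|y|^{-s}$. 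In the first term this factor cancels $|y|^s$, leaving $C\epsilon\int_{|y|\le1}\int_\R|f^{(s+1)}(x)|\,dx\,dy\le C\epsilon\norm{f^{(s+1)}}_{L^1}$. In the $\chi'$ term $|y|^{-s}\le2^s$, giving $C_s\epsilon\sum_{j\le s}\norm{f^{(j)}}_{L^1}$. Adding the two yields $C_s\epsilon\norm f_{W^{s+1,1}}$.

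The only delicate point is the choice of the extension order. It must be exactly $s$, so that the $\bar z$-derivative gains $|y|^s$ against the $|\operatorname{Im}z|^{-s}$ loss, while costing only $s+1$ derivatives of $f$ in $L^1$. The remaining steps, including the interchange of expectation and integral, are routine given the integrability check above.
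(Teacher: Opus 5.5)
Your proposal is correct and follows the paper's proof essentially step for step. Both use the order-$s$ almost-analytic extension $\widetilde f=\chi(y)\sum_{j\le s}f^{(j)}(x)(\ii y)^j/j!$ in the Helffer--Sj\"ostrand formula, cancel the $|y|^s$ factor in $\partial_{\bar z}\widetilde f$ against the assumed $|\operatorname{Im}z|^{-s}$ loss, and bound the $\chi'$ region away from the axis separately. Your explicit $\partial_{\bar z}$ computation and Fubini justification fill in details the paper leaves implicit, and your cutoff supported in $|y|\le1$ only requires the resolvent hypothesis on that strip.
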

\begin{remark}
The matrix-valued resolvent estimate of \citet[Theorem~2.8]{Bandeira2023} gives this bound with $s=5$.
The scalar-trace version is \citet[Lemma~5.11]{Bandeira2023}. We use its almost-analytic integral argument for the operator-norm comparison. The same integral representation is recorded in \citet[Lemma~3.8]{Yang2024}. 
\end{remark}
\begin{proof}
Choose a smooth compactly supported $\chi(y)$ equal to one on $|y|\le1$ and form
$\widetilde f(x+\ii y)=\chi(y)\sum_{j=0}^{s}f^{(j)}(x)(\ii y)^j/j!$.
Differentiation gives
$|\partial_{\bar z}\widetilde f(x+\ii y)|\le C_s|y|^s|f^{(s+1)}(x)|$ near $y=0$. On the support of $\chi'$, this derivative is bounded by $C_s\sum_{j=0}^s|f^{(j)}(x)|$. Cauchy's integral formula and Green's theorem, first for a scalar real spectral value and then integrated against the spectral measure, give
\[
 f(A)=-\frac1\pi\int_\C
       \partial_{\bar z}\widetilde f(z)(z-A)^{-1}\,dA(z).
\]
The scalar identity follows by deleting a small disk about the spectral value and letting its radius tend to zero. Subtract the two formulas and apply the assumed resolvent bound. The factor $|y|^s$ cancels its singularity near zero; the region where $\chi'\ne0$ is separated from zero. Integrating in $x$ and $y$ proves the bound.
\end{proof}

\begin{lemma}[Conditioning a mixing distribution]\label{lem:testing-tools}
      Let $\pi$ be a probability measure on a measurable parameter space
      $(\Theta,\mathcal T)$, and let $M$ be a Markov kernel from
      $(\Theta,\mathcal T)$ to a measurable observation space
      $(\mathsf X,\mathcal A)$. For $\mathcal C\in\mathcal T$ with
      $\pi(\mathcal C)>0$, define the mixture laws by
      \[
       (\pi M)(A)=\int_\Theta M(\theta,A)\,\pi(d\theta),
       \bigl(\pi(\cdot\mid\mathcal C)M\bigr)(A)
       =
       \frac{1}{\pi(\mathcal C)}
       \int_{\mathcal C}M(\theta,A)\,\pi(d\theta),
        A\in\mathcal A.
      \]
     We have
      $
       \operatorname{TV}\bigl(\pi(\cdot\mid\mathcal C)M,\pi M\bigr)
       \le \pi(\mathcal C^c)
       $. 
      \end{lemma}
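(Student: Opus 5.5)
The statement to be proved is Lemma~\ref{lem:testing-tools}, which is elementary. The plan is to write the difference of the two mixtures as an explicit signed measure and bound its total variation. Put $p=\pi(\mathcal C)$. For $A\in\mathcal A$,
\[
 \bigl(\pi(\cdot\mid\mathcal C)M\bigr)(A)-(\pi M)(A)
 =\Bigl(\frac1p-1\Bigr)\int_{\mathcal C}M(\theta,A)\,\pi(d\theta)
 -\int_{\mathcal C^c}M(\theta,A)\,\pi(d\theta).
\]
Both integrals are well defined because $\theta\mapsto M(\theta,A)$ is measurable, and both lie in $[0,\infty)$.

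The key observation is that the first term lies in $[0,(1/p-1)p]=[0,1-p]$, since $0\le M(\theta,A)\le1$. The second term lies in $[0,\pi(\mathcal C^c)]=[0,1-p]$. The difference of two numbers in $[0,1-p]$ has absolute value at most $1-p=\pi(\mathcal C^c)$.

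Taking the supremum over $A\in\mathcal A$ then gives the bound in the paper's convention $\TV(P,Q)=\sup_E|P(E)-Q(E)|$.

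The only point needing care is that no factor of two is lost. Splitting the difference crudely by the triangle inequality would give $2\pi(\mathcal C^c)$; the bound $\pi(\mathcal C^c)$ comes from the fact that the two terms have opposite signs. No earlier results from the paper are needed.
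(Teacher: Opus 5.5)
Your proof is correct and is essentially the paper's argument: your two terms are exactly $\pi(\mathcal C^c)\bigl(\pi(\cdot\mid\mathcal C)M\bigr)(A)$ and $\pi(\mathcal C^c)\bigl(\pi(\cdot\mid\mathcal C^c)M\bigr)(A)$. This is the paper's decomposition $\pi M=\pi(\mathcal C)\,\pi(\cdot\mid\mathcal C)M+\pi(\mathcal C^c)\,\pi(\cdot\mid\mathcal C^c)M$, followed by bounding the total variation between two probability laws by one; leaving the $\mathcal C^c$ part unnormalized, as you do, also handles the case $\pi(\mathcal C)=1$ without the separate treatment the paper uses.
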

      
      \begin{proof}
      If $\pi(\mathcal C)=1$, the two mixture laws coincide.
      Otherwise, both conditional mixing distributions are well defined,
      and splitting the integral over $\mathcal C$ and $\mathcal C^c$ gives
      \[
       \pi M
       =
       \pi(\mathcal C)\,\pi(\cdot\mid\mathcal C)M
       +
       \pi(\mathcal C^c)\,\pi(\cdot\mid\mathcal C^c)M.
      \]
      Subtracting this identity from $\pi(\cdot\mid\mathcal C)M$ and
      taking the supremum over measurable observation events yields
      \begin{align*}
       \operatorname{TV}\bigl(\pi(\cdot\mid\mathcal C)M,\pi M\bigr)
       &=
       \pi(\mathcal C^c)\,
       \operatorname{TV}\bigl(
       \pi(\cdot\mid\mathcal C)M,
       \pi(\cdot\mid\mathcal C^c)M
       \bigr) \le \pi(\mathcal C^c),
      \end{align*}
      because the total variation distance between two probability
      measures is at most one.
      \end{proof}

\begin{lemma}[Matrix-volume integral]\label{lem:matrix-volume}
For $t>0$, integrate over real symmetric positive definite $s\times s$ matrices with Lebesgue measure $d\mathsf Q=\prod_{1\le i\le j\le s}d\mathsf Q_{ij}$. Then
\begin{align*}
 &\int_{\mathsf Q\succ0}\exp\bigl(-t\tr(\mathsf Q-I_s-\log\mathsf Q)\bigr)\,d\mathsf Q
 =e^{ts}t^{-s(t+(s+1)/2)}\pi^{s(s-1)/4}
       \prod_{j=1}^s\Gamma\left(t+\frac{s+2-j}{2}\right).
\end{align*}
For $t\asymp d$ and $s=O(\log d)$, the absolute value of the logarithm of this integral is at most $Cs^2\log d$.
\end{lemma}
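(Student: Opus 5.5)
The plan is to diagonalize the integrand by orthogonal invariance and reduce it to a product of one-dimensional Gamma integrals. The integrand is $\det(\mathsf Q)^t e^{ts}e^{-t\tr\mathsf Q}$, since $\tr\log\mathsf Q=\log\det\mathsf Q$. The integral therefore equals $e^{ts}\int_{\mathsf Q\succ0}(\det\mathsf Q)^{t}e^{-t\tr\mathsf Q}\,d\mathsf Q$. This is the unnormalized Wishart (matrix Gamma) integral. I would evaluate it with the multivariate Gamma function identity
\[
\int_{\mathsf Q\succ0}(\det\mathsf Q)^{a-(s+1)/2}e^{-\tr(B\mathsf Q)}\,d\mathsf Q=\Gamma_s(a)\det(B)^{-a},\qquad \Gamma_s(a)=\pi^{s(s-1)/4}\prod_{j=1}^s\Gamma\Bigl(a-\frac{j-1}2\Bigr),
\]
with $B=tI_s$ and $a=t+(s+1)/2$.

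Substituting these values gives $\det(B)^{-a}=t^{-s(t+(s+1)/2)}$ and $\Gamma(a-(j-1)/2)=\Gamma(t+(s+2-j)/2)$. These match the displayed formula exactly. To keep the argument self-contained, I would derive $\Gamma_s$ by the Cholesky substitution $\mathsf Q=T^\top T$, with $T$ upper triangular and positive diagonal. This is the same QR/Cholesky Jacobian $2^s\prod_k T_{kk}^{s+1-k}$ already used in Lemma~\ref{lem:prior-concentration}. With it, the integral factorizes into $s(s-1)/2$ Gaussian integrals $\int e^{-tT_{ij}^2}dT_{ij}=\sqrt{\pi/t}$ and $s$ radial Gamma integrals in $T_{kk}$. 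Collecting the powers of $t$ is the only bookkeeping, and I expect it to be the main, if routine, obstacle: the exponents must combine to $-s(t+(s+1)/2)$.

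For the size estimate, take logarithms and apply Stirling's formula, $\log\Gamma(x)=(x-\tfrac12)\log x-x+O(1)$, to each factor with $x=t+(s+2-j)/2$, where $t\asymp d$ and $s=O(\log d)$. Writing $x=t+c_j$ with $c_j=O(s)$, we have $\log x=\log t+c_j/t+O(s^2/t^2)$. Hence
\[
\log\Gamma(x)=t\log t-t+(c_j-\tfrac12)\log t+O(s^2/t+1).
\]
Summing over $j$ gives $st\log t-st+\sum_j(c_j-\tfrac12)\log t+O(s)$. The prefactor contributes $ts-st\log t-s(s+1)/2\cdot\log t$, and $\pi^{s(s-1)/4}$ contributes $O(s^2)$. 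The $t\log t$ and $t$ terms cancel exactly. What remains is a sum of $O(s^2)\log t$, $O(s^2)$ and $O(s)$ terms. Since $\log t\asymp\log d$, the absolute value of the logarithm is at most $Cs^2\log d$.
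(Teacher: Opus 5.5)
Your proposal is correct and is essentially the paper's own proof: a Cholesky substitution with Jacobian $2^s\prod_k T_{kk}^{s+1-k}$ (your $\mathsf Q=T^\top T$ with $T$ upper triangular is the paper's $\mathsf Q=TT^\top$ with $T$ lower triangular) splits the integral into $s(s-1)/2$ Gaussian factors $\sqrt{\pi/t}$ and $s$ Gamma factors, and Stirling's formula then cancels the $st\log t$ and $st$ terms against the prefactor. Your exponent bookkeeping checks out: the logarithm of the integral equals $-\tfrac{s(s+1)}{4}\log t+\tfrac{s(s-1)}{4}\log\pi+O(s)$, which gives the stated $Cs^2\log d$ bound.
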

\begin{proof}
Write $\mathsf Q=TT^\top$ with $T$ lower triangular and positive diagonal. Differentiating its independent entries successively gives the Jacobian $2^s\prod_{j=1}^sT_{jj}^{s+1-j}$ for the stated measure. Each off-diagonal integral equals $\sqrt{\pi/t}$. Each diagonal integral, after the substitution $y=tT_{jj}^2$, equals
\[
 \frac12 t^{-(t+(s+2-j)/2)}
 \Gamma\left(t+\frac{s+2-j}{2}\right).
\]
Multiplying these factors, the Jacobian constant, and $e^{ts}$ proves the formula. In Stirling's formula the terms of orders $st\log t$ and $st$ cancel the displayed power and exponential. Each of the $s$ gamma arguments differs from $t$ by $O(s)$, so all remaining terms are bounded by $Cs^2\log d$. This proves the asserted estimate.
\end{proof}

\end{document}